\newcommand{\scal}[2]{\langle #1,#2\rangle}
\newcommand{\rr}[1]{\mathbf R^{#1}}
\newcommand{\rrstar}[1]{\mathbf R^{#1}_*}
\newcommand{\zz}[1]{\mathbf Z^{#1}}
\newcommand{\nm}[2]{\Vert #1\Vert _{#2}}
\newcommand{\op}{\operatorname{Op}}
\newcommand{\sets}[2]{\{ \, #1\, ;\, #2\, \} }
\newcommand{\ep}{\varepsilon}
\newcommand{\cdo}{\, \cdot \, }
\newcommand{\supp}{\operatorname{supp}}
\newcommand{\eabs}[1]{\langle #1\rangle}     %%%%%   for <x>
\newcommand{\vrum}{\vspace{0.1cm}}
\newcommand{\maclD}{\mathcal D}
\newcommand{\maclE}{\mathcal E}
\newcommand{\maclS}{\mathcal S}
\newcommand{\mascE}{\mathscr E}
\newcommand{\mascF}{\mathscr F}
\newcommand{\mascP}{\mathscr P}
\newcommand{\mabfp}{\mathbf p}
\newcommand{\mabfq}{\mathbf q}
\newcommand{\mabfr}{\mathbf r}
\newcommand{\sfW}{\mathsf{W}}
\newcommand{\Co}{\mathsf{Co}}
\numberwithin{equation}{section}          %Detta g?r att man f?r
\newtheorem{thm}{Theorem}
\numberwithin{thm}{section}
\newcommand{\rubrik}{}
\newtheorem{prop}[thm]{Proposition}
\newtheorem{cor}[thm]{Corollary}
\newtheorem{lemma}[thm]{Lemma}
\theoremstyle{definition}
\newtheorem{defn}[thm]{Definition}
\theoremstyle{remark}
\newtheorem{rem}[thm]{Remark}              %T o m hit  bara allm
\title{\textbf {Gabor analysis for a broad class of quasi-Banach
modulation spaces}}
\author{Joachim Toft}
\address{Department of Mathematics,
Linn{\ae}us University, V{\"a}xj{\"o}, Sweden}
\email{joachim.toft@lnu.se}
\begin{document}

\begin{abstract}
We extend the Gabor analysis in \cite{GaSa} to a broad class
of modulation spaces, allowing more general mixed quasi-norm
estimates and weights in the definition of the modulation space
quasi-norm. For such spaces we deduce invariance and
embedding properties, and that the elements admit reconstructible
sequence space representations using Gabor frames.
\end{abstract}

\maketitle

%%%%%%%%%%%%%%%%%%%%%%
\section{Introduction}\label{sec0}
%%%%%%%%%%%%%%%%%%%%%%

\par

A modulation space is, roughly speaking, a set
of distributions or ultra-distributions, obtained by imposing a
suitable quasi-norm estimate on the short-time
Fourier transforms of the involved distributions.
(See Sections \ref{sec1} and \ref{sec2} for definitions.)

\par

In \cite{GaSa}, Galperin and Samarah establish fundamental continuity
and invariance properties for modulation spaces of the form
$M^{p,q}_{(\omega )}$, when $\omega$ is a polynomially moderate weight
and $p,q\in (0,\infty ]$. More precisely, Galpering and Samarah
prove in \cite{GaSa} among others that the following fundamental
properties for such modulation spaces hold true:
\begin{itemize}
\item[(1)] $M^{p,q}_{(\omega )}$ is independent of the choice of involved window 
function in the short-time Fourier transforms;

\vrum

\item[(2)] $M^{p,q}_{(\omega )}$ increases with respect to the parameters
$p$ and $q$, and decreases with respect to $\omega$;

\vrum

\item[(3)] $M^{p,q}_{(\omega )}$ admit reconstructible sequence space
representations using Gabor frames.
\end{itemize}

\par

Note that in contrast to what is usual in modulation spaces theory, the
Lebesgue exponents $p$ and $q$ above are allowed to be
strictly smaller than $1$. This leads to a more comprehensive and difficult
analysis of $M^{p,q}_{(\omega )}$ when $p$ and $q$ are allowed to
stay in $(0,\infty ]$, compared to what is needed when $p$ and $q$
stays in the smaller interval $[1,\infty ]$. In fact, the theory of classical
modulation spaces was established and further developed in
\cite{Fei1,Fei3,Fei5,FG1,FG2,Gc2} by
Feichtinger and Gr{\"o}chenig. In these investigations, Feichtinger and
Gr{\"o}chenig only considered modulation spaces $M^{p,q}_{(\omega )}$
with $p,q\in [1,\infty ]$, and the analysis for deducing the properties
(1)--(3) above is less comprehensive and less difficult compared to the
analysis in \cite{GaSa}.

\par

We also remark that the results in \cite{GaSa} have impact
on unifications of the modulation space theories in
\cite{BorNiel1,BorNiel2,WaHu,Toft8,Toft11}. In fact, in
\cite{BorNiel1,BorNiel2,WaHu,Toft8,Toft11}, certain restrictions are imposed on
the window functions in the definitions of the modulation space quasi-norms.
Due to (1) above, $M^{p,q}_{(\omega )}$ is equal to the corresponding spaces in
\cite{BorNiel1,BorNiel2,WaHu,Toft8,Toft11},
provided the weight $\omega$ and the exponents $p$ and $q$ agree with
those in \cite{BorNiel1,BorNiel2,WaHu,Toft8,Toft11}.

\medspace

The aim of the paper is to the deduce general properties for a broad family of
modulation spaces, which contains the modulation spaces $M^{p,q}_{(\omega )}$
when $p,q\in (0,\infty ]$ and $\omega$ is an \emph{arbitrary} moderate weight. In
particular, the assumption that $\omega$ should be polynomially moderate
is relaxed.
More precisely, we use the framework in \cite{GaSa} and show that (1)--(3) above
still holds for this extended family of modulation spaces. If
the weights are not polynomially moderate, then the involved modulation
spaces do not stay between the Schwartz space $\mathscr S$ and its
dual space $\mathscr S'$. In this situation, the theory is formulated in
the framework of the Gelfand-Shilov space $\Sigma _1$ and its dual
space $\Sigma _1'$ of Gelfand-Shilov ultra-distributions.
Furthermore we allow more general mixed quasi-norm estimates on the
short-time Fourier transform, in the definitions of modulation space
quasi-norms. (See Proposition \ref{AnSynthOpProp} and
Theorem \ref{ModGabThm}.)

\par

In the end of Section \ref{sec3} we use these results to establish identification
properties for compactly supported elements in modulation and
Fourier Lebesgue spaces. In particular we extend the assertions in Remark
4.6 in \cite{RSTT} to more general weights and Lebesgue exponents.
(See Proposition \ref{CompSuppInv}.)

\par

The \emph{classical} modulation spaces $M^{p,q}_{(\omega )}$,
$p,q \in [1,\infty]$ and $\omega$ polynomially moderate
weight on the phase (or time-frequency shift) space, were
introduced by Feichtinger in \cite{Fei1}.
From the definition it follows that $\omega$, and to some
extent the parameters $p$ and $q$
quantify the degrees of asymptotic decay and singularity of
the distributions in $M^{p,q}_{(\omega )}$. The theory of modulation
spaces was developed further and generalized in several ways, e.{\,}g.
in \cite{Fei3,Fei5,FG1,FG2,FGW,Gc1,Gc2}, where
among others, Feichtinger and Gr{\"o}chenig established the theory of
coorbit spaces.

\par

From the construction of modulation spaces
spaces, it turns out that these spaces and Besov spaces in some
sense are rather similar, and sharp embeddings can be found
in \cite{Toft2}, which
are improvements of certain embeddings in \cite {Grobner}. (See
also \cite {SugTom1,WaHu} for verification of the sharpness, and
\cite{Grobner,ToWa,HaWa} for further generalizations in terms of
$\alpha$-modulation spaces.)

\par

During the last 15 years many results appeared which confirm
the usefulness of the modulation spaces. For
example, in \cite{FG1,Gc1,Gc2}, it
is shown that all modulation spaces admit
reconstructible sequence space representations using Gabor frames.
Important reasons for such links are that
$M^{p,q}_{(\omega )}$ may in straight-forward ways be considered
within the coorbit space theory.

\par

More broad families of modulation spaces have been considered since
\cite{Fei1}. For example, in \cite{Fei5}, Feichtinger considers general classes
of modulation spaces, defined by replacing the $L^{p,q}_{(\omega )}$
norm estimates of the short-time Fourier transforms, by more general norm estimates.
Furthermore, in \cite{PT1,PT2,Toft8,Toft11}, the conditions on involved
weight functions are relaxed, and modulation spaces are considered in
the framework of the theory of Gelfand-Shilov distributions. In this setting,
the family of modulation spaces are broad compared to \cite{Fei1,GaSa}.
For example, in contrast to \cite{Fei1,GaSa}, we may have $\mathscr
S' \subseteq M^{p,q}_{(\omega )}$, or $M^{p,q}_{(\omega )}\subseteq
\mathscr S$, for some choices of $\omega$ in \cite{PT1,PT2,Toft8,Toft11}.
Some steps in this direction can be found already in e.{\,}g. \cite{Gc1,Gc2}.

\par

Finally we remarks that in \cite{Rau1,Rau2}, Rauhut extends essential parts of
the coorbit space theory in \cite{FG1,Gc1} to the
case of quasi-Banach spaces. Here it is also shown that
modulation spaces of quasi-Banach types in \cite{GaSa} fit well
in this theory, and we remark that the results in Sections \ref{sec2} and
\ref{sec3} show that our extended family of modulation spaces also meets the
coorbit space theory in \cite{Rau2} well.

\par

%%%%%%%%%%%%%%%%%%%%%%
\section{Preliminaries}\label{sec1}
%%%%%%%%%%%%%%%%%%%%%%

\par

In this section we explain some results available in the literature,
which are needed later on, or clarify the subject. The proofs are
in general omitted. Especially we recall some facts about weight
functions, Gelfand-Shilov spaces, and modulation spaces.

\par

\subsection{Weight functions}

\par

We start by discussing general properties on the involved weight
functions. A \emph{weight} on $\rr d$ is a positive function $\omega
\in  L^\infty _{loc}(\rr d)$, and for each compact set $K\subseteq
\rr d$, there is a constant $c>0$ such that
$$
\omega (x)\ge c\qquad \text{when}\qquad x\in K.
$$
A usual condition on $\omega$ is that it should be \emph{moderate},
or \emph{$v$-moderate} for some positive function $v \in
 L^\infty _{loc}(\rr d)$. This means that
\begin{equation}\label{moderate}
\omega (x+y) \le C\omega (x)v(y),\qquad x,y\in \rr d.
\end{equation}
for some constant $C$ which is independent of $x,y\in \rr d$.
We note that \eqref{moderate} implies that $\omega$ fulfills
the estimates
\begin{equation}\label{moderateconseq}
C^{-1}v(-x)^{-1}\le \omega (x)\le Cv(x),\quad x\in \rr d.
\end{equation}
We let $\mascP _E(\rr d)$ be the set of all moderate weights on $\rr d$.
Furthermore, if $v$ in \eqref{moderate} can be chosen as a polynomial,
then $\omega$ is called \emph{polynomially moderate}, or a weight of
\emph{polynomial type}. We let
$\mascP (\rr d)$ be the set of all weights of polynomial type.

\par

It can be proved that if $\omega \in \mascP _E(\rr d)$, then
$\omega$ is $v$-moderate for some $v(x) = e^{r|x|}$, provided the
positive constant $r$ is large enough. In particular,
\eqref{moderateconseq} shows that for any $\omega \in \mascP
_E(\rr d)$, there is a constant $r>0$ such that
$$
e^{-r|x|}\lesssim \omega (x)\lesssim e^{r|x|},\quad x\in \rr d.
$$
Here $A\lesssim B$ means that $A\le cB$
for a suitable constant $c>0$. 

\par

We say that $v$ is
\emph{submultiplicative} if $v$ is even and \eqref{moderate}
holds with $\omega =v$. In the sequel, $v$ and $v_j$ for
$j\ge 0$, always stand for submultiplicative weights if
nothing else is stated.

\par

\subsection{Gelfand-Shilov spaces}

\par

Next we recall the definition of Gelfand-Shilov spaces.

\par

Let $0<h,s,t\in \mathbf R$ be fixed. Then we let $\mathcal S_{t,h}^s(\rr d)$
be the set of all $f\in C^\infty (\rr d)$ such that
\begin{equation*}%\label{gfseminorm}
\nm f{\mathcal S_{t,h}^s}\equiv \sup \frac {|x^\beta \partial ^\alpha
f(x)|}{h^{|\alpha | + |\beta |}\alpha !^s\, \beta !^t}
\end{equation*}
is finite. Here the supremum should be taken over all $\alpha ,\beta \in
\mathbf N^d$ and $x\in \rr d$. For conveniency we set
$\mathcal S_{s,h}=\mathcal S_{s,h}^s$.

\par

Obviously $\mathcal S_{t,h}^s\subseteq
\mathscr S$ is a Banach space which increases with $h$, $s$ and $t$.
Furthermore, if $s,t>1/2$, or $s,t =1/2$ and $h$ is sufficiently large, then $\mathcal
S_{t,h}^s$ contains all finite linear combinations of Hermite functions.
Since such linear combinations are dense in $\mathscr S$, it follows
that the dual $(\mathcal S_{t,h}^s)'(\rr d)$ of $\mathcal S_{t,h}^s(\rr d)$ is
a Banach space which contains $\mathscr S'(\rr d)$.

\par

The \emph{Gelfand-Shilov spaces} $\mathcal S_{t}^s(\rr d)$ and
$\Sigma _{t}^s(\rr d)$ are the inductive and projective limits respectively
of $\mathcal S_{t,h}^s(\rr d)$ with respect to $h$. This implies that
\begin{equation}\label{GSspacecond1}
\mathcal S_{t}^s(\rr d) = \bigcup _{h>0}\mathcal S_{t,h}^s(\rr d)
\quad \text{and}\quad \Sigma _{t}^s(\rr d) =\bigcap _{h>0}\mathcal
S_{t,h}^s(\rr d),
\end{equation}
and that the topology for $\mathcal S_{t}^s(\rr d)$ is the
strongest possible one such that each inclusion map
from $\mathcal S_{t,h}^s(\rr d)$ to $\mathcal S_{t}^s(\rr d)$
is continuous. The space $\Sigma _t^s(\rr d)$ is a Fr{\'e}chet
space with semi norms $\nm \cdo{\mathcal S_{s,h}^t}$, $h>0$.
Moreover, $\mathcal S _t^s(\rr d)\neq \{ 0\}$, if and only if
$s,t>0$ satisfy $s+t\ge 1$, and $\Sigma _t^s(\rr d)\neq \{ 0\}$,
if and only if satisfy $s+t\ge 1$ and $(s,t)\neq (1/2,1/2)$.

\par

For convenience we set $\mathcal S_{s}=\mathcal S_{s}^s$ and
$\Sigma _{s}=\Sigma _{s}^s$, and remark that $\mathcal S_s(\rr d)$
is zero when $s<1/2$, and that $\Sigma _s(\rr d)$ is zero
when $s\le 1/2$. For each $\ep >0$ and $s,t>0$ such that $s+t\ge 1$,
we have
$$
\Sigma _t^s (\rr d)\subseteq \mathcal S_t^s(\rr d)\subseteq
\Sigma _{t+\ep}^{s+\ep}(\rr d).
$$
On the other hand, in \cite{Pil} there is an alternative elegant definition of
$\Sigma _{s_1}(\rr d)$ and $\mathcal S _{s_2}(\rr d)$ such that these spaces
agrees with the definitions above when $s_1>1/2$ and $s_2\ge 1/2$, but
$\Sigma _{1/2}(\rr d)$ is non-trivial and contained in $\mathcal S_{1/2}(\rr d)$.

\par

From now on we assume that $s,t>1/2$ when considering
$\Sigma _t^s(\rr d)$.

\par

\medspace

The \emph{Gelfand-Shilov distribution spaces} $(\mathcal S_{t}^s)'(\rr d)$
and $(\Sigma _{t}^s)'(\rr d)$ are the projective and inductive limit
respectively of $(\mathcal S_{t,h}^s)'(\rr d)$.  This means that
\begin{equation}\tag*{(\ref{GSspacecond1})$'$}
(\mathcal S_{t}^s)'(\rr d) = \bigcap _{h>0}(\mathcal
S_{t,h}^s)'(\rr d)\quad \text{and}\quad (\Sigma _{t}^s)'(\rr d)
=\bigcup _{h>0} (\mathcal S_{t,h}^s)'(\rr d).
\end{equation}
We remark that already in \cite{GS} it is proved that
$(\mathcal S_{t}^s)'(\rr d)$ is the dual of $\mathcal
S_{t}^s(\rr d)$, and if $s>1/2$, then $(\Sigma _{t}^s)'
(\rr d)$ is the dual of $\Sigma _{t}^s(\rr d)$ (also in
topological sense).

\par

The Gelfand-Shilov spaces are invariant or posses convenient
mapping properties under several basic
transformations. For example they are invariant under
translations, dilations, tensor product, and to some extent under
Fourier transformation. Here tensor products of elements in
Gelfand-Shilov distribution spaces are defined
in similar ways as for tensor products for distributions (cf. Chapter
V in \cite{Ho1}). If $s,s_0,t,t_0>0$ satisfy
$$
s_0+t_0\ge 1,\quad s\ge s_0\quad \text{and},\quad t\ge t_0,
$$
and $f,g\in (\mathcal S_{t_0}^{s_0})'(\rr d)\setminus 0$ and 
then $f\otimes g \in (\mathcal S_t^{s})'(\rr {2d})$, if and
only if $f,g\in (\mathcal S_t^{s})'(\rr d)$. Similar facts hold for any
other choice of Gelfand-Shilov spaces of functions or distributions.

\par

From now on we let $\mathscr F$ be the Fourier transform which
takes the form
$$
(\mathscr Ff)(\xi )= \widehat f(\xi ) \equiv (2\pi )^{-d/2}\int _{\rr
{d}} f(x)e^{-i\scal  x\xi }\, dx
$$
when $f\in L^1(\rr d)$. Here $\scal \cdo \cdo$ denotes the
usual scalar product on $\rr d$. The map $\mathscr F$ extends 
uniquely to homeomorphisms on $\mathscr S'(\rr d)$, $\mathcal
S_s'(\rr d)$ and $\Sigma _s'(\rr d)$, and restricts to 
homeomorphisms on $\mathscr S(\rr d)$, $\mathcal S_s(\rr d)$
and $\Sigma _s(\rr d)$, and to a unitary operator on $L^2(\rr d)$.
More generally, $\mathscr F$ extends  uniquely to
homeomorphisms from $(\mathcal S_t^s)'(\rr d)$ and $(\Sigma
_t^s)'(\rr d)$ to $(\mathcal S_s^t)'(\rr d)$ and $(\Sigma _s^t)'(\rr d)$
respectively, and restricts to homeomorphisms from $\mathcal
S_t^s(\rr d)$ and $\Sigma _t^s(\rr d)$ to $\mathcal S_s^t(\rr d)$
and $\Sigma _s^t(\rr d)$ respectively.

\par

The following lemma shows that functions in Gelfand-Shilov spaces can
be characterized by estimates on the functions and their Fourier transform
of the form
\begin{equation}\label{GSexpcond}
|f(x)|\lesssim e^{-\ep |x|^{1/t}}\quad \text{and}\quad |\widehat f
(\xi )|\lesssim e^{-\ep |\xi |^{1/s}} .
\end{equation}
The proof is omitted, since the result can be found in e.{\,}g.
\cite{GS, ChuChuKim}.

\par

\begin{lemma}\label{GSFourierest}
Let $f\in \mathcal S'_{1/2}(\rr d)$, and let $s,t>0$. Then the following is true:
\begin{enumerate}
\item if $s+t\ge 1$, then $f\in \mathcal
S_t^s(\rr d)$, if and only if \eqref{GSexpcond} holds
for some $\ep >0$;

\vrum

\item  if $s+t\ge 1$ and $(s,t)\neq (1/2,1/2)$, then $f\in \Sigma _t^s(\rr d)$,
if and only if \eqref{GSexpcond} holds
for every $\ep >0$.
\end{enumerate}
\end{lemma}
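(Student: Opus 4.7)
The plan is to pass between the seminorm definition of $\mathcal S_t^s$ (and $\Sigma_t^s$) and the decay condition \eqref{GSexpcond} by combining Fourier inversion with Stirling's formula, exploiting the fact that $N!^\sigma\sim(N/e)^{\sigma N}$ converts factorial growth of Gevrey type into sub-exponential decay of rate $1/\sigma$.

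For the forward direction of (1), I would specialise the defining estimate $|x^\beta\partial^\alpha f(x)|\le C h^{|\alpha|+|\beta|}\alpha!^s\beta!^t$ to $\alpha=0$. Using $|x|^N\lesssim d^{N/2}\max_{|\beta|=N}|x^\beta|$, this yields $|x|^N|f(x)|\lesssim h_1^N N!^t$, and minimising the right-hand side over $N\approx(|x|/h_1)^{1/t}$ via Stirling produces $|f(x)|\lesssim e^{-\ep|x|^{1/t}}$ with $\ep$ depending only on $t, d, h_1$. Applying the same argument to $\widehat f\in\mathcal S_s^t(\rr d)$ — which lies there by the Fourier intertwining $\mathscr F\colon\mathcal S_t^s\to\mathcal S_s^t$ noted in the preliminaries — furnishes the matching estimate on $|\widehat f(\xi)|$.

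For the backward direction of (1), starting from \eqref{GSexpcond}, I use the Fourier identity $|\mathscr F(x^\beta\partial^\alpha f)(\xi)|=|\partial_\xi^\beta(\xi^\alpha\widehat f(\xi))|$ together with $\|g\|_\infty\le(2\pi)^{-d/2}\|\widehat g\|_{L^1}$ to reduce the task to an $L^1$-bound on $\partial^\beta(\xi^\alpha\widehat f)$. Leibniz produces a sum of terms $\xi^{\alpha-\gamma}\partial^{\beta-\gamma}\widehat f$, and rewriting $\partial^{\beta-\gamma}\widehat f=\mathscr F((-ix)^{\beta-\gamma}f)$ transfers the $\beta$-derivative onto $f$. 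Splitting each exponential as $e^{-\ep|\cdot|}=e^{-\ep/2}\cdot e^{-\ep/2}$, one half powers the Stirling optimisations
\[
\sup_x |x|^M e^{-(\ep/2)|x|^{1/t}}\lesssim A^M(M!)^t,\qquad \sup_\xi |\xi|^N e^{-(\ep/2)|\xi|^{1/s}}\lesssim A^N(N!)^s,
\]
and the other half keeps the remaining integral absolutely convergent. Collecting combinatorial factors via $\binom{\beta}{\gamma}\le 2^{|\beta|}$, $\alpha!/(\alpha-\gamma)!\le\alpha!$ and $|\mu|!\le d^{|\mu|}\mu!$ then assembles the required bound $\lesssim h^{|\alpha|+|\beta|}\alpha!^s\beta!^t$ after a single enlargement of $h$. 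For (2) the same chain runs with ``some $\ep$'' replaced by ``every $\ep$'': the forward direction yields \eqref{GSexpcond} for every $\ep$ since $\Sigma_t^s$-membership requires finiteness of the seminorms for every $h$, and the backward direction yields the seminorm bound for every $h$ since \eqref{GSexpcond} for every $\ep$ lets one take $\ep$ as large as needed. The exclusion $(s,t)=(1/2,1/2)$ plays no role in the inequalities themselves and is needed only so that $\Sigma_t^s\ne\{0\}$.

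The main obstacle is that the two decay hypotheses sit on opposite sides of the Fourier transform, so neither alone provides the correct factorial control in the cross-variable: the bound on $f$ gives only a pointwise estimate on $\partial^{\beta-\gamma}\widehat f$ that is uniform in $\xi$, hence not integrable when multiplied by $|\xi|^{|\alpha-\gamma|}$, while the bound on $\widehat f$ supplies no decay in $x$ of $\partial^\alpha f$. The symmetric use of both, via the Fourier identity and iterated Leibniz expansions, mixes them correctly, but the constant $h$ degrades at each inequality step; the delicate part is organising the estimates so that this degradation is absorbed into a single final enlargement of $h$ independent of $\alpha$ and $\beta$. The full combinatorial bookkeeping is carried out in \cite{GS,ChuChuKim}.
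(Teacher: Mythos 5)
The paper does not prove this lemma at all --- it is stated with the remark that ``the proof is omitted, since the result can be found in e.{\,}g. \cite{GS, ChuChuKim}'' --- so your sketch can only be measured against the arguments in those references. Your forward direction is correct and standard: specialising the seminorms to $\alpha =0$, optimising $h_1^N N!^t|x|^{-N}$ over $N$ via Stirling, and repeating for $\widehat f\in \mathcal S_s^t$ is exactly how one obtains \eqref{GSexpcond}, and the quantifier bookkeeping for $\Sigma _t^s$ versus $\mathcal S_t^s$ is right.

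The backward direction, however, has a genuine gap, and it sits precisely at the point you yourself flag as ``the main obstacle.'' After Leibniz you must bound $\int |\xi ^{\alpha -\gamma}|\,|\partial ^{\beta -\gamma}\widehat f(\xi )|\, d\xi$, but the only estimate the hypothesis \eqref{GSexpcond} gives on $\partial ^{\beta -\gamma}\widehat f=\mathscr F((-ix)^{\beta -\gamma}f)$ is the uniform bound $\nm {x^{\beta -\gamma}f}{L^1}\lesssim A^{|\beta |}\beta !^t$, with no decay in $\xi$; the integral therefore diverges, and your proposed fix (``the symmetric use of both \dots mixes them correctly'') is an assertion, not an argument --- there is no way to re-expand $\partial ^{\beta-\gamma}\widehat f$ so that the decay of $\widehat f$ reappears without already knowing mixed bounds on $\widehat f$, which is circular. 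What \cite{GS,ChuChuKim} actually do is supply a separate, non-Fourier ingredient: from \eqref{GSexpcond} one first derives the two \emph{decoupled} families of estimates $\sup _x|x^\beta f(x)|\le Ch^{|\beta |}\beta !^t$ and $\sup _x|\partial ^\alpha f(x)|\le Ch^{|\alpha |}\alpha !^s$ (the latter by Fourier inversion, which is where your Stirling optimisation legitimately applies), and then invoke the theorem that these separate conditions already imply the mixed condition $\sup _x|x^\beta \partial ^\alpha f(x)|\le Ch^{|\alpha |+|\beta |}\alpha !^s\beta !^t$. That decoupling theorem is proved by Taylor expansion together with intermediate-derivative (Kolmogorov--Landau/Gorny type) inequalities, and it is there --- not in any ``combinatorial bookkeeping'' --- that the hypothesis $s+t\ge 1$ does real work, a point your sketch never touches. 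As written, your argument would not close; the missing idea is this decoupling step.
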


\par

The estimates \eqref{GSexpcond} are equivalent to
$$
|f(x)|\le C e^{-\ep |x|^{1/t}}\quad \text{and}\quad |\widehat f
(\xi )|\le Ce^{-\ep |\xi |^{1/s}} .
$$
In (2) in Lemma \ref{GSFourierest}, it is understood that
the (hidden) constant $C>0$ depends on $\ep >0$.

\medspace

Next we recall related characterizations of Gelfand-Shilov
spaces, in terms of short-time Fourier transforms.
(See Propositions \ref{stftGelfand2} and \ref{stftGelfand2dist}
below.)

\medspace

\par

Let $\phi \in \mathscr S(\rr d)\setminus 0$ be fixed. For every $f\in
\mathscr S'(\rr d)$, the \emph{short-time Fourier transform} $V_\phi
f$ is the distribution on $\rr {2d}$ defined by the formula
\begin{equation}\label{defstft}
(V_\phi f)(x,\xi ) =\mathscr F(f\, \overline{\phi (\cdo -x)})(\xi ).
\end{equation}
We note that the right-hand side defines an element in $\mathscr
S'(\rr {2d})\bigcap C^\infty (\rr {2d})$, and 
that $V_\phi f$ takes the form
\begin{equation}\tag*{(\ref{defstft})$'$}
V_\phi f(x,\xi ) =(2\pi )^{-d/2}\int _{\rr d}f(y)\overline {\phi
(y-x)}e^{-i\scal y\xi}\, dy
\end{equation}
when $f\in L^q_{(\omega )}$ for some $\omega \in
\mascP (\rr d)$.

\par

In order to extend the definition of the short-time Fourier
transform we reformulate \eqref{defstft}
in terms of partial Fourier transforms and tensor products
(cf. \cite{Fo}). More precisely, let $\mathscr F_2F$
be the partial Fourier transform of $F(x,y)\in \mathscr S'(\rr {2d})$
with respect to the $y$-variable,
and let $U$ be the map which takes $F(x,y)$ into $F(y,y-x)$.
Then it follows that
\begin{equation}\label{tensorsftf}
V_\phi f =(\mathscr F_2\circ U)(f\otimes \overline \phi )
\end{equation}
when $f\in \mathscr S'(\rr d)$ and $\phi \in \mathscr S(\rr d)$.

\par

The following result concerns the map
\begin{equation}\label{stftmap}
(f,\phi )\mapsto V_\phi f.
\end{equation}

\par

\begin{prop}\label{stftGelfand1}
The map \eqref{stftmap} from $\mathscr S(\rr d)\times \mathscr
S(\rr d)$ to $\mathscr S'(\rr {2d})$ is uniquely extendable to a continuous
map from $\mathcal S_{1/2}'(\rr d)\times \mathcal S'_{1/2}(\rr d)$ to
$\mathcal S'_{1/2}(\rr {2d})$. Furthermore, if $s\ge 1/2$ and
$f,\phi \in \mathcal S'_{1/2}(\rr d)\setminus 0$,
then the following is true:
\begin{enumerate}
\item the map \eqref{stftmap} restricts to a continuous map from
$\mathcal S_{s}(\rr d)\times \mathcal S_{s}(\rr d)$ to $\mathcal
S_{s}(\rr {2d})$. Moreover, $V_\phi f\in \mathcal S_{s}(\rr {2d})$,
if and only if $f,\phi \in \mathcal S_{s}(\rr d)$;

\vrum

\item the map \eqref{stftmap} restricts to a continuous map from
$\mathcal S_{s}'(\rr d)\times \mathcal S_{s}'(\rr d)$ to $\mathcal
S_{s}'(\rr {2d})$. Moreover, $V_\phi f\in \mathcal S_{s}'(\rr {2d})$,
if and only if $f,\phi \in \mathcal S_{s}'(\rr d)$.
\end{enumerate}

\par

Similar facts hold after $\mathcal S_s$ and $\mathcal S_s'$
are replaced by $\Sigma _s$ and $\Sigma _s'$, respectively.
\end{prop}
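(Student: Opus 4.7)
The proof is driven by the factorization
$$
V_\phi f = (\mathscr F_2\circ U)(f\otimes \overline \phi )
$$
from \eqref{tensorsftf}. I analyze the three constituent maps one at a time on the Gelfand-Shilov scale. The first map is the tensor product $(f,\phi )\mapsto f\otimes \overline \phi$. By the facts on tensor products of Gelfand-Shilov (ultra)distributions recalled earlier in the section, this map is separately continuous from $\mathcal S_{1/2}'(\rr d)\times \mathcal S_{1/2}'(\rr d)$ into $\mathcal S_{1/2}'(\rr {2d})$, it restricts to a continuous map $\mathcal S_s(\rr d)\times \mathcal S_s(\rr d)\to \mathcal S_s(\rr {2d})$, and similarly for the duals and for the Beurling spaces. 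The iff clauses recalled there give that $f\otimes \overline \phi \in \mathcal S_s(\rr {2d})$ precisely when both factors lie in $\mathcal S_s(\rr d)$ (using $f,\phi \neq 0$), and analogously for $\mathcal S_s'$, $\Sigma _s$ and $\Sigma _s'$.

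The second map is pullback by the linear bijection $U(x,y)=(y,y-x)$ of $\rr {2d}$. Since the characterizations \eqref{GSexpcond} of Gelfand-Shilov spaces in Lemma \ref{GSFourierest} are invariant under linear invertible substitutions in $x$ and, after an application of the Fourier transform, in $\xi$, the pullback $F\mapsto F\circ U$ is a topological isomorphism of $\mathcal S_s(\rr {2d})$, of $\mathcal S_s'(\rr {2d})$, of $\Sigma _s(\rr {2d})$ and of $\Sigma _s'(\rr {2d})$, and maps $\mathcal S_{1/2}'(\rr {2d})$ onto itself. The third map is the partial Fourier transform $\mathscr F_2$ in the second block of $d$ variables. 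Because $\mathcal S_s=\mathcal S_s^s$ and $\Sigma _s=\Sigma _s^s$ have equal indices, the homeomorphism properties of $\mathscr F$ recalled above apply in the second variable, so $\mathscr F_2$ is a topological isomorphism of $\mathcal S_s(\rr {2d})$ and of $\mathcal S_s'(\rr {2d})$ (and similarly for $\Sigma _s$, $\Sigma _s'$ and $\mathcal S_{1/2}'$).

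Composing these three maps immediately gives the initial statement, the forward direction of (1) and the forward direction of (2), together with the corresponding statements for $\Sigma _s$ and $\Sigma _s'$. Uniqueness of the extension from $\mathscr S\times \mathscr S$ to $\mathcal S_{1/2}'\times \mathcal S_{1/2}'$ follows from density of $\mathscr S(\rr d)$ in $\mathcal S_{1/2}'(\rr d)$ (via finite linear combinations of Hermite functions, which lie in $\mathcal S_{t,h}^s$ for $s,t=1/2$ and $h$ large by the remark earlier) together with the separate continuity just established.

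For the converse directions in (1) and (2), I exploit that each of the three maps in the composition is a topological isomorphism on the relevant space. Thus if $V_\phi f\in \mathcal S_s(\rr {2d})$, applying $\mathscr F_2^{-1}$ and then the pullback by $U^{-1}$ yields $f\otimes \overline \phi \in \mathcal S_s(\rr {2d})$, whence $f,\phi \in \mathcal S_s(\rr d)$ by the tensor-product iff clause recalled in the excerpt; the $\mathcal S_s'$ case is identical, and the $\Sigma _s$, $\Sigma _s'$ analogues are also identical. The main obstacle I foresee is verifying the tensor-product iff characterization on the various spaces, but since this has been recalled explicitly in the preliminaries it can be cited as given, leaving the proof to a clean composition argument.
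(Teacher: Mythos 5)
Your proposal is correct and follows essentially the same route as the paper: the paper's proof likewise reduces everything to the factorization $V_\phi f = (\mathscr F_2\circ U)(f\otimes \overline \phi )$ and the continuity (and invertibility) of the tensor product, $U$ and $\mathscr F_2$ on $\mathcal S_s$, $\Sigma _s$ and their duals, citing \cite{CPRT10} for details. Your write-up simply makes explicit the steps the paper leaves implicit, including the use of the tensor-product iff clause for the converse directions.
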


\par

\begin{proof}
The result follows immediately from
\eqref{tensorsftf}, and the facts that tensor products, $\mathscr F_2$
and $U$ are continuous on $\mathcal S _s$, $\Sigma _s$ and their
duals. See also \cite{CPRT10} for details.
\end{proof}

\par

We also recall characterizations of Gelfand-Shilov spaces and their
distribution spaces
in terms of the short-time Fourier transform, obtained in \cite{GZ,Toft8}.
The involved conditions are
\begin{equation}\label{sts0t0assump}
s\ge s_0>0,\quad t\ge t_0>0\quad \text{and}\quad
s_0+t_0\ge 1
\end{equation}
\begin{align}
|V_\phi f(x,\xi )| &\lesssim e^{-\ep (|x|^{1/t}+|\xi |^{1/s})},
\label{stftexpest2}
\\[1ex]
|(\mathscr F(V_\phi f))(\xi ,x)| &\lesssim  e^{-\ep (|x|^{1/t}+|\xi |^{1/s})}
\label{stftexpest3}
\intertext{and}
|V_\phi f(x,\xi )| &\lesssim e^{\ep (|x|^{1/t}+|\xi |^{1/s})}.
\tag*{(\ref{stftexpest2})$'$}
\end{align}

\par

\begin{prop}\label{stftGelfand2}
Let $s,t,s_0,t_0\in \mathbf R$ satisfy \eqref{sts0t0assump}, and let
$\phi \in \mathcal S_{t_0}^{s_0}(\rr d)\setminus 0$ and $f\in
(\mathcal S_{t_0}^{s_0})'(\rr d)$.
Then the following is true:
\begin{enumerate}
\item $f\in  \mathcal S_{t}^s(\rr d)$, if and only if
\eqref{stftexpest2} holds for some $\ep > 0$.
Furthermore, if $f\in  \mathcal S_{t}^s(\rr d)$, then
\eqref{stftexpest3} holds for some $\ep > 0$;

\vrum

\item if in addition $(s,t)\neq (1/2,1/2)$ and $\phi \in \Sigma _{t}^{s}(\rr d)$, then $f\in
\Sigma _{t}^s(\rr d)$, if and only if \eqref{stftexpest2} holds
for every $\ep > 0$. Furthermore, if $f\in  \mathcal
S_{t}^s(\rr d)$, then \eqref{stftexpest3} holds for every $\ep > 0$.
\end{enumerate}
\end{prop}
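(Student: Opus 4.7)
The plan is to derive \eqref{stftexpest2} and \eqref{stftexpest3} from the Fourier-side characterization in Lemma \ref{GSFourierest} by combining the integral representation \eqref{defstft}$'$ with the algebraic identity
\begin{equation*}
\mathscr F(V_\phi f)(\xi, x) = e^{i\scal{x}{\xi}} f(-x)\overline{\hat \phi(\xi)},
\end{equation*}
obtained by direct evaluation from \eqref{defstft}$'$, and an STFT reconstruction formula. The technical engine is the elementary inequality
\begin{equation*}
|y|^\alpha + |y-x|^\beta \ge c_{\alpha,\beta}|x|^\alpha,\qquad x,y\in \rr d,
\end{equation*}
valid for $0 < \alpha \le \beta \le 1$, proved by splitting on whether $|y|\ge |x|/2$ (the complementary case uses $\alpha \le \beta$ to compare $|y-x|^\beta$ with $|x|^\alpha$ when $|x|\ge 1$, with boundedness handling $|x|\le 1$). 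I will apply it with $(\alpha,\beta)=(1/t,1/t_0)$ and $(\alpha,\beta)=(1/s,1/s_0)$, which are legitimate thanks to $t\ge t_0$ and $s\ge s_0$.

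For the forward direction of (1), Lemma \ref{GSFourierest} supplies $|f(y)|\lesssim e^{-\ep|y|^{1/t}}$, $|\hat f(\eta)|\lesssim e^{-\ep|\eta|^{1/s}}$, and the window gives $|\phi(y)|\lesssim e^{-\ep_0|y|^{1/t_0}}$, $|\hat\phi(\eta)|\lesssim e^{-\ep_0|\eta|^{1/s_0}}$ for some $\ep,\ep_0>0$. Substituting the bounds for $f$ and $\phi$ into \eqref{defstft}$'$, and peeling off a small portion of the exponent for $y$-integrability, the elementary inequality delivers $|V_\phi f(x,\xi)|\lesssim e^{-\ep'|x|^{1/t}}$ uniformly in $\xi$. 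A Parseval step rewrites \eqref{defstft}$'$ as
\begin{equation*}
V_\phi f(x,\xi) = e^{-i\scal{x}{\xi}}\,V_{\hat\phi}\hat f(\xi, -x),
\end{equation*}
so the $\xi$-decay of $V_\phi f$ reduces to the first-variable decay of $V_{\hat\phi}\hat f$; running the previous step with $\hat f\in \mathcal S_s^t$ and window $\hat\phi\in \mathcal S_{s_0}^{t_0}$ gives $|V_\phi f(x,\xi)|\lesssim e^{-\ep'|\xi|^{1/s}}$. The geometric mean of the two bounds is \eqref{stftexpest2}, and \eqref{stftexpest3} now follows directly from the Fourier identity above, since $|\mathscr F(V_\phi f)(\xi, x)| = |f(-x)|\,|\hat\phi(\xi)|$ and $1/s_0\ge 1/s$ absorbs the $|\xi|^{1/s_0}$-decay of $\hat\phi$ into the required $|\xi|^{1/s}$-decay.

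For the converse in (1), the STFT reconstruction formula
\begin{equation*}
f = \frac{1}{(\psi,\phi)_{L^2}}\iint V_\phi f(x,\xi)\,e^{i\scal{\cdot}{\xi}}\psi(\cdot - x)\,dx\,d\xi,
\end{equation*}
valid weakly in $(\mathcal S_{t_0}^{s_0})'$ by Proposition \ref{stftGelfand1} after pairing against a test function (for any auxiliary $\psi \in \mathcal S_{t_0}^{s_0}$ with $(\psi,\phi)_{L^2}\ne 0$), combined with \eqref{stftexpest2} gives
\begin{equation*}
|f(y)| \lesssim \iint e^{-\ep(|x|^{1/t}+|\xi|^{1/s})}|\psi(y-x)|\,dx\,d\xi \lesssim e^{-\ep'|y|^{1/t}},
\end{equation*}
where the last bound comes from integrating $\xi$ out and applying the elementary inequality to the $x$-integral. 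A dual argument (or applying $\mathscr F$ to the reconstruction formula) yields $|\hat f(\eta)|\lesssim e^{-\ep'|\eta|^{1/s}}$, and Lemma \ref{GSFourierest}(1) places $f$ in $\mathcal S_t^s$.

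Part (2) is obtained by running exactly the same scheme with ``for every $\ep>0$'' in place of ``for some $\ep>0$'': the constants in the elementary inequality are $\ep$-independent, the Parseval and the Fourier identity steps are algebraic, and the reconstruction preserves arbitrariness of decay. The hypothesis $(s,t)\ne(1/2,1/2)$ together with $\phi\in\Sigma_t^s$ guarantees $\Sigma_t^s(\rr d)\ne\{0\}$, so that Lemma \ref{GSFourierest}(2) is applicable. I expect the main obstacle to be the exponent bookkeeping---verifying that $s\ge s_0>0$, $t\ge t_0>0$ and $s_0+t_0\ge 1$ are each used at the right place so that both the auxiliary windows $\phi,\psi$ and their Fourier transforms $\hat\phi,\hat\psi$ live in the Gelfand--Shilov spaces required by the various substeps---together with a clean justification of the reconstruction formula outside the $L^2$ setting via Proposition \ref{stftGelfand1}.
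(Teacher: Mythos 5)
The paper does not actually prove this proposition: it defers entirely to \cite[Theorem 2.7]{GZ}, so your argument is supplying a proof where the paper gives only a citation. Your route --- the fundamental identity $V_\phi f(x,\xi )=e^{-i\scal x\xi}V_{\widehat \phi}\widehat f(\xi ,-x)$, the formula $\mathscr F(V_\phi f)(\xi ,x)=e^{i\scal x\xi}f(-x)\overline {\widehat \phi (\xi )}$ (both of which I checked against the paper's normalization of $\mathscr F$ and $V_\phi$; they are correct), the convolution-type estimate for the forward direction, and the inversion formula for the converse --- is exactly the standard STFT characterization argument and is what the cited reference does; the skeleton is sound, including the bookkeeping that uses $t\ge t_0$ for the $x$-decay and $s\ge s_0$ for the $\xi$-decay.

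Two points need tightening. First, your \emph{elementary inequality} $|y|^\alpha +|y-x|^\beta \ge c_{\alpha ,\beta}|x|^\alpha$ is false as stated when $\alpha <\beta$: take $y=0$ and let $|x|\to 0$, so the left-hand side is $|x|^\beta =o(|x|^\alpha )$. ``Boundedness for $|x|\le 1$'' bounds the right-hand side but not from below by the left-hand side. What is true, and what you actually need, is the version with an additive constant, $|y|^\alpha +|y-x|^\beta \ge c|x|^\alpha -C$, equivalently $e^{-\ep (|y|^\alpha +|y-x|^\beta )}\lesssim e^{-\ep '|x|^\alpha}$; with that correction every application in your argument goes through, and in part (2) one should note that $\ep '$ can be taken arbitrarily large as $\ep \to \infty$. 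Second, the converse direction leans on the inversion formula for $f\in (\mathcal S_{t_0}^{s_0})'$, and Proposition \ref{stftGelfand1} by itself only gives continuity of $(f,\phi )\mapsto V_\phi f$; to justify $\scal fg=(\psi ,\phi )_{L^2}^{-1}\scal {V_\phi f}{\overline{V_\psi g}}$ you need Moyal's identity extended to the dual pairing between $(\mathcal S_{t_0}^{s_0})'$ and $\mathcal S_{t_0}^{s_0}$ (standard, but it is the one genuinely nontrivial analytic input you should cite or prove). You correctly identify this as the delicate step; once it is in place, the absolute convergence of your majorizing integral also shows $f$ is continuous, so the pointwise bounds required by Lemma \ref{GSFourierest} make sense.
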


\par

We refer to \cite[Theorem 2.7]{GZ} for the proof of Theorem \ref{stftGelfand2}.
The corresponding result for Gelfand-Shilov distributions is the following., and
refer to \cite[Theorem 2.5]{Toft8} for the proof. Note that there
is a misprint in the second statement \cite[Theorem 2.5]{Toft8},
where it stays $f\in  \Sigma _{t}^s(\rr d)$ instead of $f\in
(\Sigma _{t}^s)'(\rr d)$.

\par

\begin{prop}\label{stftGelfand2dist}
Let $s,t,s_0,t_0\in \mathbf R$ satisfy \eqref{sts0t0assump} and $(s,t)\neq (1/2,1/2)$,
and let $\phi \in \Sigma _{t}^s(\rr d)\setminus 0$ and
$f\in (\mathcal S_{t_0}^{s_0})'(\rr d)$. Then the following is true:
\begin{enumerate}
\item $f\in  (\mathcal S_{t}^s)'(\rr d)$, if and only if \eqref{stftexpest2}$'$
holds for every $\ep > 0$;

\vrum

\item $f\in  (\Sigma _{t}^s) '(\rr d)$, if and only if \eqref{stftexpest2}$'$
holds for some $\ep > 0$.
\end{enumerate}
\end{prop}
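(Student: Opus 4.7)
The plan is to derive Proposition \ref{stftGelfand2dist} from its ``function-side'' counterpart Proposition \ref{stftGelfand2} via duality, relying on the Moyal/reconstruction identity
\begin{equation*}
\scal f\psi =\nm \phi {L^2}^{-2}\iint V_\phi f(x,\xi )\overline{V_\phi \psi (x,\xi )}\, dx\, d\xi ,
\end{equation*}
which holds classically on $L^2(\rr d)$ and extends, by the continuity in Proposition \ref{stftGelfand1}, to every pair with $f\in (\mathcal S_{t_0}^{s_0})'(\rr d)$ and $\psi \in \mathcal S_{t_0}^{s_0}(\rr d)$, once absolute convergence of the right-hand side has been secured.

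For the forward implication in (1), I would use $V_\phi f(x,\xi )=\scal f{M_\xi T_x\phi}$, where $M_\xi$ and $T_x$ denote modulation and translation. Since $\phi \in \Sigma _t^s(\rr d)$, Lemma \ref{GSFourierest} together with the elementary inequality $|y|^{1/t}\le C(|y-x|^{1/t}+|x|^{1/t})$ (and similarly on the Fourier side) provides, for every $h>0$ and every $\ep >0$, a constant $C_{h,\ep }$ with
\begin{equation*}
\nm{M_\xi T_x\phi }{\mathcal S_{t,h}^s}\le C_{h,\ep }\, e^{\ep (|x|^{1/t}+|\xi |^{1/s})}.
\end{equation*}
If $f\in (\mathcal S_t^s)'(\rr d)$, then $|\scal f\psi |\lesssim \nm \psi {\mathcal S_{t,h}^s}$ for \emph{some} $h>0$, and applying this to $\psi =M_\xi T_x\phi$ yields \eqref{stftexpest2}$'$ for every $\ep >0$. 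For (2), if $f\in (\Sigma _t^s)'(\rr d)$, then $f$ is continuous on $\mathcal S_{t,h}^s(\rr d)$ only for some fixed, possibly large, $h>0$, which forces the corresponding $\ep$ in the above display to be fixed and gives \eqref{stftexpest2}$'$ for one single sufficiently large $\ep >0$.

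For the converse in (1), assume \eqref{stftexpest2}$'$ for every $\ep >0$ and let $\psi \in \mathcal S_{t,h}^s(\rr d)$. By Proposition \ref{stftGelfand2}(1) there exists $\ep '>0$ with $|V_\phi \psi (x,\xi )|\lesssim e^{-\ep '(|x|^{1/t}+|\xi |^{1/s})}$; choosing $\ep <\ep '$ in the growth bound on $V_\phi f$, the Moyal integral is absolutely convergent and controls $|\scal f\psi |$ by a fixed seminorm of $\psi$, so $f\in (\mathcal S_t^s)'(\rr d)$. The same scheme handles (2): if \eqref{stftexpest2}$'$ holds for some $\ep >0$, then by Proposition \ref{stftGelfand2}(2) every $\psi \in \Sigma _t^s(\rr d)$ yields $|V_\phi \psi (x,\xi )|\lesssim e^{-\ep ''(|x|^{1/t}+|\xi |^{1/s})}$ for every $\ep ''>0$, and taking $\ep ''>\ep$ gives convergence and continuity with respect to each defining Fr{\'e}chet seminorm of $\Sigma _t^s(\rr d)$.

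The principal technical obstacle is the bookkeeping between ``for some $\ep $'' and ``for every $\ep $'' and the matching with the $h$ parameter indexing the Banach scale $\mathcal S_{t,h}^s(\rr d)$; in particular the constants $C_{h,\ep }$ above must be tracked as $h\to 0$ or $\ep \to 0$. A secondary point is to justify the Moyal identity rigorously under the very weak hypothesis $f\in (\mathcal S_{t_0}^{s_0})'(\rr d)$, which is achieved by approximating $f$ by elements of $\mathcal S_{t_0}^{s_0}(\rr d)$ and passing to the limit via the continuity recorded in Proposition \ref{stftGelfand1}, using absolute convergence of the integrals above.
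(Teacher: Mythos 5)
First, a remark on the comparison itself: the paper does not prove Proposition \ref{stftGelfand2dist} — it refers the reader to Theorem 2.5 of \cite{Toft8} — so your proposal has to stand on its own. Your overall strategy (testing $f$ against the time–frequency shifts $e^{i\scal \cdo \xi }\phi (\cdo -x)$ for the forward implications, and using the Moyal identity together with Proposition \ref{stftGelfand2} for the converses) is the standard route and is sound in outline. The converse halves are essentially right, modulo two points you should make explicit: you need the \emph{quantitative} form of Proposition \ref{stftGelfand2}, namely $|V_\phi \psi (x,\xi )|\le C_h\nm \psi {\mathcal S_{t,h}^s}\, e^{-\ep '(h)(|x|^{1/t}+|\xi |^{1/s})}$ uniformly over $\psi \in \mathcal S_{t,h}^s(\rr d)$ (the proposition as stated gives a constant depending on $\psi$), and after obtaining $|\scal f\psi |\le C\nm \psi {\mathcal S_{t,h}^s}$ on a dense subspace you must still invoke density and the identification of the extension with $f$ to conclude membership in the dual.

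The genuine gap is in the forward implications, and it sits exactly where you flag the ``bookkeeping'': the displayed estimate $\nm {e^{i\scal \cdo \xi }\phi (\cdo -x)}{\mathcal S_{t,h}^s}\le C_{h,\ep }\, e^{\ep (|x|^{1/t}+|\xi |^{1/s})}$ is \emph{false} as stated ``for every $h>0$ and every $\ep >0$''. Expanding $\partial ^\alpha \big (e^{i\scal \cdo \xi }\phi (\cdo -x)\big )$ by Leibniz produces factors $|\xi |^{|\gamma |}$, and $\sup _\xi |\xi |^k e^{-\ep |\xi |^{1/s}}\asymp k!^s (s/\ep )^{sk}$ up to geometric factors, so dividing by $h^{|\alpha |}\alpha !^s$ stays bounded only when $h\gtrsim \ep ^{-s}$ (and similarly $h\gtrsim \ep ^{-t}$ from the $|x|^{|\delta |}$ factors); the correct statement couples the parameters: for every $\ep >0$ there is a (large) $h$ for which the bound holds. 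Moreover you have the continuity characterizations reversed: $f\in (\mathcal S_t^s)'(\rr d)=\bigcap _{h>0}(\mathcal S_{t,h}^s)'(\rr d)$ means $|\scal f\psi |\le C_h\nm \psi {\mathcal S_{t,h}^s}$ for \emph{every} $h$, while $f\in (\Sigma _t^s)'(\rr d)=\bigcup _{h>0}(\mathcal S_{t,h}^s)'(\rr d)$ means this for \emph{some} $h$. As written, your display combined with ``continuity for some $h$'' would yield \eqref{stftexpest2}$'$ for every $\ep$ in both cases (1) and (2), erasing precisely the distinction the proposition asserts — and your treatment of (2) tacitly contradicts your own display by reintroducing a dependence of $\ep$ on $h$. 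The repair is routine but essential: prove the shift estimate with the coupled quantifiers, after which $\bigcap _h$ yields ``every $\ep$'' in (1) and $\bigcup _h$ yields ``some $\ep$'' in (2).
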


\par

There are several other ways to characterize Gelfand-Shilov
spaces. For example, they can easily be characterized by
Hermite functions (cf. e.{\,}g. \cite{GrPiRo}).

\par

\subsection{Mixed quasi-normed space of Lebesgue types}

\par

Let $p,q\in (0,\infty ]$, and let $\omega \in \mascP _E(\rr {2d})$.
A common type of of mixed quasi-norm space on $\rr {2d}$ is
$L^{p,q}_{(\omega )}(\rr {2d})$, which consists
of all measurable functions $F$ on $\rr {2d}$ such that
$$
\nm g{L^q(\rr d)}<\infty ,
\quad \text{where}\quad
g(\xi ) \equiv \nm {F(\cdo ,\xi )\omega (\cdo ,\xi )}{L^p(\rr d)}.
$$

\par

Next we introduce a broader family of mixed quasi-norm spaces on
$\rr d$, where the pair $(p,q)$ above
is replaced by a vector in $(0,\infty ]^d$ of Lebesgue exponents.
If
$$
\mabfp =(p_1,\dots , p_d)\in (0,\infty ]^d
\quad \text{and}\quad
\mabfq =(q_1,\dots , q_d)\in (0,\infty ]^d
$$
are two such vectors, then we use the conventions $\mabfp \le \mabfq$
when $p_j\le q_j$ for every $j=1,\dots ,d$, and $\mabfp < \mabfq$
when $p_j< q_j$ for every $j=1,\dots ,d$.

\par

Let $\operatorname {S}_d$ be the set of permutations
on $\{ 1,\dots ,d\}$, $\mabfp \in
(0,\infty ]^d$, $\omega \in \mascP _E(\rr d)$, and let $\sigma
\in \operatorname {S}_d$.
For every measurable and complex-valued function $f$ on
$\rr d$, let
$g_{j,\omega }$, $j=1,\dots ,d$, be defined inductively by
the formulas
\begin{align}
g_\omega (x_{\sigma (1)},\dots ,x_{\sigma (d)}) &\equiv |f(x_1,\dots ,x_d)
\omega (x_1,\dots ,x_d)|,\label{gomegadef}
\\[1ex]
g_{1,\omega}(x_2,\dots ,x_d) &\equiv \nm {g_\omega (\cdo ,x_2,\dots ,x_d) }
{L^{p_1}(\mathbf R)},\notag
\\[1ex]
g_{k,\omega }(x_{k+1},\dots ,x_d) &\equiv \nm {g_{k-1,\omega}(\cdo ,
x_{k+1},\dots ,x_d) }
{L^{p_k}(\mathbf R)},
\quad k=2,\dots ,d-1 .\notag
\intertext{and}
\nm f{L^{\mabfp}_{\sigma ,(\omega )}} \equiv g_{d,\omega} &\equiv
\nm {g_{d-1,\omega}}{L^{p_d}(\mathbf R)}.\notag
\end{align}
The mixed quasi-norm space $L^{\mabfp}_{\sigma ,(\omega )}(\rr d)$ of
Lebesgue type is defined as the set of all complex-valued measurable functions
$f$ on $\rr d$ such that $\nm f{L^{\mabfp }_{\sigma ,(\omega )}}<\infty$.

\par

The set of sequences $\ell ^{\mabfp }_{\sigma ,(\omega )}(\Lambda)$,
for an appropriate lattice $\Lambda$ is defined in
an analogous way. More precisely, let
$\theta =(\theta _1,\dots ,\theta _d)\in \rrstar d$, and let
$T_\theta$ denote the diagonal matrix with diagonal elements
$\theta _1,\dots ,\theta _d$. Here $\mathbf R_*=\mathbf
R\setminus 0$ and we interprete $\rrstar d$ as $(\mathbf
R\setminus 0)^d$. Also let
\begin{equation}\label{LambdaDef}
\Lambda = T_\theta \zz d \equiv
\sets {(\theta _1j_1,\dots ,\theta _dj_d)}{(j_1,\dots ,j_d)\in \zz d} .
\end{equation}

\par

For any sequence $a$ on $T_\theta \zz d$, let
$b_{j,\omega }$, $j=1,\dots ,d$, be defined inductively by the formulas
\begin{align}
b_\omega (j_{\sigma (1)},\dots ,j_{\sigma (d)}) &\equiv |a(j_1,\dots ,j_d)
\omega (j_1,\dots ,j_d)|,\label{bomegadef}
\\[1ex]
b_{1,\omega}(j_2,\dots ,j_d) &\equiv \nm {b_\omega (\cdo ,j_2,\dots ,j_d) }
{\ell ^{p_1}(\theta _1\mathbf Z)}\notag
\\[1ex]
b_{k,\omega }((j_{k+1},\dots ,j_d) &\equiv
\nm {b_{k-1,\omega}(\cdo ,j_{k+1},\dots ,j_d) }
{\ell ^{p_k}(\theta _k\mathbf Z)},\quad k=2,\dots ,d-1 \notag
\intertext{and}
\nm a{\ell ^{\mabfp}_{\sigma ,(\omega )}(\Lambda )}
\equiv b_{d,\omega} &\equiv \nm {b_{d-1,\omega}}{\ell ^{p_d}(\theta
_d\mathbf Z)}.\notag
\end{align}
The mixed quasi-norm space
$\ell ^{\mabfp}_{\sigma ,(\omega )}(\Lambda )$
is defined as the set of all sequences functions
$a$ on $\Lambda$ such that
$\nm a{\ell ^{\mabfp}_{\sigma ,(\omega )}(\Lambda )}<\infty$.

\par

We also write $L^{\mabfp }_{(\omega )}$ and $\ell ^{\mabfp }
_{(\omega )}$ instead of $L^{\mabfp }_{\sigma ,(\omega )}$
and $\ell ^{\mabfp } _{\sigma ,(\omega )}$ respectively when $\sigma$
is the identity map. Furthermore, if $\omega$ is equal to $1$, then we write
\begin{alignat*}{5}
&L^{\mabfp }_{\sigma }, &
\quad
&\ell ^{\mabfp } _{\sigma }, &
&L^{\mabfp } &
\quad &\text{and} & \quad
&\ell ^{\mabfp }
\intertext{instead of}
&L^{\mabfp }_{\sigma ,(\omega )}, &
\quad
&\ell ^{\mabfp } _{\sigma ,(\omega )}, &\quad 
&L^{\mabfp }_{(\omega )} &
\quad &\text{and} & \quad
&\ell ^{\mabfp } _{(\omega )},
\end{alignat*}
respectively.

\par

For any $\mabfp \in (0,\infty ]^d$, let
$$
\max \mabfp \equiv \max (p_1,\dots ,p_d)
\quad \text{and}\quad
\min \mabfp \equiv \min (p_1,\dots ,p_d).
$$
We note that if $\max \mabfp <\infty$, then $\ell _0 (\Lambda )$ is dense in
$\ell ^\mabfp _{\sigma ,(\omega )} (\Lambda )$. Here $\ell _0
(\Lambda )$ is the set of all sequences $\{ a(j)\} _{j\in \Lambda}$
on $\Lambda$ such that $a(j)\neq 0$ for at most finite numbers of
$j$.

\par

\subsection{Modulation spaces}\label{subsec1.2}

\par

Next we define modulation spaces.
Let $\phi \in \maclS _{1/2}(\rr d)\setminus 0$. For any $p,q\in (0.\infty ]$
and $\omega \mascP _E(\rr {2d})$,
the \emph{standard} modulation space $M^{p,q}_{(\omega )}(\rr d)$
is the set of all $f\in \maclS _{1/2}'(\rr d)$ such that $V_\phi f\in
L^{p,q}_{(\omega )}(\rr {2d})$, and we equip $M^{p,q}_{(\omega )}(\rr d)$
with the quasi-norm
$$
\nm f{M^{p,q}_{(\omega )}}\equiv \nm {V_\phi f}{L^{p,q}_{(\omega )}}.
$$
We remark that $M^{p,q}_{(\omega )}(\rr d)$ is one of the
most common types of modulation spaces.

\par

More generally, for any $\sigma \in \operatorname S_{2d}$,
$\mabfp \in (0,\infty ]^{2d}$ and $\omega \in \mascP _E(\rr {2d})$,
the modulation space $M^\mabfp _{\sigma ,(\omega )}(\rr d)$
is the set of all $f\in \maclS _{1/2}'(\rr d)$ such that $V_\phi f\in
L^{\mabfp}_{\sigma ,(\omega )}(\rr {2d})$, and we equip
$M^{\mabfp}_{\sigma (\omega )}(\rr d)$ with the quasi-norm
\begin{equation}\label{modnorm2}
\nm f{M^{\mabfp}_{\sigma , (\omega )} }\equiv
\nm {V_\phi f}{L^{\mabfp}_{\sigma ,(\omega )}}.
\end{equation}

\par

In the following propositions we list some properties for modulation.
The first one follows from the definition of invariant spaces and
Propositions \ref{stftGelfand2} and \ref{stftGelfand2dist}. The other
results can be found in \cite{Fei1,FG1,FG2,Gc2,Toft5}. The proofs are
therefore omitted

\par

\begin{prop}\label{p1.4A}
Let $\omega \in \mascP  _{E}(\rr {2d})$, $\sigma \in \operatorname S_{2d}$ and
$\mabfp \in (0,\infty ]^{2d}$. Then the following is true:
\begin{enumerate}
\item[{\rm{(1)}}] $\Sigma _1(\rr d)\subseteq M^{\mabfp}_{\sigma ,(\omega )}(\rr d)
\subseteq \Sigma _1'(\rr d)$;

\vrum

\item[{\rm{(2)}}] if in addition
$$
e^{-\ep |\cdo |}\lesssim \omega \lesssim e^{\ep |\cdo |},
$$
holds for every $\ep >0$, then
$\maclS _1(\rr d)\subseteq M^{\mabfp}_{\sigma ,(\omega )}(\rr d)
\subseteq \maclS _1'(\rr d)$;

\vrum

\item[{\rm{(3)}}]  if in addition $\omega \in \mascP (\rr {2d})$, then
$\mathscr S(\rr d)\subseteq M^{\mabfp}_{\sigma ,(\omega )}(\rr d)
\subseteq \mathscr S '(\rr d)$.
\end{enumerate}
\end{prop}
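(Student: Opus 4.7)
The plan is to fix a window $\phi\in\Sigma_1(\rr d)\setminus 0$, so that $V_\phi\phi\in\Sigma_1(\rr{2d})$ by Proposition \ref{stftGelfand1}, and to reduce each of the three statements to the STFT characterizations of Gelfand-Shilov spaces and their duals supplied by Propositions \ref{stftGelfand2} and \ref{stftGelfand2dist}, combined with the hypothesis on $\omega$. I write $z=(x,\xi)\in\rr{2d}$ and normalize $\nm\phi{L^2}=1$. In each case, the upper embedding follows by turning super-exponential (or rapid) decay of $V_\phi f$ into an $L^{\mabfp}_{\sigma}$-bound against $\omega$, while the lower embedding follows by turning an $L^{\mabfp}_{\sigma}$-bound back into a pointwise growth estimate on $V_\phi f$ via the STFT reproducing formula.

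For the upper embedding in (1), let $f\in\Sigma_1(\rr d)$. Proposition \ref{stftGelfand2}(2) with $s=t=1$ gives $|V_\phi f(z)|\lesssim e^{-\ep|z|}$ for every $\ep>0$, while moderation of $\omega\in\mascP_E(\rr{2d})$ yields some $r>0$ with $\omega(z)\lesssim e^{r|z|}$. Choosing $\ep>r+1$ produces $|V_\phi f(z)\omega(z)|\lesssim e^{-|z|}$, which lies in every mixed quasi-norm space $L^{\mabfp}_{\sigma}(\rr{2d})$. Parts (2) and (3) run identically: in (2) the assumption $\omega\lesssim e^{\ep|\cdot|}$ for every $\ep>0$ pairs with $f\in\maclS_1(\rr d)$ via Proposition \ref{stftGelfand2}(1); in (3) polynomial moderation $\omega\lesssim\eabs{\cdot}^{N_0}$ pairs with the rapid decay of $V_\phi f$ for $f\in\mathscr S(\rr d)$.

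For the lower embedding in (1), let $f\in\maclS_{1/2}'(\rr d)$ with $G:=|V_\phi f|\omega\in L^{\mabfp}_{\sigma}(\rr{2d})$. The orthogonality relation for the STFT gives the pointwise dominance
$$
|V_\phi f(z)|\ \lesssim\ \bigl(|V_\phi f|\ast |V_\phi\phi|_0\bigr)(z),\qquad |V_\phi\phi|_0(w):=|V_\phi\phi(-w)|.
$$
Since $\phi\in\Sigma_1(\rr d)$, Proposition \ref{stftGelfand1} gives $V_\phi\phi\in\Sigma_1(\rr{2d})$, hence $|V_\phi\phi(w)|\lesssim e^{-R|w|}$ for every $R>0$. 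Using $\omega(w)^{-1}\lesssim e^{r|w|}$ together with $|w|\le|w-z|+|z|$, we obtain
$$
|V_\phi f(z)|\ \lesssim\ e^{r|z|}\int_{\rr{2d}} G(w)\,e^{-(R-r)|w-z|}\,dw.
$$
For $R$ large, the kernel $e^{-(R-r)|\cdot|}$ decays faster than any polynomial; decomposing $\rr{2d}$ into unit cubes and exploiting that this kernel lies in every $L^{\mabfq}_\sigma$ shows that the convolution against $G\in L^{\mabfp}_\sigma$ is uniformly bounded in $z$. Hence $|V_\phi f(z)|\lesssim e^{r|z|}$, and Proposition \ref{stftGelfand2dist}(2) delivers $f\in\Sigma_1'(\rr d)$.

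Parts (2) and (3) follow by the same two-sided argument: in (2) the constant $r$ may be taken arbitrarily small, producing $|V_\phi f(z)|\lesssim e^{\ep|z|}$ for every $\ep>0$ and hence $f\in\maclS_1'(\rr d)$ by Proposition \ref{stftGelfand2dist}(1); in (3) the exponential factor $e^{r|z|}$ is replaced throughout by a power of $\eabs z$, giving $f\in\mathscr S'(\rr d)$. The main technical obstacle is the convolution estimate in the lower embedding when $\min\mabfp<1$, where duality is unavailable in the quasi-Banach range; the workaround is to exploit the super-exponential decay of $V_\phi\phi$ so that discretization reduces matters to an absolutely $\ell^1$-summable sum of local contributions — the very mechanism that underlies the analysis in \cite{GaSa} and will be reused in Section~\ref{sec2}.
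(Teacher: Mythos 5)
Your overall strategy --- fix a window in $\Sigma _1(\rr d)$ and reduce everything to the STFT characterizations of Propositions \ref{stftGelfand2} and \ref{stftGelfand2dist} together with the two-sided exponential bounds \eqref{moderateconseq} on $\omega$ --- is exactly what the paper indicates (it offers nothing beyond this one-line reduction), and your treatment of the three upper embeddings is correct. The gap is in the lower embeddings, at the step where you claim that the convolution of $G=|V_\phi f|\,\omega \in L^{\mabfp}_{\sigma}$ against the kernel $e^{-(R-r)|\cdot |}$ is uniformly bounded in $z$ ``by decomposing $\rr {2d}$ into unit cubes''. When $\min \mabfp <1$ this is false for a general element of $L^{\mabfp}_{\sigma}$: the function $G(w)=|w-w_0|^{-2d}\chi _{|w-w_0|<1}(w)$ belongs to $L^{p}(\rr {2d})$ for every $p<1$ but is not locally integrable, so $(G*e^{-c|\cdot |})(z)\equiv +\infty$. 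Concretely, your cube decomposition bounds the convolution by $\sum _j e^{-c\, \mathrm{dist}(z,j+Q)}\nm G{L^1(j+Q)}$, and the local $L^1$ norms are \emph{not} dominated by the local $L^{\mabfp}$ quasi-norms once some exponent drops below $1$ (H{\"o}lder on a set of measure one goes in the wrong direction), so the ``absolutely $\ell ^1$-summable sum of local contributions'' you invoke is not available from membership in $L^{\mabfp}_{\sigma}$ alone.

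What rescues the argument --- and what is the actual mechanism in \cite{GaSa} that you allude to but do not identify --- is that $G$ is not an arbitrary element of $L^{\mabfp}_{\sigma}$ but a moderate weight times a short-time Fourier transform, which for a Gaussian window obeys the sub-mean-value estimate of Lemma \ref{SubharmonicEst}: $|V_\phi f(z_0)|\le C\nm {V_\phi f}{L^{p}(B_1(z_0))}$ with $C$ independent of $z_0$ and $f$. Taking a Gaussian $\phi$ (which lies in $\Sigma _1$), $p=\min (1,\min \mabfp )$, and using that $\omega$ is $v$-moderate, this gives directly $|V_\phi f(z_0)|\, \omega (z_0)\lesssim \nm {V_\phi f\cdot \omega}{L^{p}(B_1(z_0))}\le \nm {V_\phi f}{L^{\mabfp}_{\sigma ,(\omega )}}$, hence $|V_\phi f(z)|\lesssim \omega (z)^{-1}\lesssim e^{r|z|}$ (respectively $\lesssim e^{\ep |z|}$ for every $\ep$, respectively polynomially), and Proposition \ref{stftGelfand2dist} finishes each case with no convolution inequality needed at all. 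You should either replace your convolution step by this pointwise estimate, or insert Lemma \ref{SubharmonicEst} into your cube decomposition so as to control $\sup _{j+Q}|V_\phi f|$ by $\nm {V_\phi f}{L^{\min \mabfp}(j+2Q)}$ before summing; as written, the lower embeddings are only justified for $\min \mabfp \ge 1$.
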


\par

\begin{prop}\label{p1.4B}
Let
$$
p,q\in [1,\infty ],\quad \mabfp ,\mabfp _j\in [1,\infty ] ^{2d},
\quad
\omega ,\omega _j,v\in \mascP  _{E}(\rr {2d}),
\quad j=1,2,
$$
be such that
$\mabfp _1\le \mabfp _2$,  $\omega _2\lesssim \omega _1$, and
$\omega$ is $v$-moderate. Also
let $\sigma \in \operatorname S_{2d}$. Then the following is true:
\begin{enumerate}
\item if $\phi \in M^1_{(v)}(\rr d)\setminus 0$, then
$f\in M^{\mabfp}_{\sigma ,(\omega )}(\rr d)$, if and only if
$$
\nm {V_\phi f} {L^{\mabfp}_{\sigma ,(\omega )}}<\infty .
$$
In particular, $M^{\mabfp}_{\sigma ,(\omega )}(\rr d)$ is independent
of the choice of $\phi \in M^1_{(v)}(\rr d)\setminus 0$.
Moreover, $M^{\mabfp}_{\sigma ,(\omega )}(\rr d)$ is a Banach
space under the norm in \eqref{modnorm2}, and different
choices of $\phi$ give rise to equivalent norms;

\vrum

\item[{\rm{(2)}}] $M^{\mabfp _1}_{\sigma ,(\omega _1)}(\rr d)\subseteq
M^{\mabfp _2}_{\sigma ,(\omega _2)}(\rr d)$;

\vrum

\item[{\rm{(3)}}] the $L^2$-form on $\maclS _{1/2}(\rr d)$ extends uniquely
to a dual form between $M^{p,q}_{(\omega )}(\rr d)$ and
$M^{p',q'}_{(1/\omega )}(\rr d)$. Furthermore, if in addition $p,q<\infty$,
then the dual of $M^{p,q}_{(\omega )}$ can be identified with
$M^{p',q'}_{(1/\omega )}(\rr d)$ through this form.
\end{enumerate}
\end{prop}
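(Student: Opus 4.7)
The plan is to use the three main tools of modulation space theory: the change-of-window identity for the short-time Fourier transform, a weighted Young inequality on the mixed-norm spaces $L^{\mabfp}_{\sigma ,(\omega )}$, and the STFT inversion formula. All three specialize cleanly to the Banach range $\mabfp \in [1,\infty ]^{2d}$ of this proposition, and the hard part will be verifying the weighted Young inequality at the appropriate generality.

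First I would establish the pointwise convolution estimate
\begin{equation*}
|V_{\phi_1}f(x,\xi )|\leq \frac{1}{|\scal {\phi_2}{\phi_3}|}\bigl(|V_{\phi_3}f|*|V_{\phi_1}\phi_2|\bigr)(x,\xi ),
\end{equation*}
valid for $f\in \maclS '_{1/2}(\rr d)$ and $\phi_j\in \maclS _{1/2}(\rr d)$ with $\scal {\phi_2}{\phi_3}\neq 0$, by a direct computation from the STFT orthogonality relations, justified in this distributional generality by Proposition \ref{stftGelfand1}. Combined with the weighted Young estimate
\begin{equation*}
\nm {F*G}{L^{\mabfp}_{\sigma ,(\omega )}}\leq C\nm F{L^{\mabfp}_{\sigma ,(\omega )}}\nm G{L^1_{(v)}},
\end{equation*}
which for $\mabfp \in [1,\infty ]^{2d}$ and $v$-moderate $\omega$ follows by iterating Minkowski in each slot of the mixed norm after absorbing the weight, this yields (1): the finiteness of $\nm {V_\phi f}{L^{\mabfp}_{\sigma ,(\omega )}}$ and the value of the norm up to equivalence are both independent of $\phi \in M^1_{(v)}\setminus 0$, since $V_{\phi _1}\phi _2\in L^1_{(v)}$ is by definition membership of $\phi _2$ in $M^1_{(v)}$ with window $\phi _1$. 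Completeness is standard: a Cauchy sequence $\{f_n\}$ produces a Cauchy sequence $\{V_\phi f_n\}$ in the complete space $L^{\mabfp}_{\sigma ,(\omega )}$ converging to some $F$, and the STFT inversion formula identifies $F=V_\phi f$ for a unique $f\in \maclS '_{1/2}(\rr d)$ to which $f_n$ converges in modulation norm.

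For (2), the same convolution estimate with $\phi _1=\phi _3=\phi$ and auxiliary $\phi _2$ gives $|V_\phi f|\lesssim |V_\phi f|*H$ with $H=|V_\phi \phi _2|\in L^1_{(v)}$. A second round of the weighted Young inequality reduces the claim to the coordinatewise inclusion $L^{\mabfp _1}_{\sigma ,(\omega _1)}\hookrightarrow L^{\mabfp _2}_{\sigma ,(\omega _2)}$ for functions locally dominated by a fixed convolution kernel; that in turn follows, via a standard amalgam-space decomposition against the integer lattice, from the true inclusion $\ell ^{\mabfp _1}\subseteq \ell ^{\mabfp _2}$ together with the weight comparison $\omega _2\lesssim \omega _1$.

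For (3), the Parseval-type reproducing formula
\begin{equation*}
\scal fg =\frac{1}{\nm \phi 2^2}\iint _{\rr {2d}}V_\phi f(x,\xi )\overline{V_\phi g(x,\xi )}\, dx\, d\xi ,
\end{equation*}
valid on $\maclS _{1/2}(\rr d)$, extends by H\"older on $L^{p,q}_{(\omega )}\times L^{p',q'}_{(1/\omega )}$ to a continuous dual pairing between $M^{p,q}_{(\omega )}$ and $M^{p',q'}_{(1/\omega )}$. When $p,q<\infty$, density of $\maclS _{1/2}$ in $M^{p,q}_{(\omega )}$ combined with Hahn--Banach applied to the closed subspace $\{V_\phi f:f\in M^{p,q}_{(\omega )}\}\subseteq L^{p,q}_{(\omega )}$ represents an arbitrary continuous functional by an element $\Phi \in L^{p',q'}_{(1/\omega )}$; STFT inversion writes $\Phi =V_\phi g$ for some $g\in M^{p',q'}_{(1/\omega )}$, completing the identification of the dual.
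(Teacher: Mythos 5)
The paper does not prove Proposition \ref{p1.4B} at all: it states that the results ``can be found in \cite{Fei1,FG1,FG2,Gc2,Toft5}'' and omits the proofs. Your proposal reconstructs precisely the standard argument from those references (essentially Chapters 11--12 of \cite{Gc2}): the change-of-window convolution inequality, weighted Young on the mixed-norm spaces in the Banach range $\mabfp \in [1,\infty ]^{2d}$, the amalgam-space mechanism for the inclusion in (2), and the Moyal/Parseval pairing for (3). So in spirit your route is the intended one, and parts (1) and (2) are essentially sound modulo the points below.

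Two steps need repair. First, in (1) your justification that $V_{\phi _1}\phi _2\in L^1_{(v)}$ ``is by definition membership of $\phi _2$ in $M^1_{(v)}$ with window $\phi _1$'' is circular as written: the definition of $M^1_{(v)}$ in this paper uses a fixed window in $\maclS _{1/2}\setminus 0$, and the claim that one may test membership with an arbitrary window $\phi _1\in M^1_{(v)}\setminus 0$ is exactly the window-independence statement being proved. The standard fix is a bootstrap: first establish the norm equivalence for windows in $\maclS _{1/2}$ (or a Gaussian), which settles $M^1_{(v)}$ and its window-independence for nice windows; then invoke the fact that $V_{\phi _1}\phi _2\in W(L^1_{(v)})\subseteq L^1_{(v)}$ whenever $\phi _1,\phi _2\in M^1_{(v)}$ (Proposition 12.1.2 in \cite{Gc2}); only then run the convolution estimate with general windows. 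Second, in (3) the final step ``STFT inversion writes $\Phi =V_\phi g$ for some $g\in M^{p',q'}_{(1/\omega )}$'' is false as stated: the Hahn--Banach extension produces an arbitrary $\Phi \in L^{p',q'}_{(1/\omega )}$, which need not lie in the range of $V_\phi$ (a proper closed subspace). The correct conclusion of the argument is to set $g=\nm \phi {L^2}^{-2}V_\phi ^*\Phi$, use boundedness of the synthesis operator $V_\phi ^*$ from $L^{p',q'}_{(1/\omega )}$ to $M^{p',q'}_{(1/\omega )}$, and observe that $\scal {V_\phi f}{\Phi}=\scal f{V_\phi ^*\Phi}$ recovers the functional on the dense subspace; then $V_\phi g$ is the projection of $\Phi$ onto the range, not $\Phi$ itself. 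With these two adjustments the proof goes through.
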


\par

Next we recall the notion of Gabor expansions. First we recall some
facts on sequences and lattices. In what follows we let $\Lambda$,
$\Lambda _1$ and $\Lambda _2$ be the lattices 
\begin{equation}\label{Lattices}
\Lambda _1 \equiv \{ x_j \} _{j\in J}  \equiv T_\theta \zz d ,
\quad
\Lambda _2 \equiv \{ \xi _k \} _{k\in J} \equiv  T_\vartheta \zz d ,
\quad
\Lambda \equiv \Lambda _1\times \Lambda _2
\end{equation}
where $\theta ,\vartheta \in \rrstar d$, and $J$ is an index set.

\par

\begin{defn}\label{DefAnSynGabOps}
Let $\Lambda$, $\Lambda _1$
and $\Lambda _2$ be as in \eqref{Lattices}. Let
$\omega ,v\in \mascP _E(\rr {2d})$ be such that
$\omega$ is $v$-moderate, and let $\phi ,\psi \in M^1_{(v)}(\rr d)$.
\begin{enumerate}
\item The analysis operator $C^{\Lambda}_\phi$ is the operator from
$M^\infty _{(\omega )}(\rr d)$ to $\ell ^{\infty}_{(\omega)}(\Lambda )$,
given by
$$
C^\Lambda _\phi f \equiv \{ V_\phi f(x_j,\xi _k) \} _{j,k\in J} \text ;
$$

\vrum

\item The synthesis operator $D^{\Lambda}_\psi$ is the operator from
$\ell ^\infty _{(\omega )}(\Lambda )$ to $M^\infty _{(\omega)}(\rr d)$,
given by
$$
D^\Lambda _\psi c \equiv \sum _{j,k\in J} c_{j,k}
e^{i\scal \cdo {\xi _k}}\phi (\cdo -x_j)\text ;
$$

\vrum

\item The Gabor frame operator $S^{\Lambda}_{\phi ,\psi}$
is the operator on $M^\infty _{(\omega )}(\rr d)$,
given by $D^\Lambda _\psi \circ
C^\Lambda _\phi$, i.{\,}e.
$$
S^{\Lambda}_{\phi ,\psi}f \equiv \sum _{j,k\in J} V_\phi f(x_j,\xi _k)
e^{i\scal \cdo {\xi _k}}\psi (\cdo -x_j).
$$
\end{enumerate}
\end{defn}

\par

It follows from the analysis in Chapters 11--14 in \cite{Gc2}
that the operators in Definition \ref{DefAnSynGabOps} are
well-defined and continuous.

\par

We finish the section by discussing some consequences of the following
result. The proof is omitted since the result follows from Theorem 13.1.1
in \cite{Gc2}, which in turn can be considered as a special case of Theorem
S in \cite{Gc1}.

\par

\begin{prop}\label{ThmS}
Let $v\in \mascP _E(\rr {2d})$ be submultiplicative, and $\phi \in
M^1_{(v)}(\rr d)\setminus 0$.
Then there is a constant $\ep _0>0$ such that for every $\ep \in
(0,\ep _0]$, the frame operator $S_{\phi ,\phi}^\Lambda$, with
$\Lambda =\ep \zz{2d}$ is a homeomorphism on $M^1_{(v)}(\rr d)$.
\end{prop}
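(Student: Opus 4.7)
The plan is to show that $S^\Lambda_{\phi,\phi}$ differs from a positive multiple of the identity by an operator of vanishingly small norm as $\ep\to 0^+$, and then invert via a Neumann series. First I would verify boundedness of $S^\Lambda_{\phi,\phi}$ on $M^1_{(v)}(\rr d)$: writing the frame operator as $D^\Lambda_\phi\circ C^\Lambda_\phi$, the hypothesis $\phi\in M^1_{(v)}$ together with the submultiplicativity of $v$ yields, via the reproducing formula for the short-time Fourier transform, continuity of both factors between $M^1_{(v)}(\rr d)$ and a suitable $\ell^1$-type weighted sequence space on $\Lambda$, with operator-norm bounds controlled by $\nm{V_\phi\phi}{L^1_{(v\otimes v)}}$ (finite by the definition of $M^1_{(v)}$) and a negative power of $\ep$.

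Second I would invoke Walnut's representation of the frame operator. For $\Lambda=\ep\zz{2d}$, Poisson summation applied to the modulation factors $e^{i\scal \cdo{\ep k}}$, $k\in\zz d$, rearranges
\begin{equation*}
S^\Lambda_{\phi,\phi}f(x) = c_\ep\sum_{n\in\zz d} G_n(x)\,f(x-\beta n/\ep),
\end{equation*}
where $G_n(x) = \sum_{m\in\zz d}\phi(x-\ep m)\,\overline{\phi(x-\ep m-\beta n/\ep)}$, $c_\ep\asymp \ep^{-d}$, and $\beta$ is a dimensional constant coming from the Fourier normalization. The $n=0$ term is multiplication by $c_\ep G_0$; applying Poisson summation a second time gives $G_0(x)=\ep^{-d}\sum_k \widehat{|\phi|^2}(\beta k/\ep)e^{i\scal x{\beta k/\ep}}$. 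The $k=0$ contribution is a positive constant proportional to $\nm\phi{L^2}^2$, while the amplitudes $\widehat{|\phi|^2}(\beta k/\ep)$ for $k\neq 0$ decay to zero as $\ep\to 0$, since $\phi\in M^1_{(v)}$ is enough to ensure the requisite integrability and decay of $\widehat{|\phi|^2}$.

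Third, the off-diagonal pieces ($n\neq 0$) involve products $\phi\cdot T_{\beta n/\ep}\overline\phi$ whose $M^1_{(v)}$-multiplier norms decay to zero summably in $n$ as $\ep\to 0$, because the tails of $V_\phi\phi$ are controlled in $L^1_{(v\otimes v)}$. Combining these two estimates yields a decomposition
\begin{equation*}
S^\Lambda_{\phi,\phi} = A_\ep\,I + R_\ep,
\end{equation*}
with $A_\ep>0$ of order $\ep^{-2d}$ and $\nm{R_\ep}{M^1_{(v)}\to M^1_{(v)}}/A_\ep\to 0$ as $\ep\to 0^+$. Choosing $\ep_0$ so that this quotient is $\le 1/2$ for every $\ep\le\ep_0$, a Neumann series produces a bounded inverse, so $S^\Lambda_{\phi,\phi}$ is a homeomorphism on $M^1_{(v)}(\rr d)$. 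The main obstacle is converting pointwise or integrated decay of $V_\phi\phi$ into \emph{operator-norm decay on the weighted space} $M^1_{(v)}$, a step routine on $L^2$ but in the $M^1_{(v)}$ setting requiring the submultiplicativity of $v$ to absorb the weight during the Gabor-frame bookkeeping.
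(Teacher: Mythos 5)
The paper gives no proof of this proposition at all --- it is quoted directly from Theorem 13.1.1 in \cite{Gc2} --- and your argument is essentially the standard proof of that cited result: the Walnut/Janssen representation of $S^{\Lambda}_{\phi ,\phi}$, identification of the dominant positive multiple of the identity coming from the $(n,k)=(0,0)$ term, and inversion by a Neumann series once the remainder is small relative to it. Your sketch is sound, including the one genuinely delicate point you flag at the end: plain pointwise decay of $\widehat{|\phi |^2}$ and of the products $\phi \cdot \overline{\phi (\cdot -2\pi n/\ep )}$ does not suffice on $M^1_{(v)}$, and one must expand each periodization $G_n$ in its Fourier series so that the whole error is controlled by the double sum $\sum _{(n,k)\neq (0,0)}|V_\phi \phi (2\pi n/\ep ,2\pi k/\ep )|\, v(2\pi n/\ep ,2\pi k/\ep )$, whose smallness for small $\ep$ follows from the $v$-weighted Wiener amalgam summability of $V_\phi \phi$, available precisely because $\phi \in M^1_{(v)}$ and $v$ is submultiplicative.
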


\par

We also recall the following result, and refer to the proof of Corollaries
12.2.5 and 12.2.6 in \cite{Gc2} for the proof.

\par

\begin{prop}
Let $v$, and $\phi$ and $\Lambda$ be the same as in Proposition
\ref{ThmS}, $\psi =(S_{\phi ,\phi}^\Lambda )^{-1}\phi$,
$f\in M^\infty _{(1/v)}(\rr d)$, $p,q\in [1,\infty ]$,and let
$\omega \in \mascP _E(\rr {2d})$ be $v$-moderate. Then
\begin{equation}\label{GabExp}
\begin{aligned}
f &= \sum _{(x_j,\xi _k)\in \Lambda} V_\phi f(x_j,\xi _k)
e^{i\scal \cdo {\xi _k}}\psi (\cdo -x_j)
\\[1ex]
&= \sum _{(x_j,\xi _k)\in \Lambda}
V_\psi f(x_j,\xi _k) e^{i\scal \cdo {\xi _k}}\phi (\cdo -x_j) ,
\end{aligned}
\end{equation}
where the sums converge in the weak$^*$ topology. Furthermore the
following conditions are equivalent.
\begin{enumerate}
\item $f\in M^{p,q}_{(\omega )}(\rr d)$;

\vrum

\item $\{ V_\phi f(x_j,\xi _k) \} _{(x_j,\xi _k)\in \Lambda} \in
\ell ^{p,q}_{(\omega )}(\Lambda )$;

\vrum

\item $\{ V_\psi f(x_j,\xi _k) \} _{(x_j,\xi _k)\in \Lambda} \in
\ell ^{p,q}_{(\omega )}(\Lambda )$.
\end{enumerate}

\par

Moreover, if {\rm{(1)}}--{\rm{(3)}} are true, then the sums in \eqref{GabExp}
converge in the weak$^*$ topology to elements in $M^{p,q}_{(\omega )}$
when $p=\infty$ or $q=\infty$, and unconditionally in norms when $p,q<\infty$.
\end{prop}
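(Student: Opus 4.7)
The plan is to bootstrap from the $M^1_{(v)}$-invertibility of $S := S^\Lambda_{\phi,\phi}$ supplied by Proposition \ref{ThmS} to the full range $p,q\in[1,\infty]$, by duality combined with the mapping properties of the analysis and synthesis operators of Definition \ref{DefAnSynGabOps}. Since $S$ is a homeomorphism of $M^1_{(v)}$, the canonical dual window $\psi = S^{-1}\phi$ lies in $M^1_{(v)}$. A direct computation from the definition of the frame operator shows that $S$ commutes with every lattice time--frequency shift $g\mapsto e^{i\scal{\cdot}{\xi_k}}g(\cdot-x_j)$, so $S^{-1}$ can be pulled past each atom to give, for every $g\in M^1_{(v)}$,
\[
g \;=\; S^{-1}Sg \;=\; \sum_{j,k} V_\phi g(x_j,\xi_k)\,e^{i\scal{\cdot}{\xi_k}}\psi(\cdot-x_j),
\]
with convergence in $M^1_{(v)}$-norm. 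Running the same argument with $\psi$ in place of $\phi$ (whose canonical dual is then $S_{\psi,\psi}^{-1}\psi = \phi$, since $S_{\psi,\psi} = S^{-1}$) yields the second expansion of \eqref{GabExp}.

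To reach $f\in M^\infty_{(1/v)} = (M^1_{(v)})'$ (Proposition \ref{p1.4B}(3)) I would carry out a pure duality lift: pair $f$ against an arbitrary $g\in M^1_{(v)}$, substitute the expansion of $g$ above, and interchange the absolutely convergent sum with the bracket. The pairings $\scal{f}{e^{i\scal{\cdot}{\xi_k}}\psi(\cdot-x_j)}$ are precisely the values of $V_\psi f$ on $\Lambda$ (up to phase), and symmetrically for $V_\phi f$, so both identities of \eqref{GabExp} hold in the weak$^*$-topology of $M^\infty_{(1/v)}$.

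For the equivalence (1)$\iff$(2)$\iff$(3), I would invoke the classical continuity of the lattice operators on modulation spaces with $v$-moderate weight. The implication (1)$\Rightarrow$(2) uses boundedness of $C^\Lambda_\phi\colon M^{p,q}_{(\omega)}\to \ell^{p,q}_{(\omega)}$, coming from a sampling estimate comparing $|V_\phi f(x_j,\xi_k)|$ with the $L^{p,q}_{(\omega)}$-norm of a short-time Fourier transform taken with a doubled window. Conversely, (2)$\Rightarrow$(1) follows by applying $D^\Lambda_\psi\colon \ell^{p,q}_{(\omega)}\to M^{p,q}_{(\omega)}$ (continuous because $\psi\in M^1_{(v)}$ and $\omega$ is $v$-moderate) to the coefficient sequence and invoking the expansion established in the previous step. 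The equivalence (1)$\iff$(3) is identical with the roles of $\phi$ and $\psi$ interchanged. Finally, when $p,q<\infty$, $\ell_0(\Lambda)$ is dense in $\ell^{p,q}_{(\omega)}$, so continuity of $D^\Lambda_\psi$ upgrades the weak$^*$ convergence to unconditional norm convergence; otherwise only the weak$^*$ conclusion is claimed.

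The main technical obstacle is the commutation of $S$ with the lattice shifts, together with the bookkeeping of phase factors that arise when such a shift is moved past a Gabor atom. This commutation is precisely what allows $S^{-1}$ to be absorbed into the Gabor sum and $\phi$ to be replaced by $\psi$; without it the whole strategy collapses. Once it is in place, the remainder of the argument is a standard combination of duality with the continuity of the analysis and synthesis operators on the scale of modulation spaces.
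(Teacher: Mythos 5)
Your proof is correct and coincides with the argument the paper relies on: the paper omits the proof entirely and refers to Corollaries 12.2.5 and 12.2.6 in \cite{Gc2}, whose proof is exactly your route (commutation of the frame operator with lattice time--frequency shifts, inversion on $M^1_{(v)}$ and $S_{\psi,\psi}=S^{-1}$, the duality lift to $M^\infty_{(1/v)}$, and the boundedness of $C^\Lambda_\phi$ and $D^\Lambda_\psi$ on the $M^{p,q}_{(\omega)}$ scale for $v$-moderate $\omega$). No gaps to report.
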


\par

Let $v$, $\phi$ and $\Lambda$ be as in Proposition \ref{ThmS}. Then
$$
(S_{\phi ,\phi}^\Lambda )^{-1}\phi
$$
is called the \emph{canonical dual window} to $\phi$, with respect to
$\Lambda$. By duality, it follows that $S_{\phi ,\phi}^\Lambda$
extends to to a continuous operator on $M^\infty _{(1/v)}(\rr d)$, and
$$
S_{\phi ,\phi}^\Lambda (e^{i\scal \cdo {\xi _k}}f(\cdo -x_j)) =
e^{i\scal \cdo {\xi _k}}(S_{\phi ,\phi}^\Lambda f)(\cdo -x_j),
$$
when $f\in M^\infty _{(1/v)}(\rr d)$ and $(x_j,\xi _k)\in \Lambda$.
The series in \eqref{GabExp}
are called \emph{Gabor expansions} of $f$ with respect to $\phi$ and
$\psi$.

\par

Now let $\mabfp =[1,\infty ]^{2d}$, $\sigma \in \operatorname S_{2d}$,
and let $\omega ,v\in \mascP _E(\rr {2d})$ be such that $\omega$ is
$v$-moderate, and choose $\phi$ and $\ep _0$ such that the conclusions in
Proposition \ref{ThmS} are true. Also let $f\in M^{\mabfp}
_{\sigma, (\omega )}(\rr d)$. Then the right-hand sides of \eqref{GabExp}
converge unconditionally in $M^{\mabfp} _{\sigma ,(\omega )}$ when $\max
\mabfp <\infty$, and in $M^\infty _{(\omega )}$ with respect
to the weak$^*$ topology when $\max \mabfp =\infty$.
(Cf. \cite{FG1,Gc2}.) For modulation spaces of the form
$M^{p,q}_{(\omega )}$ with $\omega$ belonging to the subset
$\mascP$ of $\mascP _E$, these properties were extended in \cite{GaSa} to
the quasi-Banach case, allowing $p$ and $q$ to be smaller than $1$.
In Section \ref{sec3} we extend all these properties to more
general $M^\mabfp _{\sigma ,(\omega )}$, where $\sigma \in
\operatorname S_{2d}$, $\omega \in \mascP _E$ and
$\mabfp \in (0,\infty ]^{2d}$, based on the analysis in \cite{GaSa}.

\par

\begin{rem}\label{RemThmS}
Let $r\in (0,1)$, $v\in \mascP _E(\rr {2d})$ be submultiplicative, and
set
\begin{equation}\label{SubMultMod}
(\Theta _\rho v)(x,\xi )=v(x,\xi )\eabs {x,\xi}^\rho ,
\quad \text{where}\quad \rho >2d(1-r)/r.
\end{equation}
Then $L^1_{(\Theta _\rho v)}(\rr {2d})$ is continuously embedded in
$L^r_{(v)}(\rr {2d})$, giving that $M^1_{(\Theta _\rho v)}(\rr d)
\subseteq M^r_{(v)}(\rr d)$. Hence if $\phi \in M^1_{(\Theta _\rho v)}
\setminus 0$, $\ep _0$ is chosen such that $S^\Lambda _{\phi ,\phi}$
is invertible on $M^1_{(\Theta _\rho v)}(\rr d)$ for every
$\Lambda =\ep \zz {2d}$, $\ep \in (0,\ep _0]$, it follows that both
$\phi$ and its canonical dual with respect to $\Lambda$ belong to
$M^r_{(v)}(\rr d)$.
\end{rem}

\par

%%%%%%%%%%%%%%%%%%%%%%
\section{Convolution estimates for Lebesgue and Wiener
spaces}\label{sec2}
%%%%%%%%%%%%%%%%%%%%%%

\par

In this section we deduce continuity properties for discrete, semi-discrete
and non-discrete convolutions. Especially we discuss such mapping
properties for sequence and Wiener spaces.

\par

In what follows we let $T_\theta$, $\theta \in \rrstar d$ be the diagonal
$d\times d$-matrix, with $\theta _1,\dots ,\theta _d$ as diagonal
values as in the previous section. The semi-discrete
convolution with respect to $\theta$ is given by
$$
(a*_{[\theta ]}f)(x) \sum _{j\in \zz d}a(j)f(x - T_\theta j),
$$
when $f\in \maclS _{1/2}'(\rr d)$ and $a \in \ell _0 (\zz d)$. 

\par

We have the following proposition.

\par

\begin{prop}\label{PropSemiContConvEst}
Let $\sigma \in \operatorname {S}_d$, $\omega ,v\in \mascP _E(\rr d)$ be
such that $\omega$ is $v$-moderate, $\theta ,\vartheta \in \rrstar d$, and let
$\mabfp ,\mabfr \in (0,\infty ]^d$ be such that $\theta _j = \vartheta _{\sigma (j)}$,
$j=1,\dots , d$, and 
$$
r_k\le \min _{m\le k}(1,p_m).
$$
Also let $v_\theta = v\circ T_\theta$.
Then the map $(a,f)\mapsto a*_{[\vartheta ]}f$ from $\ell _0(\zz d)\times
\maclS _{1/2}(\rr d)$ to $\maclS _{1/2}(\rr d)$ extends uniquely to a
linear and continuous map from $\ell ^{\mabfr}_{\sigma ,(v_\theta )}(\zz d)
\times L^{\mabfp}_{\sigma ,(\omega )}(\rr d)$ to $L^{\mabfp}
_{\sigma ,(\omega )}(\rr d)$, and
\begin{equation}\label{convest1}
\nm {a*_{[\vartheta ]}f}{L^{\mabfp}_{\sigma ,(\omega )}}\le
C\nm {a}{\ell ^{\mabfr}_{\sigma ,(v_\theta )}(\zz d)}\nm {f}{L^{\mabfp}
_{\sigma ,(\omega )}},
\end{equation}
where the constant $C$ is the same as in \eqref{moderate}.
\end{prop}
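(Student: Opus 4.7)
The approach proceeds in three stages: reduce to $\sigma$ being the identity permutation by a change of variables, reduce to the unweighted case by absorbing $\omega$ using its $v$-moderateness, and prove the resulting unweighted inequality by induction on the dimension $d$. For the first reduction I introduce $y_k := x_{\sigma(k)}$ and $k_n := j_{\sigma(n)}$. The hypothesis $\theta_k = \vartheta_{\sigma(k)}$ ensures $(T_\vartheta j)_{\sigma(k)} = \theta_k k_k$, so in the new coordinates the semi-discrete convolution becomes $\tilde a *_{[\theta]} \tilde f$ (with $\tilde a$, $\tilde f$ the natural pullbacks), and the $\sigma$-permuted mixed quasi-norms become identity-permuted ones; the definition $v_\theta = v\circ T_\theta$ is calibrated precisely so that the natural sequence weight coming from moderateness in the new coordinates matches the one in \eqref{convest1}. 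For the second reduction, applying $\omega(x) \le C\omega(x - T_\vartheta j)v(T_\vartheta j)$ pointwise inside the sum gives
\begin{equation*}
|(a *_{[\vartheta]} f)(x)|\, \omega(x) \le C\sum_{j\in \zz d} A(j)\, F(x - T_\vartheta j),
\end{equation*}
with $A(j):= |a(j)|\, v(T_\vartheta j)$ and $F := |f|\omega$. It therefore suffices to prove the unweighted mixed estimate $\nm{A *_{[\vartheta]} F}{L^{\mabfp}} \le \nm{A}{\ell^{\mabfr}} \nm{F}{L^{\mabfp}}$ in the identity-permuted case; this recovers \eqref{convest1} with the very same constant $C$.

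Now I induct on $d$. For $d = 1$ this is the standard quasi-Banach Young inequality for a semi-discrete convolution, valid because $r_1 \le \min(1, p_1)$. For $d \ge 2$, split $j = (j', j_d)$, $x = (x', x_d)$, $\theta = (\theta', \theta_d)$ and write $(A *_{[\theta]} F)(x) = \sum_{j_d \in \mathbf Z} G_{j_d}(x)$ with
\begin{equation*}
G_{j_d}(x) := H_{j_d}(x', x_d - \theta_d j_d), \qquad H_{j_d}(x', y_d) := \sum_{j' \in \zz{d-1}} A(j', j_d)\, F(x' - T_{\theta'} j', y_d).
\end{equation*}
For each fixed $y_d$, the map $x' \mapsto H_{j_d}(x', y_d)$ is a $(d-1)$-dimensional semi-discrete convolution, so the inductive hypothesis, followed by taking the $L^{p_d}(y_d)$ norm on the outside, yields $\nm{H_{j_d}}{L^{\mabfp}} \le \nm{A(\cdot , j_d)}{\ell^{\mabfr'}}\nm{F}{L^{\mabfp}}$ with $\mabfr' := (r_1, \dots, r_{d-1})$. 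Translation invariance in $x_d$ gives $\nm{G_{j_d}}{L^{\mabfp}} = \nm{H_{j_d}}{L^{\mabfp}}$. The outer sum over $j_d$ is then controlled by a generalized $r_d$-triangle inequality for the mixed quasi-norm,
\begin{equation*}
\Big\| \sum_n G_n \Big\|_{L^{\mabfp}}^{r_d} \le \sum_n \nm{G_n}{L^{\mabfp}}^{r_d}, \qquad \text{valid for } r_d \le \min(1, p_1, \dots, p_d),
\end{equation*}
which combined with the previous bound gives $\nm{A *_{[\theta]} F}{L^{\mabfp}}^{r_d} \le \nm{A}{\ell^{\mabfr}}^{r_d}\nm{F}{L^{\mabfp}}^{r_d}$ after summing over $j_d$.

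The main obstacle is establishing this generalized $r_d$-triangle inequality, which I verify as an auxiliary lemma by peeling off the norms from inside out: the $r_d$-triangle inequality in $L^{p_1}$ is valid since $r_d \le \min(1, p_1)$, and each subsequent $L^{p_k}$ (for $k \ge 2$) admits Minkowski's inequality in $L^{p_k/r_d}$ because $p_k / r_d \ge 1$. The bound $r_d \le \min(1, p_1, \dots, p_d)$ is precisely the $k = d$ case of the hypothesis $r_k \le \min_{m \le k}(1, p_m)$, and is exactly what makes this chain of manipulations admissible. The asymmetry in the hypothesis fits perfectly with the outer-in induction: using the \emph{smallest} exponent $r_d$ (which satisfies $r_d \le p_m$ for every $m$) in the outermost quasi-triangle step is essential, since attempting to use $r_1$ instead would fail because $r_1$ need not be bounded by $p_m$ for $m \ge 2$. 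Finally, density of $\ell_0(\zz d)$ in $\ell^{\mabfr}_{\sigma,(v_\theta)}(\zz d)$ when $\max \mabfr < \infty$, together with a direct monotone-convergence argument otherwise, extends the estimate from finitely supported sequences to all of $\ell^{\mabfr}_{\sigma,(v_\theta)}(\zz d)$.
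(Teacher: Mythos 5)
Your proof is correct, but it takes a genuinely different route from the paper's. The paper runs an \emph{inside-out} induction on the coordinate index $k$: it introduces intermediate quantities $F_k$, $G_k$, $A_k$, $H_k$ by alternately taking $\ell ^{p_k}$-norms over $\mathbf Z$ and $L^{p_k}$-norms over a period cell, and the inductive claim \eqref{HkykEst} is a \emph{pointwise} semi-discrete convolution bound
$H_{k,y_k}(l_k)\le \big ((A_k^{r_k}*_{[\vartheta _k]}F_{k,y_k}^{r_k})(l_k)\big )^{1/r_k}$
in the remaining discrete variables, established with a case distinction ($p_n/r_{n-1}\ge 1$ versus $<1$) at every stage, using Minkowski's and Young's inequalities in the first case and the algebra property of $\ell ^{q}$ for $q<1$ in the second. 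You instead run an \emph{outside-in} induction on the dimension: you split off the last discrete index $j_d$, apply the $(d-1)$-dimensional statement to each slice to obtain a genuine norm bound $\nm {G_{j_d}}{L^{\mabfp}}\le \nm {A(\cdo ,j_d)}{\ell ^{(r_1,\dots ,r_{d-1})}}\nm F{L^{\mabfp}}$, and resum via the generalized $r_d$-triangle inequality for the full mixed quasi-norm, whose proof is the single place where an inside-out peeling (with one case distinction, at the innermost level) is still required. The hypotheses are consumed in the same way in both arguments --- your inductive step uses $r_k\le \min _{m\le k}(1,p_m)$ for $k\le d-1$ and the resummation uses the $k=d$ case, and your remark about why $r_d$ rather than $r_1$ must appear in the outermost step is exactly the right explanation of the asymmetric hypothesis. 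Both arguments yield \eqref{convest1} with the constant coming only from the moderateness step. What your version buys is a cleaner structure with no repeated case analysis, isolating the quasi-subadditivity of $L^{\mabfp}$ as a reusable lemma; what the paper's version buys is the sharper intermediate pointwise estimates \eqref{HkykEst}, which are not needed for the statement itself. The reductions to $\sigma =\mathrm{id}$ and to $\omega =v=1$, and the final density argument, coincide with the paper's (and note that $\max \mabfr \le 1$ always holds under the hypotheses, so $\ell _0$ is automatically dense and your separate monotone-convergence fallback is not needed).
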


\par

\begin{proof}
We only consider the case $\max \mabfp <\infty$. The modifications
to the case when at least one $p_j$ equals $\infty$ is straight-forward
and is left for the reader.

\par

Let $h$ be defined by
$$
h_\omega (x_{\sigma (1)},\dots ,x_{\sigma (d)}) = (|a| *
_{[\vartheta ]} |f|)(x)\omega (x).
$$
Then it follows by straight-forward computations that
$$
h_\omega \le C|b_{v_\theta } |*_{[\theta ]}|g_\omega |,
$$
where $b_{v_\theta }$ is given by \eqref{bomegadef} with
$\omega =v_\theta $ and $\Lambda =\zz d$, and $g_\omega $
is given by \eqref{gomegadef}. Since
\begin{gather*}
\nm a{\ell ^\mabfr _{\sigma ,(v_\theta )} (\zz d)} =
\nm {b_{v_\theta}}{\ell ^\mabfr (\zz d)},
\quad
\nm f{L^\mabfp _{\sigma ,(\omega )}} = \nm {g_\omega}{L^\mabfp } 
\intertext{and}
\nm {a*_{[\vartheta ]}f}{L^\mabfp _{\sigma ,(\omega )}}
\le \nm {h_\omega}{L^\mabfp },
\end{gather*}
it follows that we may assume that $f$ and $a$ are non-negative,
$\sigma$ is the identity map and that $\omega = v = 1$, giving
that $\vartheta =\theta$.

\par

For $x\in \rr d$ and $j\in \zz d$, we let
$$
y_k = (x_{k+1}, \dots ,x_d)\in \rr {d-k},\quad
l_k = (j_{k+1},\dots ,j_d)\in \zz {d-k},\quad
$$
$k=1,\dots ,d-1$. Also let
\begin{align*}
F_x(j) &= f(x+T_\theta j),\quad G_{1,x}(l_1) = G_1(x,l_1)
\equiv \nm {F_x(\cdo ,l_1)}{\ell ^{p_1}(\mathbf Z)},
\\[1ex]
F_{1,y_1}(l_1) &= F_1(y_1,l_1) \equiv \nm {G_1(\cdo ,y_1,l_1)}
{L^{p_1}[0,\theta _1]},
\intertext{and define inductively}
G_{k,y_{k-1}}(l_k) &= G_k(y_{k-1},l_k)
\equiv \nm {F_{k-1,y_{k-1}}(\cdo ,l_k)}{\ell ^{p_k}(\mathbf Z)},
\\[1ex]
F_{k,y_k}(l_k) &= F_k(y_k,l_k) \equiv \nm {G_k(\cdo ,y_k,l_k)}
{L^{p_k}[0,\theta _k]}, \quad k=2,\dots ,d-1,
\\[1ex]
G_d(x_d) &\equiv \nm {F_{d-1,x_d}}{\ell ^{p_d}(\mathbf Z)},
\quad \text{and}\quad 
F_d \equiv \nm {G_d}{L^{p_d}[0,\theta _d]}. 
\end{align*}
In the same way, let
\begin{align*}
A_1(l_1) &\equiv \nm {a(\cdo ,l_1)}{\ell ^{r_1}(\mathbf Z)}
\\[1ex]
A_k(l_k) &= \nm {A_{k-1}(\cdo ,l_k)}
{\ell ^{r_k}(\mathbf Z)},\quad k=2,\dots ,d-1,
\intertext{and}
A_d &= \nm {A_{d-1}}{\ell ^{r_d}(\mathbf Z)},
\end{align*}
Finally, let $H_{k,y_k}(l_k) = H_k (y_k ,l_k)$ and $H_d$
be the same as $F_{k,y_k}(l_k)$ and $F_d$, respectively,
$k=1,\dots ,d-1$, after
$f$ has been replaced by $a*_{[\theta ]}f$. By straight-forward
computations it follows that
$$
\nm {f}{L^{\mabfp}} = F _d,\quad \nm {a*_{[\theta ]}f}
{L^{\mabfp}} = H _d\quad  \text{and}\quad
\nm {a}{\ell ^{\mabfr}} = A_d.
$$

\par

We claim that
\begin{equation}\label{HkykEst}
\begin{aligned}
H_{k,y_k}(l_k) &\le \big ( (A _k^{r_k}*_{[\vartheta _k]}
F_{k,y_k}^{r_k})(l_k) \big ) ^{1/r_k},
\quad k=1,\dots ,d-1,
\\[1ex]
H_d&\le A_dF_d,
\end{aligned}
\end{equation}
where $\vartheta _k = (\theta _{k+1},\dots ,\theta _d)$.

\par

In fact, first assume that $k=1$. We have
\begin{align}
H_{1,y_1}(l_1) &= \left ( \int _0^{\theta _1} J(x_1,y_1,l_1)\, dx_1
\right )^{1/p_1},\label{H1formula}
\intertext{where}
J(x_1,y_1,l_1) &= \sum _{j_1\in \mathbf Z} \left (
\sum _{m\in \zz {d-1}} \big (a(\cdo ,m) *_{\theta _1}
F_x(\cdo ,T_{\vartheta _1}(l_1-m))\big )(j_1) \right )^{p_1}.\notag
\end{align}
Here $*_{\theta _k}=*_{[\theta _k]}$ denotes the one-dimensional
semi-discrete convolution with respect to $\theta _k$.
We shall consider the cases $p_1\ge 1$ and $p_1<1$
separately, and start to consider the former one.

\par

Therefore, assume that $p_1\ge 1$. By applying Minkowski's
inequality on $J(x_1,y_1,l_1)$ we get
\begin{multline*}
\sum _{j_1\in \mathbf Z} \left (
\sum _{m\in \zz {d-1}} \big (a(\cdo ,m) *_{\theta _1}
F_x(\cdo ,T_{\vartheta _1}(l_1-m))\big )(j_1) \right )^{p_1}
\\[1ex]
\le
\left (
\sum _{m\in \zz {d-1}} \nm {a(\cdo ,m) *_{\theta _1}
F_x(\cdo ,T_{\vartheta _1}(l_1-m))}{\ell ^{p_1}} \right )^{p_1}
\\[1ex]
\le
\left (
\sum _{m\in \zz {d-1}} A_1(m)G_{1}(x,T_{\vartheta _1}(l_1-m))
\right )^{p_1}.
\end{multline*}
By using this estimate in \eqref{H1formula} we get
\begin{multline*}
H_{1,y_1}(l_1) \le  \left ( \int _0^{\theta _1} \left (
\sum _{m\in \zz {d-1}} A_1(m)G_{1}(x,T_{\vartheta _1}(l_1-m))
\right )^{p_1}\, dx_1 \right )^{1/p_1}
\\[1ex]
\le \sum _{m\in \zz {d-1}} A_1(m)F_{1}(y_1,T_{\vartheta _1}(l_1-m))
\le (A_1*_{[\vartheta _1]}F_{1,y_1})(l_1),
\end{multline*}
and \eqref{HkykEst} follows in the case $k=1$ and $p_1\ge 1$.

\par

Next assume that $p_1<1$. Then we get
\begin{multline*}
\sum _{j_1\in \mathbf Z} \left (
\sum _{m\in \zz {d-1}} \big (a(\cdo ,m) *_{\theta _1}
F_x(\cdo ,T_{\vartheta _1}(l_1-m))\big )(j_1) \right )^{p_1}
\\[1ex]
\le \sum _{m\in \zz {d-1}}  \sum _{j_1\in \mathbf Z}  
\left ( \big (a(\cdo ,m) *_{\theta _1}
F_x(\cdo ,T_{\vartheta _1}(l_1-m))\big )(j_1) \right )^{p_1}
\\[1ex]
\le
\sum _{m\in \zz {d-1}} A_1(m)^{p_1}G_{1}(x,T_{\vartheta
_1}(l_1-m)) ^{p_1}.
\end{multline*}
By using this estimate in \eqref{H1formula} we get
\begin{multline*}
H_{1,y_1}(l_1) \le  \left ( \int _0^{\theta _1}
\sum _{m\in \zz {d-1}} A_1(m)^{p_1}G_{1}(x,T_{\vartheta _1}(l_1-m))
^{p_1}\, dx_1 \right )^{1/p_1}
\\[1ex]
\le
\left (\sum _{m\in \zz {d-1}} A_1(m)^{p_1}\int _0^{\theta _1}
G_{1}(x,T_{\vartheta _1}(l_1-m))^{p_1}\, dx_1\right ) ^{1/p_1}
\\[1ex]
\le \big ( (A_1^{p_1}*_{[\vartheta _1]}F_{1,y_1}^{p_1})(l_1)\big )^{1/p_1},
\end{multline*}
and \eqref{HkykEst} follows in the case $k=1$ for any
$p_1\in (0,\infty ]$.

\par

Next we assume that \eqref{HkykEst} holds for $k< n$, where
$1\le n\le d$, and prove the result for $k=n$. The relation
\eqref{HkykEst} then follows by induction.

\par

First we consider the case $q_n\equiv p_n/r_{n-1}\ge 1$. Set $y=y_{n-1}
=(x_n,\dots ,x_d))$. Then $r_n=r_{n-1}$, and the inductive
assumption together with Minkowski's and Young's inequalities
give
\begin{multline*}
\sum _{j_n\in \mathbf Z} \left ( H_{n-1,y_{n-1}}(j_n,l_n) \right )^{p_n}
\\[1ex]
\le \sum _{j_n\in \mathbf Z} \left (
\sum _{m\in \zz {d-n}} \big (A_{n-1}(\cdo ,m)^{r_{n-1}} *_{\theta _n}
F_{n-1,x}(\cdo ,T_{\vartheta _n}(l_n-m))\big )(j_n)^{r_{n-1}} \right )^{q_n}
\\[1ex]
\le
\left (
\sum _{m\in \zz {d-n}} \nm {A_{n-1}(\cdo ,m)^{r_{n-1}} *_{\theta _n}
F_{n-1,x}(\cdo ,T_{\vartheta _n}(l_n-m))^{r_{n-1}}}{\ell ^{q_n}}
\right )^{q_n}
\\[1ex]
\le
\left (
\sum _{m\in \zz {d-n}} \nm {A_{n-1}(\cdo ,m)^{r_{n-1}} }{\ell ^1}
\nm {F_{n-1,x}(\cdo ,T_{\vartheta _n}(l_n-m))^{r_{n-1}}}{\ell ^{q_n}}
\right )^{q_n}
\\[1ex]
=
\left (
\sum _{m\in \zz {d-n}} \nm {A_{n-1}(\cdo ,m)} {\ell ^{r_{n-1}}}^{r_{n-1}}
\nm {F_{n-1,x}(\cdo ,T_{\vartheta _n}(l_n-m))}{\ell ^{p_n}}^{r_{n-1}}
\right )^{q_n}
\\[1ex]
\le
\left (
\sum _{m\in \zz {d-n}} A_{n}(m)^{r_{n}}
\nm {F_{n-1,x}(\cdo ,T_{\vartheta _n}(l_n-m))}{\ell ^{p_{n}}}^{r_{n}}
\right )^{p_n/r_n}
\\[1ex]
\le
\left (
\sum _{m\in \zz {d-n}} A_{n}(m)^{r_{n}}
G_{n,x}(T_{\vartheta _n}(l_n-m))^{r_{n}}
\right )^{p_n/r_n},
\end{multline*}
where the last inequality follows from the facts that $r_n=r_{n-1}$ when
$p_n/p_{n-1}\ge 1$. Minkowski's inequality now gives
\begin{multline*}
H_{n,y_{n}}(l_n) = 
\left ( \int _0^{\theta _n}\sum _{j_n\in \mathbf Z} \left ( H_{n-1,y_{n-1}}(j_n,l_n)
\right )^{p_n} \, d{x_n} \right ) ^{1/p_n}
\\[1ex]
\le 
\left ( \int _0^{\theta _n} \left ( \sum _{m\in \zz {d-n}} A_{n}(m)^{r_{n}}
G_{n}(x_n,y_n,T_{\vartheta _n}(l_n-m))^{r_{n}}
\right )^{p_n/r_n}\, d{x_n} \right ) ^{1/p_n}
\\[1ex]
\le
\left ( \sum _{m\in \zz {d-n}} A_{n}(m)^{r_{n}}
\nm {G_{n}(\cdo ,y_n,T_{\vartheta _n}(l_n-m))}{L^{p_n/r_n}[0,\theta _n]}^{r_{n}}
\right )^{1/r_n},
\end{multline*}
which gives \eqref{HkykEst} for $k=n$ in this case.

\par

Next we consider the case when $q_n=p_n/r_{n-1}< 1$.
Then $r_n=p_{n}$, and the inductive assumption together with
Minkowski's inequality, Young's inequality and the fact that $\ell ^{q_n}$
is an algebra under convolution, give
\begin{multline*}
\sum _{j_n\in \mathbf Z} \left ( H_{n-1,y_{n-1}}(j_n,l_n) \right )^{p_n}
\\[1ex]
\le \sum _{j_n\in \mathbf Z} \left (
\sum _{m\in \zz {d-n}} \big (A_{n-1}(\cdo ,m)^{r_{n-1}} *_{\theta _n}
F_{n-1,y}(\cdo ,T_{\vartheta _n}(l_n-m))\big )(j_n)^{r_{n-1}} \right )^{p_n/r_{n-1}}
\\[1ex]
\le  \sum _{m\in \zz {d-n}} \sum _{j_n\in \mathbf Z}
\left ( \big (A_{n-1}(\cdo ,m)^{r_{n-1}} *_{\theta _n}
F_{n-1,y}(\cdo ,T_{\vartheta _n}(l_n-m))\big )(j_n)^{r_{n-1}}
\right )^{p_n/r_{n-1}}
\\[1ex]
\le  \sum _{m\in \zz {d-n}} 
\left ( \nm {A_{n-1}(\cdo ,m)^{r_{n-1}}}{\ell ^{p_n/r_{n-1}}} \nm 
{F_{n-1,y}(\cdo ,T_{\vartheta _n}(l_n-m))^{r_{n-1}}}{\ell ^{p_n/r_{n-1}}}
\right )^{p_n/r_{n-1}}
\\[1ex]
=
\sum _{m\in \zz {d-n}} 
\nm {A_{n-1}(\cdo ,m)}{\ell ^{p_n}} ^{p_n}\nm 
{G_{n,y}(T_{\vartheta _n}(l_n-m))}{\ell ^{p_n}}^{p_n}
\\[1ex]
=
\sum _{m\in \zz {d-n}} 
A_{n}(m)^{r_n}\nm 
{G_{n,(x_n,y_n)}(T_{\vartheta _n}(l_n-m))}{\ell ^{p_n}}^{r_n}.
\end{multline*}
This gives
\begin{multline*}
H_{n,y_{n}}(l_n)
\le
\left ( \int _0^{\theta _n}\sum _{j_n\in \mathbf Z} \left ( H_{n-1,y_{n-1}}(j_n,l_n)
\right )^{p_n} \, d{x_n} \right ) ^{1/p_n}
\\[1ex]
\le 
\left ( \int _0^{\theta _n}
\sum _{m\in \zz {d-n}} 
A_{n}(m)^{r_n}\nm 
{G_{n,(x_n,y_n)}(T_{\vartheta _n}(l_n-m))}{\ell ^{p_n}}^{p_n}
\, d{x_n} \right ) ^{1/p_n}
\\[1ex]
=
\left ( 
\sum _{m\in \zz {d-n}} 
A_{n}(m)^{r_n} \int _0^{\theta _n} \nm 
{G_{n,(x_n,y_n)}(T_{\vartheta _n}(l_n-m))}{\ell ^{p_n}}^{p_n}
\, d{x_n} \right ) ^{1/p_n}
\\[1ex]
=
\left ( 
\sum _{m\in \zz {d-n}} 
A_{n}(m)^{r_n}
F_{n,y_n}(l_n-m)^{p_n} \right ) ^{1/p_n}
\\[1ex]
=
\left ( 
\sum _{m\in \zz {d-n}} 
A_{n}(m)^{r_n}
F_{n,y_n}(l_n-m)^{r_n} \right ) ^{1/r_n}.
\end{multline*}
This gives \eqref{HkykEst} in this case as well. Hence \eqref{HkykEst}
holds for any $n\le d$.

\par

By choosing $n=d$ in \eqref{HkykEst} it follows that $a*_{[\theta ]}f$
is uniquely defined and satisfies \eqref{convest1} when
$a\in \ell _0(\zz d)$ and $f\in L^{\mabfp}_{\sigma ,(\omega )}(\rr d)$.
Since $\ell _0$ is dense in $\ell ^{\mabfr}_{\sigma ,(v_\theta )}$, the
result now follows for general $a\in \ell ^{\mabfr}_{\sigma ,
(v_\theta )}(\zz d)$. The proof is complete.
\end{proof}

\par

By choosing $\theta _1=\cdots =\theta _d=1$ and $f$ to be constant
on each open cube $j+(0,1)^d$ in the previous proposition, we get
the following extension of Lemma 2.7 in \cite{GaSa}. The
details are left for the reader.

\par

\begin{cor}
Let $\sigma \in \operatorname {S}_d$, $\omega ,v\in \mascP _E(\rr d)$ be
such that $\omega$ is $v$-moderate, and let
$\mabfp ,\mabfr \in (0,\infty ]^d$ be such that
$$
r_k\le \min _{m\le k}(1,p_m).
$$
Then the map $(a,b)\mapsto a*b$ on $\ell _0(\zz d)$ extends
uniquely to a linear and continuous map from $\ell ^{\mabfr}
_{\sigma ,(v)}(\zz d) \times \ell ^{\mabfp}_{\sigma ,
(\omega )}(\zz d)$ to $\ell ^{\mabfp} _{\sigma ,(\omega )}(\zz d)$.
In particular,
\begin{equation}\label{convest1A}
\nm {a*b}{\ell ^{\mabfp}_{\sigma ,(\omega )}}\le
C\nm {a}{\ell ^{\mabfr}_{\sigma ,(v )}}\nm {b}{\ell ^{\mabfp}
_{\sigma ,(\omega )}},
\end{equation}
for some constant $C$ which is independent of $a\in \ell
^{\mabfr}_{\sigma ,(v)}(\zz d)$ and $b\in \ell ^{\mabfp}
_{\sigma ,(\omega )}(\zz d)$.
\end{cor}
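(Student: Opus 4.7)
The plan is to reduce the assertion to the semi-discrete case of Proposition \ref{PropSemiContConvEst} via a step-function construction, exactly as the remark preceding the corollary suggests. In the proposition, I specialise $\theta = \vartheta = (1,\dots ,1)$, so that $T_\theta$ is the identity, $v_\theta = v$, and the hypothesis $\theta _j = \vartheta _{\sigma (j)}$ is automatic for any permutation $\sigma$.

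To a sequence $b$ on $\zz d$ I associate the step function
\begin{equation*}
f_b(x) \equiv \sum _{k\in \zz d} b(k)\chi _{k+[0,1)^d}(x),
\end{equation*}
and I first establish a two-sided estimate
\begin{equation*}
C_1 \nm {b}{\ell ^{\mabfp}_{\sigma ,(\omega )}}\le \nm {f_b}{L^{\mabfp}_{\sigma ,(\omega )}} \le C_2 \nm {b}{\ell ^{\mabfp}_{\sigma ,(\omega )}},
\end{equation*}
with constants independent of $b$. This is where moderateness and local boundedness of the weights enter: since $v\in L^\infty _{\loc}$, there is $M>0$ with $v(y)\le M$ on $[-1,1]^d$; combined with \eqref{moderate} this yields $C^{-1}M^{-1}\omega (k)\le \omega (x)\le CM\omega (k)$ for all $x\in k+[0,1)^d$, and substituting into the iterated $L^{p_j}$-norms on the unit intervals gives the comparison for every $\mabfp \in (0,\infty ]^d$ and $\sigma \in \operatorname{S}_d$. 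An analogous estimate holds for the sequence weight $v$, but of course $v$ is already submultiplicative, so the comparison $\nm {f_a}{L^{\mabfr}_{\sigma ,(v)}}\asymp \nm {a}{\ell ^{\mabfr}_{\sigma ,(v)}}$ is of the same nature.

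Next I compute the semi-discrete convolution with $a\in \ell _0(\zz d)$:
\begin{equation*}
(a*_{[1]}f_b)(x) = \sum _{j\in \zz d} a(j) \sum _{k\in \zz d} b(k)\chi _{k+[0,1)^d}(x-j) = \sum _{m\in \zz d}(a*b)(m)\chi _{m+[0,1)^d}(x),
\end{equation*}
i.e.\ $a*_{[1]}f_b = f_{a*b}$. Applying Proposition \ref{PropSemiContConvEst} and the two-sided comparison above,
\begin{equation*}
\nm {a*b}{\ell ^{\mabfp}_{\sigma ,(\omega )}}\lesssim \nm {f_{a*b}}{L^{\mabfp}_{\sigma ,(\omega )}} = \nm {a*_{[1]}f_b}{L^{\mabfp}_{\sigma ,(\omega )}}\lesssim \nm {a}{\ell ^{\mabfr}_{\sigma ,(v)}}\nm {f_b}{L^{\mabfp}_{\sigma ,(\omega )}}\lesssim \nm {a}{\ell ^{\mabfr}_{\sigma ,(v)}}\nm {b}{\ell ^{\mabfp}_{\sigma ,(\omega )}},
\end{equation*}
which is \eqref{convest1A} for $a,b\in \ell _0(\zz d)$. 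The unique continuous extension to the whole product space then follows as in the proposition, either by density of $\ell _0$ (when $\max \mabfr ,\max \mabfp <\infty$) or, in the presence of infinite exponents, by a direct monotone-convergence argument applied to the non-negative sequences $|a|,|b|$, since the inequality \eqref{convest1A} ensures absolute convergence of $a*b$ pointwise.

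The main obstacle I anticipate is purely notational: one must keep careful track of the order in which the mixed norms are taken, so as to match the step-function comparison with the correct permutation $\sigma$. Once this bookkeeping is done and the comparison between the iterated $L^{p_j}$-norms over $[0,1)$-intervals and the iterated $\ell ^{p_j}(\mathbf Z)$-norms is set up, the rest is a direct application of Proposition \ref{PropSemiContConvEst}.
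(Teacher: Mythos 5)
Your proposal is correct and follows exactly the route the paper indicates: specialising Proposition \ref{PropSemiContConvEst} to $\theta =\vartheta =(1,\dots ,1)$ and taking $f$ to be the step function constant on each unit cube, with the two-sided comparison between $\nm {f_b}{L^{\mabfp}_{\sigma ,(\omega )}}$ and $\nm {b}{\ell ^{\mabfp}_{\sigma ,(\omega )}}$ supplied by the moderateness of $\omega$. You have simply written out the details that the paper leaves to the reader (the only superfluous step is introducing $f_a$, since the proposition already accepts the sequence $a$ directly).
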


\par

For the link between modulation spaces and sequence spaces we
need to consider a broad family of Wiener spaces.

\par

\begin{defn}\label{DefWienderSpace}
Let $\omega \in \mascP _E(\rr d)$, $\mabfp \in (0,\infty ]^d$,
$q\in [1,\infty]$, $\sigma \in \operatorname S _d$, and let
$\chi$ be the characteristic function of $Q\equiv [0,1]^d$.
Then the \emph{Wiener space} $\sfW ^q(\omega ,\ell
^{\mabfp}_\sigma (\zz d))$ consists of all measurable functions
$f$ on $\rr d$ such that
$$
\nm f{\sfW ^q(\omega ,\ell ^{\mabfp}_\sigma )}\equiv
\nm {b_{f,\omega ,q}}{\ell ^{\mabfp}_\sigma},
$$
is finite, where $b_{f,\omega}$ is the sequence on $\zz d$,
given by
$$
b_{f,\omega}(j)  \equiv \nm {f}{L^q(j+Q)}\omega (j)
= \nm {f\cdot \chi (\cdo -j)}{L^q}\omega (j).
$$
\end{defn}

\par

Especially $\sfW ^\infty (\omega ,\ell ^{\mabfp}_\sigma )$ in Definition
\ref{DefWienderSpace} is important (i.{\,}e. the case
$q=\infty$), and we set
$$
\sfW (\omega ,\ell ^{\mabfp}_\sigma (\zz d)) =
\sfW ^\infty (\omega ,\ell ^{\mabfp}_\sigma (\zz d)).
$$
This space is also called the \emph{coorbit space} of
$L^{\mabfp}_{\sigma}(\rr d)$ with weight $\omega$, and is
sometimes denoted by
$$
\Co (L^{\mabfp}_{\sigma ,(\omega )}(\rr d))\quad \text{or}\quad 
W(L_{\sigma ,(\omega)} ^{\mabfp}) = W(L_{\sigma ,(\omega)}
^{\mabfp}(\rr d)),
$$
in the literature (cf. \cite{Gc2,Rau2}).

\par

We also use the notation
\begin{alignat*}{3}
&\sfW ^q(\ell ^{\mabfp}_\sigma (\zz d))
&\quad &\text{and}\quad
&\sfW (\ell ^{\mabfp}_\sigma (\zz d))
\intertext{instead of}
&\sfW ^q(\omega ,\ell ^{\mabfp}_\sigma (\zz d))
&\quad &\text{and}\quad
&\sfW (\omega ,\ell ^{\mabfp}_\sigma (\zz d)),
\end{alignat*}
respectively, when $\omega =1$.

\par

We have now the following lemma concerning pullbacks of dilations in
Wiener spaces. Here we let $\lfloor  x\rfloor$ denote the integer part of $x$.

\par

\begin{lemma}\label{Wienerdil}
Let $R\ge 1$, $\sigma \in \operatorname {S}_d$, $\omega \in
\mascP _E(\rr d)$, $\theta \in  (0,R]^d$, $\mabfp \in (0,\infty ]^d$,
$q\in (0,\infty]$, and let $f\in {\sfW ^q(\omega ,\ell ^{\mabfp}_\sigma (\zz d))}$.
Then
\begin{align*}
T_\theta ^*f &\in \sfW ^q(T_\theta ^*\omega ,
\ell ^{\mabfp}_\sigma (\zz d)),
\intertext{and}
\nm {T_\theta ^*f}{{\sfW ^q(T_\theta ^*\omega ,\ell ^{\mabfp}_\sigma)}}
&\le
C \left (\prod _{k=1}^d|\theta _k| ^{-1/q} \lfloor 1+|\theta _k|^{-1}\rfloor
^{1/p_{\sigma (k)}}\right ) \nm {f}{{\sfW ^q(\omega ,\ell ^{\mabfp}_\sigma )}},
\end{align*}
for some constant $C$ which only depends on $\omega$ and $R$.
\end{lemma}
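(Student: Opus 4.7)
The plan is to control the Wiener sequence $b_{T_\theta^*f,\,T_\theta^*\omega}$ pointwise by a discrete averaging operator applied to $b_{f,\omega}$, and then to deduce the $\ell^{\mabfp}_\sigma$-estimate via an iterated one-dimensional convolution argument. A change of variables gives
\[
\nm{T_\theta^*f}{L^q(j+Q)} \;=\; \Bigl(\prod_{k=1}^d |\theta_k|^{-1/q}\Bigr)\, \nm{f}{L^q(T_\theta j + T_\theta Q)},
\]
which already explains the first factor in the desired bound. Since $|\theta_k|\le R$, the box $T_\theta j + T_\theta Q$ is covered by at most $C_{R,d}$ unit integer cubes $l+Q$ with $l\in I_j$. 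Combining the $q$-subadditivity of the $L^q$ quasi-norm with the $v$-moderateness of $\omega$ for a suitable submultiplicative $v$ (depending only on $\omega$), together with the uniform bound $|T_\theta j - l|\le C_d R$ for $l\in I_j$, one arrives at
\[
b_{T_\theta^*f,\,T_\theta^*\omega}(j)\;\le\; C_{\omega,R}\,\Bigl(\prod_{k=1}^d |\theta_k|^{-1/q}\Bigr) \sum_{l\in I_j} b_{f,\omega}(l).
\]

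The indicator relation $l\in I_j$ factorises coordinate-wise: $l\in I_j$ iff $l_k\in I^{(k)}_{j_k}$ for every $k$, where $I^{(k)}_{j_k}$ is a set of at most $\lfloor R\rfloor+2$ consecutive integers near $\theta_k j_k$. Dually, for each fixed $l_k$ the set of $j_k$ with $l_k\in I^{(k)}_{j_k}$ lies in an interval of length $|\theta_k|^{-1}+1$, so its cardinality is bounded by $N_k\equiv\lfloor 1+|\theta_k|^{-1}\rfloor+1$. Thus the pointwise estimate is an iterated one-dimensional convolution with a $\{0,1\}$-valued kernel whose row sums are bounded by a constant depending only on $R$, and whose column sums in coordinate $k$ are bounded by $N_k$.

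I would then apply, in the order prescribed by $\sigma$, the following one-dimensional lemma: if $K\colon\mathbf Z\times\mathbf Z\to\{0,1\}$ satisfies $\sum_l K(j,l)\le M$ and $\sum_j K(j,l)\le N$, then $c\mapsto Kc$ is bounded on $\ell^p(\mathbf Z)$ with norm at most $C(M)\,N^{1/p}$ for every $p\in(0,\infty]$. For $p\ge 1$ this is Minkowski combined with Jensen; for $p<1$ it follows from $K(j,l)^{1/p}=K(j,l)$ together with the $p$-subadditivity of $\nm{\cdot}{\ell^p}^p$, giving
\[
\nm{Kc}{\ell^p}^p\;\le\; \sum_j\sum_l K(j,l)\,c(l)^p\;\le\; N\,\nm c{\ell^p}^p.
\]
Iterating this inequality through the sequential definition of $\nm{\cdot}{\ell^{\mabfp}_\sigma}$ produces the factor $\prod_k \lfloor 1+|\theta_k|^{-1}\rfloor^{1/p_{\sigma(k)}}$ after absorbing the additive $O(1)$ terms into the overall constant, while the scalar prefactor $\prod_k|\theta_k|^{-1/q}$ passes outside the mixed norm.

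The main obstacle is keeping the argument uniform in $\theta\in(0,R]^d$ when one or more of the $p_k$ lies in $(0,1)$: the mixed-norm iteration must be performed without picking up spurious $\theta$-dependent constants. This is addressed by treating the $p\ge 1$ and $p<1$ regimes separately in the one-dimensional step and exploiting the $\{0,1\}$-valued nature of the kernel in the latter. The rest is a routine iteration, structurally analogous to the argument already carried out in the proof of Proposition \ref{PropSemiContConvEst}.
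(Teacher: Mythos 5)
Your proof is correct and follows essentially the same route as the paper's: a change of variables producing the Jacobian factor $\prod_k|\theta_k|^{-1/q}$, a covering of the dilated cubes by unit integer cubes with the weight shifted via moderateness of $\omega$, and a coordinate-wise multiplicity count yielding $\prod_k\lfloor 1+|\theta_k|^{-1}\rfloor^{1/p_{\sigma(k)}}$. The only difference is organizational: the paper encodes the double indexing through an interleaved mixed norm $\ell^{\mathbf r}(M)$ with doubled exponents, whereas you dominate pointwise by a tensor-product $\{0,1\}$-valued kernel and iterate a one-dimensional Schur-type bound through the mixed quasi-norm, which is an equivalent (and arguably more transparent) bookkeeping of the same estimate.
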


\par

\begin{proof}
By considering
$$
f(x_{\sigma (1)},\dots ,x_{\sigma (d)})\quad \text{and}\quad
\omega (x_{\sigma (1)},\dots ,x_{\sigma (d)})
$$
instead of $f(x_1,\dots ,x_d)$ and $\omega (x_1,\dots ,x_d)$,
we reduce ourself to the case when $\sigma$ is the identity map.

\par

Let $Q=[0,1]^d$,
\begin{align*}
\Omega _{n,j} &\equiv  T_\theta (j+Q)\bigcap (n+Q)\subseteq n+Q,
\\[1ex]
I_n &\equiv \sets {j\in \zz d}{\Omega _{n,j}\neq \emptyset} ,
\\[1ex]
M &\equiv \sets {(n_1,j_1,\dots ,n_d,j_d)\in \zz {2d}}{(j_1,\dots ,j_d)\in I_n},
\\[1ex]
\mabfr &\equiv (p_1,p_1,p_2,p_2,\dots ,p_d,p_d)\in (0,\infty ]^{2d}
\intertext{and}
c_1(\theta ) &\equiv \prod _{k=1}^d|\theta _k|^{-1/q}
\end{align*}
Then
\begin{multline*}
\nm {T_\theta ^*f}{{\sfW ^q(T_\theta ^*\omega ,\ell ^{\mabfp})}}
=
\Big \Vert \big \{ \nm {f(T_\theta  \cdo )}{L^q (j+Q)}\omega (T_\theta j)
\big \}_{j\in \zz d} \Big \Vert _{\ell ^{\mabfp} (\zz d)}
\\[1ex]
=
c_1(\theta )
\Big \Vert \big \{ \nm {f}{L^q (T_\theta (j+Q))}\omega (T_\theta j)
\big \}_{j\in \zz d} \Big \Vert _{\ell ^{\mabfp}(\zz d)}
\\[1ex]
c_1(\theta )
\Big \Vert \big \{ \nm {f}{L^q (\Omega _{n,j})}\omega (T_\theta j)
\big \}_{j\in \zz d} \Big \Vert _{\ell ^{\mabfr}(M)}
\\[1ex]
\le
Cc_1(\theta )
\Big \Vert \big \{ \nm {f}{L^q (\Omega _{n,j})}\omega (n)
\big \}_{(n,j)\in M} \Big \Vert _{\ell ^{\mabfr}(M)},
\end{multline*}
where
$$
C= \sup _{x\in \rr d}
\left (\sup _{y\in [-1,R]^d}\omega (x+y)/\omega (x) \right ) <\infty .
$$
Here we use the convention that for any subset $M$ of $\zz d$ and sequence
$a$ on $M$, then $\nm a{\ell ^{\mabfp}(M)}\equiv \nm b{\ell ^{\mabfp}(\zz d)}$,
where $b(j)=a(j)$ when $j\in M$, and $b(j)=0$ otherwise.

\par

Since $ \nm {f}{L^q (\Omega _{n,j})}\le  \nm {f}{L^q (n+Q)}$ and
the number of terms in $I_n$ in direction $k$ is at most
$\lfloor 1+|\theta _k|
^{-1}\rfloor $, we get
\begin{multline*}
\nm {T_\theta ^*f}{{\sfW ^q(T_\theta ^*\omega ,\ell ^{\mabfp})}}
\le
Cc_1(\theta )c_2(\theta ) \Big \Vert \big \{ \nm {f}{L^q (n+Q)}\omega (n)
\big \}_{j\in \zz d} \Big \Vert _{\ell ^{\mabfp}(\zz d)}
\\[1ex]
=
Cc_1(\theta )c_2(\theta )\nm {f} {{\sfW ^q(\omega ,\ell ^{\mabfp})}},
\end{multline*}
where
$$
c_2(\theta )=\prod _{k=1}^d\lfloor 1+|\theta _k|^{-1}\rfloor ^{1/p_k}.
$$
This gives the result.
\end{proof}

\par

\begin{prop}\label{WienerProp}
Let $\sigma \in \operatorname {S}_d$, $\theta \in \mathbf R_*^d$,
$\omega _k\in \mascP _E(\rr d)$, and let $\mabfp _k\in (0,\infty ]^d$,
$q_k\in (0,\infty ]$, $k=1,2,3$, be such that $q_0\ge 1$,
$$
L^{q_1}(\rr d)*L^{q_2}(\rr d)\subseteq L^{q_0}(\rr d)
\quad \text{and}\quad
\ell ^{\mabfp _1}_{\sigma ,(\omega _1)}(\zz d) *
\ell ^{\mabfp _2}_{\sigma ,(\omega _2)}(\zz d) \subseteq
\ell ^{\mabfp _0}_{\sigma ,(\omega _0)}(\zz d),
$$
with continuous embeddings, and
\begin{equation}\label{pqrestriction}
\max (\mabfp _1,q_1)<\infty
\quad \text{or}\quad
\max (\mabfp _2,q_2)<\infty .
\end{equation}
Then the following is true:
\begin{enumerate}
\item The map $(f_1,f_2)\mapsto f_1*f_2$
is continuous from $\sfW ^{q_1}(\omega _1,\ell ^{\mabfp _1}_\sigma
(\zz d)) \times \sfW ^{q_2}(\omega _2,\ell ^{\mabfp _2}_\sigma(\zz d))$ to
$\sfW ^{q_0}(\omega _0,\ell ^{\mabfp _0}_\sigma(\zz d))$, and
$$
\nm {f_1*f_2}{\sfW ^{q_0}(\omega _0,\ell ^{\mabfp _0}_\sigma)}
\lesssim
\nm {f_1}{\sfW ^{q_1}(\omega _1,\ell ^{\mabfp _1}_\sigma)}
\nm {f_2}{\sfW ^{q_2}(\omega _2,\ell ^{\mabfp _2}_\sigma)} \text ;
$$

\vrum

\item The map $(a,f)\mapsto a*_{[\theta ]} f$ is continuous
from $\ell ^{\mabfp _1}_{\sigma ,(T_\theta ^*\omega _1 )}(\zz d) \times
\sfW (\omega _2,\ell ^{\mabfp _2}_\sigma (\zz d))$ to
$\sfW (\omega _0,\ell ^{\mabfp _0}_\sigma (\zz d))$, and
$$
\nm {a*_{[\theta ]} f}{\sfW (\omega _0,\ell ^{\mabfp _0}_\sigma)}
\lesssim
\nm a{\ell ^{\mabfp _1}_{\sigma ,(\omega _1)}}
\nm f{\sfW (\omega _2,\ell ^{\mabfp _2}_\sigma )}
$$
\end{enumerate}
\end{prop}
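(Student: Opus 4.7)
Both parts are established by the standard Wiener-type amalgamation argument: the convolution on the continuous (respectively semi-discrete) side is controlled cube by cube, and then globally by a discrete convolution of the amalgam coefficients $b_{f_k,\omega _k,q_k}(j)\equiv \nm {f_k}{L^{q_k}(j+Q)}\omega _k(j)$, to which the hypothesised embedding $\ell ^{\mabfp _1}_{\sigma ,(\omega _1)}*\ell ^{\mabfp _2}_{\sigma ,(\omega _2)}\subseteq \ell ^{\mabfp _0}_{\sigma ,(\omega _0)}$ is applied. Note that evaluating this embedding on unit point masses forces the pointwise estimate $\omega _0(m)\lesssim \omega _1(j)\omega _2(m-j)$, which is the only feature of $\omega _0$ used below.

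For part (1), I decompose $f_k=\sum _{j_k\in \zz d}f_k\chi _{j_k+Q}$ and observe that, for $x\in n+Q$, the term $(f_1\chi _{j_1+Q})*(f_2\chi _{j_2+Q})(x)$ is nonzero only if $j_1+j_2\in n+F$ for a fixed finite set $F\subseteq \zz d$ depending only on $d$. Applying the hypothesised continuous embedding $L^{q_1}*L^{q_2}\subseteq L^{q_0}$ locally yields
\begin{equation*}
\nm {f_1*f_2}{L^{q_0}(n+Q)}\lesssim \sum _{e\in F}(b_{f_1,1,q_1}*b_{f_2,1,q_2})(n-e).
\end{equation*}
Multiplying by $\omega _0(n)$, invoking the pointwise weight bound, and absorbing the finitely many shifts via moderateness of $\omega _0$, we dominate $b_{f_1*f_2,\omega _0,q_0}$ in $\ell ^{\mabfp _0}_{\sigma ,(\omega _0)}$ by the discrete convolution of $b_{f_1,\omega _1,q_1}$ and $b_{f_2,\omega _2,q_2}$, and the hypothesised sequence embedding then closes the argument. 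Assumption \eqref{pqrestriction} is used only to guarantee density of finitely supported approximants in one of the factor spaces, which justifies the rearrangements and Fubini-type steps above.

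For part (2), the argument is the same in spirit, but the sum runs over the (generally non-integer) lattice $T_\theta \zz d$. From the pointwise bound
\begin{equation*}
|(a*_{[\theta ]}f)(x)|\le \sum _{j\in \zz d}|a(j)|\, |f(x-T_\theta j)|,
\end{equation*}
the norm $\nm {f}{L^\infty ((n+Q)-T_\theta j)}$ is dominated by $\max _{\eta \in F'}b_{f,1,\infty }(n-\phi (j)+\eta )$, where $\phi :\zz d\to \zz d$ sends $j$ coordinate-wise to the nearest integer to $T_\theta j$ and $F'\subseteq \zz d$ is a fixed finite neighbourhood of the origin. Introducing the pushforward sequence $\tilde a(m)\equiv \sum _{\phi (j)=m}|a(j)|$ converts the estimate into a genuine discrete convolution $\tilde a*c$ with $c(k)=\nm {f}{L^\infty (k+Q)}$; the hypothesised discrete embedding, applied to $\tilde a$ and $c$, then yields the claimed bound, the weight being handled via $\omega _1(\phi (j))\asymp (T_\theta ^*\omega _1)(j)$, a consequence of moderateness of $\omega _1$ since $\phi (j)-T_\theta j$ stays bounded.

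The main technical obstacle in (2) is therefore to control $\tilde a$ by $a$ in the correct sequence norm. Since $\theta \in \rrstar d$ is fixed and $\phi$ is diagonal, $\phi$ is coordinate-wise at most $M$-to-one with $M=M(\theta ,d)$, and the required bound
\begin{equation*}
\nm {\tilde a}{\ell ^{\mabfp _1}_{\sigma ,(\omega _1)}}\lesssim \nm {a}{\ell ^{\mabfp _1}_{\sigma ,(T_\theta ^*\omega _1)}}
\end{equation*}
follows by iterating, variable by variable, the elementary inequalities $(\sum _{i=1}^M x_i)^p\le \sum _{i=1}^M x_i^p$ when $p\le 1$ and $(\sum _{i=1}^M x_i)^p\le M^{p-1}\sum _{i=1}^M x_i^p$ when $p\ge 1$. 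This is precisely the mechanism underlying Lemma \ref{Wienerdil}, so no new ingredient is needed beyond that lemma and the already-established Proposition \ref{PropSemiContConvEst}.
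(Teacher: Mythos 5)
Your proposal is correct in overall architecture, and part (1) is essentially the paper's own argument: decompose into unit cubes, apply $L^{q_1}*L^{q_2}\subseteq L^{q_0}$ locally (the interchange of the sum over cubes with the $L^{q_0}$-norm is exactly where $q_0\ge 1$ enters), reduce to a discrete convolution of cube coefficients, and invoke the hypothesised sequence embedding. One bookkeeping slip there: as written you convolve the \emph{weighted} coefficients $b_{f_k,\omega _k,q_k}$ and then say the hypothesised embedding closes the argument; but that embedding concerns the weighted spaces $\ell ^{\mabfp _k}_{\sigma ,(\omega _k)}$, so applying it to already-weighted sequences would require the corresponding \emph{unweighted} embedding $\ell ^{\mabfp _1}_\sigma *\ell ^{\mabfp _2}_\sigma \subseteq \ell ^{\mabfp _0}_\sigma$, which is not among the hypotheses and does not follow from the one-sided pointwise bound $\omega _0(m)\lesssim \omega _1(j)\omega _2(m-j)$. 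The clean route, and the one the paper takes, is to convolve the unweighted coefficients $a_k(j)=\nm {f_k}{L^{q_k}(j+Q)}$, absorb the finitely many shifts by moderateness of $\omega _0$, and apply the hypothesised embedding directly to $a_1*a_2$; the pointwise weight bound is then not needed at all. For part (2) you take a genuinely different route: you discretise the lattice $T_\theta \zz d$ via the nearest-integer pushforward $\tilde a(m)=\sum _{\phi (j)=m}|a(j)|$ and control $\nm {\tilde a}{\ell ^{\mabfp _1}_{\sigma ,(\omega _1)}}$ by the bounded multiplicity of $\phi$, whereas the paper first majorises $f$ by a function constant on unit cubes and then uses the \emph{exact} identity $T_\theta ^*(a*_{[\theta ]}f)=a*_{[\theta _0]}(T_\theta ^*f)$ with $\theta _0=(1,\dots ,1)$, together with Lemma \ref{Wienerdil}, to turn the semi-discrete convolution into a genuine discrete one on $\zz d$. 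Both mechanisms work: the paper's avoids the multiplicity count and the approximation error at the price of invoking the dilation lemma twice (for the two directions of the $\asymp$), while yours is self-contained modulo the coordinate-wise $\ell ^p$-inequalities you cite, which do indeed iterate correctly through the mixed norm because $\phi$ acts diagonally.
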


\par

\begin{proof}
By \eqref{pqrestriction} and density argument, it suffices to prove the
quasi-norm estimates. Furthermore, by a suitable change of variables,
we may assume that $\sigma$ is the identity map.

\par

(1) Let $Q=[0,1]^d$ as usual, and let
$a_k$, $k=0,1,2$, be the sequences on $\zz d$, defined by
$$
a_k(j) \equiv \nm {f_k}{L^{q_k}(j+Q)},\quad j\in \zz d
$$
and $f_0=f_1*f_2$. Then
\begin{multline}\label{a0est}
a_0(j) \le \left ( \int _{j+Q} \left ( \int _{\rr d} |f_1(x-y)f_2(y)|
\, dy \right )^{q_0}\, dx\right )^{1/q_0}
\\[1ex]
=
\left ( \int _{j+Q} \left ( \sum _{j_0\in \zz d}\int _{j_0+Q} |f_1(x-y)f_2(y)|
\, dy \right )^{q_0}\, dx\right )^{1/q_0}
\\[1ex]
\le
\sum _{j_0\in \zz d}
\left ( \int _{j+Q} \left ( \int _{j_0+Q} |f_1(x-y)f_2(y)|
\, dy \right )^{q_0}\, dx\right )^{1/q_0}
\end{multline}

\par

Now, if $x\in j+Q$ and $y\in j_0+Q$, then
$$
x-y\in j-j_0+[-1,1]^d = \bigcup _{n\in \{ 0,1\} ^d} (j-j_0-n+Q).
$$
Hence if $h_k(j,\cdo )=f_k\chi _{j+Q}$, $k=1,2$, then \eqref{a0est} and
Young's inequality give
\begin{multline*}
a_0(j) \le \sum _{n\in \{ 0,1\} ^d}
\sum  _{j_0\in \zz d}\nm {|h_1(j-j_0+n,\cdo )|
* |h_2(j_0,\cdo )|}{L^{q_0}}
\\[1ex]
\le \sum _{n\in \{ 0,1\} ^d}
\sum  _{j_0\in \zz d}\nm {h_1(j-j_0+n,\cdo )}{L^{q_1}}
\nm {h_2(j_0,\cdo )}{L^{q_2}}
\\[1ex]
=\sum _{n\in \{ 0,1\} ^d} (a_1*a_2)(j+n).
\end{multline*}
Here the convolution between $h_1(j_1,x)$ and $h_2(j_2,x)$
should be taken with respect to the $x$-variable only, considering
$j_1$ and $j_2$ as constants.

\par

Now it follows from the assumptions that
\begin{multline*}
\nm {f_1*f_2}{\sfW ^{q_0}(\omega _0,\ell ^{\mabfp _0})}
= \nm {a_0}{\ell ^{\mabfp _0}_{(\omega _0)}}
\\[1ex]
\le
\sum _{n\in \{ 0,1\} ^d} \nm {(a_1*a_2)(\cdo +n)}
{\ell ^{\mabfp _0}_{(\omega _0)}}
\lesssim
\nm {a_1*a_2} {\ell ^{\mabfp _0}_{(\omega _0)}}
\\[1ex]
\lesssim
\nm {a_1}{\ell ^{\mabfp _1}_{(\omega _1)}}
\nm {a_2}{\ell ^{\mabfp _2}_{(\omega _2)}}
=
\nm {f_1}{\sfW ^{q_1}(\omega _1,\ell ^{\mabfp _1})}
\nm {f_2}{\sfW ^{q_2}(\omega _2,\ell ^{\mabfp _2})},
\end{multline*}
and the result follows in this case. Here the second inequality
follows from the fact that $\omega _0$ is $v$-moderate for
some $v$. This gives (1).

\par

(2) Since $\omega$ is $v$-moderate we get
$$
|a*_{[\theta ]} f|\cdot \omega  \lesssim |a\cdot (\omega \circ T_\theta )|
*_{[\theta ]} |f\cdot v| ,
$$
which reduce the situation to the case when $\omega =v=1$.
Furthermore, since
\begin{align*}
\nm {a*_{[\theta ]} f}{\sfW (\ell ^{\mabfp })}
&\le
\nm {|a|*_{[\theta ]} g}{\sfW (\ell ^{\mabfp })},
\quad
\nm a{\ell ^{\mabfp}} = \nm {\, |a|\, }{\ell ^{\mabfp}}
\intertext{and}
\nm f{W(L^\mabfp )} &\asymp \nm g{\sfW (\ell ^\mabfp )}
\intertext{when}
T_\theta ^*g &= \sum _{j\in \zz d}\nm f{L^\infty (j+Q)}\chi _{j+Q} ,
\end{align*}
we may assume that $a\ge 0$ and $(T_\theta ^*f)(x) =b(j)\ge 0$
when $x\in j+Q$.

\par

Let $\theta _0=(1,\dots ,1)$. By Lemma \ref{Wienerdil} we get
\begin{multline*}
\nm {a*_{[\theta ]} f}{\sfW (\ell ^{\mabfp _0})}
\asymp
\nm {T_\theta ^*(a*_{[\theta ]} f)}{\sfW (\ell ^{\mabfp _0})}
=
\nm {a*_{[\theta _0]} (T_\theta ^*f)}{\sfW (\ell ^{\mabfp _0})}
\\[1ex]
=
\nm {a*b}{\ell ^{\mabfp _0}}
\lesssim
\nm a{\ell ^{\mabfp _1}}\nm b{\ell ^{\mabfp _2}}
\asymp 
\nm a{\ell ^{\mabfp _1}}\nm f{\sfW (\ell ^{\mabfp _2})},
\end{multline*}
and the result follows.
\end{proof}

\par

%%%%%%%%%%%%%%%%%%%%%%%%%%%%%%
\section{Time-frequency representation of modulation spaces}
\label{sec3}
%%%%%%%%%%%%%%%%%%%%%%%%%%%%%%

\par

In this section we extend the Gabor analysis for modulation spaces
of the form $M^{p,q}_{(\omega)}(\rr {d})$ with $p,q\in (0,\infty ]$ and
$\omega \in \mascP(\rr {2d})$ in \cite{GaSa}, to spaces of the form
$M^{\mabfp} _{\sigma ,(\omega)}(\rr {d})$ with $\sigma \in
\operatorname S_{2d}$, $\mabfp \in (0,\infty ]^{2d}$ and
$\omega \in \mascP _E(\rr {2d})$. Especially we deduce invariance
properties for $M^{\mabfp} _{\sigma ,(\omega)}(\rr {d})$ concerning
the choice of the window function $\phi$ in \eqref{modnorm2}, and that
the results on Gabor expansions in \cite{GaSa,Gc2} also hold in this
more general situation. As a consequence we deduce that
$M^\mabfp _{(\omega)}$ increases with $\mabfp$.

\par

We have now the following proposition.

\par

\begin{prop}\label{propwindowindep}
Let $\mabfp \in (0,\infty ]^{2d}$, $r=\min (1,\mabfp )$,
$\omega ,v\in \mascP _E(\rr {2d})$ be such that $\omega$ is
$v$-moderate, and let $\Theta _\rho v$ be the same as in Remark
\ref{RemThmS}. Also let $\sigma \in \operatorname
S_{2d}$, $\phi _1,\phi _2\in M^1_{\sigma ,(\Theta _\rho v)}
(\rr d)\setminus 0$, and let $f\in \Sigma _1'(\rr d)$. Then
$$
\nm {V_{\phi _1}f}{L^{\mabfp}_{\sigma ,(\omega )}} \le C
\nm {V_{\phi _2}f}{L^{\mabfp}_{\sigma ,(\omega )}},
$$
for some constant $C$ which is independent of $f\in
\Sigma _1'(\rr d)$. In particular, the modulation space
$M^{\mabfp}_{\sigma ,(\omega )}(\rr d)$ is independent of the
choice of $\phi \in M^1_{\sigma ,(\Theta _sv)}(\rr d)\setminus 0$ in
\eqref{modnorm2}, and different choices of $\phi$ give rise to
equivalent norms.
\end{prop}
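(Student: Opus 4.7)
The plan is to reduce the window change to a convolution estimate on sequence and Wiener spaces, which is then handled by the machinery of Section~\ref{sec2}. I would start from the pointwise change-of-window inequality
$$
|V_{\phi _1}f(x,\xi )| \le \frac {1}{|(\phi _1,\phi _2)|}
\bigl( |V_{\phi _2}f|* |V_{\phi _2}\phi _1| \bigr) (x,\xi ),
\qquad (x,\xi )\in \rr {2d},
$$
which is standard for $\phi _1,\phi _2\in \Sigma _1(\rr d)\setminus 0$ with $(\phi _1,\phi _2)\neq 0$ and $f\in \Sigma _1'(\rr d)$ (for the non-orthogonal case one inserts an auxiliary third window and uses two copies of the formula). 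Propositions \ref{stftGelfand2} and \ref{stftGelfand2dist} guarantee that all short-time Fourier transforms appearing above are continuous functions with controlled growth, so the pointwise estimate is well-defined.

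The obstacle is that $\mabfp$ may contain exponents in $(0,1)$, so direct Young-type estimates in $L^\mabfp _{\sigma ,(\omega )}$ fail. To bypass this I would lift the problem to Wiener amalgam spaces. For a continuous function $F$ one has $\nm F{L^\mabfp _{\sigma ,(\omega )}}\lesssim \nm F{\sfW ^\infty (\omega ,\ell ^\mabfp _\sigma )}$ by a trivial cube-by-cube argument, while the reproducing nature of the short-time Fourier transform (together with Lemma \ref{Wienerdil}) yields the matching bound $\nm {V_{\phi _2}f}{\sfW ^\infty (\omega ,\ell ^\mabfp _\sigma )}\lesssim \nm {V_{\phi _2}f}{L^\mabfp _{\sigma ,(\omega )}}$ whenever the window is sufficiently regular. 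Setting $r=\min (1,\mabfp )$, the corollary to Proposition \ref{PropSemiContConvEst} provides the sequence-level embedding $\ell ^r_{(v)}(\zz {2d})*\ell ^\mabfp _{\sigma ,(\omega )}(\zz {2d})\subseteq \ell ^\mabfp _{\sigma ,(\omega )}(\zz {2d})$, and Proposition \ref{WienerProp} (combined with a standard discretization of the continuous convolution) transfers this to
$$
\sfW ^\infty (v,\ell ^r(\zz {2d}))*\sfW ^\infty (\omega ,\ell ^\mabfp _\sigma (\zz {2d}))
\subseteq \sfW ^\infty (\omega ,\ell ^\mabfp _\sigma (\zz {2d})).
$$

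The crucial input, and the reason for the peculiar weight $\Theta _\rho v$, is that $V_{\phi _2}\phi _1\in \sfW ^\infty (v,\ell ^r(\zz {2d}))$. By Remark \ref{RemThmS}, the choice $\rho >2d(1-r)/r$ is made exactly so that $M^1_{(\Theta _\rho v)}\subseteq M^r_{(v)}$, so the hypothesis $\phi _1,\phi _2\in M^1_{\sigma ,(\Theta _\rho v)}$ places $V_{\phi _2}\phi _1$ in $L^r_{(v)}(\rr {2d})$; smoothness of the short-time Fourier transform then upgrades this $L^r_{(v)}$ control to the needed Wiener estimate, with a constant depending only on $\phi _1,\phi _2$ and $v$. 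Combining everything yields
$$
\nm {V_{\phi _1}f}{L^\mabfp _{\sigma ,(\omega )}}\lesssim
\nm {V_{\phi _2}\phi _1}{\sfW ^\infty (v,\ell ^r)}\,
\nm {V_{\phi _2}f}{L^\mabfp _{\sigma ,(\omega )}},
$$
and swapping $\phi _1$ and $\phi _2$ gives the reverse bound, from which independence of the window and equivalence of norms follow. I expect the main obstacle to lie in the Wiener amalgam bookkeeping: verifying the two-sided comparison between $L^\mabfp _{\sigma ,(\omega )}$ and $\sfW ^\infty (\omega ,\ell ^\mabfp _\sigma )$ for short-time Fourier transforms with constants uniform in the window, and checking that the index constraints of Proposition \ref{WienerProp} are indeed met when $\max \mabfp$ is allowed to be infinite and some $p_j$ are below $1$.
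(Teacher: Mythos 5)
Your overall strategy (reduce the window change to a convolution estimate and handle the quasi-Banach exponents by passing to Wiener amalgam spaces) is natural, but it has a circularity at its central step. The bound you call ``the matching bound'', namely
$\nm {V_{\phi _2}f}{\sfW ^\infty (\omega ,\ell ^{\mabfp}_\sigma )}\lesssim \nm {V_{\phi _2}f}{L^{\mabfp}_{\sigma ,(\omega )}}$
for a \emph{general} window $\phi _2\in M^1_{(\Theta _\rho v)}$, is precisely Proposition \ref{WienerEquiv}, and in the paper that proposition is derived \emph{from} Proposition \ref{propwindowindep}: the passage from a Gaussian window to a general one in its proof explicitly invokes window independence. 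The justification you offer (``the reproducing nature of the short-time Fourier transform together with Lemma \ref{Wienerdil}'') is not enough when some $p_j<1$: the reproducing identity gives $|V_{\phi _2}f|\lesssim |V_{\phi _2}f|*|V_{\phi _2}\phi _2|$, which controls the local supremum of $V_{\phi _2}f$ by a local $L^1$ average, and bounding a local $L^1$ average by a local $L^{p_j}$ quasi-norm with $p_j<1$ goes the wrong way. The only case in which the pointwise-by-local-$L^p$ estimate is available from scratch is the Gaussian window, via the subharmonicity argument of Lemma \ref{SubharmonicEst}; and if you try to close the loop by routing everything through a Gaussian $\phi _0$, you end the chain with $\nm {V_{\phi _2}f}{\sfW ^\infty (\omega ,\ell ^{\mabfp}_\sigma )}$ on the right-hand side and are back to needing the same unproved inequality for the general window $\phi _2$.

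The paper avoids this trap by never using the continuous convolution relation at this stage. Instead it expands $\phi _1$ in the Gabor frame generated by $\phi _2$ over a fine lattice $\ep \zz {2d}$ (Proposition \ref{ThmS} guarantees the dual window $\psi$ stays in $M^1_{(\Theta _\rho v)}$), which turns the change of window into a \emph{semi-discrete} convolution $|V_{\phi _1}f|\le |b|*_{[\theta ]}|V_{\phi _2}f|$ with $b$ the sampled coefficients $|(V_\psi \phi _1)(-\ep x_j,-\ep \xi _k)|$. Semi-discrete convolutions of an $\ell ^r$ sequence against an $L^{\mabfp}$ function are well behaved even when $\min \mabfp <1$ (Proposition \ref{PropSemiContConvEst}), and the weight bump $\Theta _\rho$ is used exactly to get $b\in \ell ^1_{(\Theta _\rho v)}\subseteq \ell ^r_{(v)}$. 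If you want to keep your Wiener-amalgam architecture, you must first prove window independence by some non-circular means (in practice, the discrete route just described) and only then establish the two-sided $L^{\mabfp}$--$\sfW ^\infty$ comparison for general windows.
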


\par

The proof follows by similar arguments as the proof of
Theorem 3.1 in \cite{GaSa}. In order to be self-contained we
here present a proof.
For the proof we need the following lemma on point estimates
for short-time Fourier transforms with Gaussian windows. The result
is a slight extension of Lemma 2.3 in \cite{GaSa}. Here and in
what follows we let $B_r(x_0)$ be the open ball in $\rr d$ with center at
$x_0\in \rr d$ and radius $r>0$.

\par

\begin{lemma}\label{SubharmonicEst}
Let $p\in (0,\infty ]$, $r>0$, $(x_0,\xi _0)\in \rr {2d}$ be fixed,
and let $\phi \in \maclS _{1/2}(\rr d)$ be a Gaussian. Then
$$
|V_\phi f(x_0,\xi _0)| \le C\nm {V_\phi f}{L^p(B_r(x_0,\xi _0))},\quad
f\in \maclS _{1/2}'(\rr d),
$$
where the constant $C$ is independent of $(x_0,\xi _0)$ and $f$.
\end{lemma}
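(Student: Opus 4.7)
The plan is to exploit the fact that when $\phi$ is a Gaussian, the short-time Fourier transform $V_\phi f$ is, up to a Gaussian weight and a unimodular phase, the modulus of an entire function. More precisely, writing $z=x+i\xi \in \cc d$, one has the identity
\begin{equation*}
V_\phi f(x,\xi) = e^{-i\pi x\cdot \xi}\, e^{-\pi |z|^2/2}\, F(z),
\end{equation*}
for some entire function $F$ on $\cc d$ (this is essentially the Bargmann transform of $f$, and the identity is verified by completing the square in the defining integral \eqref{defstft}$'$, extended to $f\in \maclS _{1/2}'(\rr d)$ by duality through Proposition \ref{stftGelfand1}). I would begin the proof by recording this identity and noting in particular that $|V_\phi f(x,\xi)|=e^{-\pi |z|^2/2}|F(z)|$.

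The main difficulty is that, although $|F|^p$ is plurisubharmonic and therefore obeys a sub-mean-value inequality on balls, the naive application to $F$ itself gives a constant that depends on $|z_0|$ through the Gaussian weight $e^{\pi |z|^2/2}$ (on $B_r(z_0)$ the ratio $e^{\pi (|z|^2-|z_0|^2)/2}$ is not uniformly bounded). The key step is to first apply a holomorphic gauge transformation before using subharmonicity. Set
\begin{equation*}
H(z) \equiv F(z)\, e^{-\pi z\cdot \overline{z_0}}\, e^{\pi |z_0|^2/2}.
\end{equation*}
Since $z\mapsto z\cdot \overline{z_0}$ is holomorphic in $z$, $H$ is entire on $\cc d$, and hence $|H|^p$ is plurisubharmonic for every $p\in (0,\infty)$.

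Next I would exploit the elementary polarization identity
\begin{equation*}
\tfrac 12|z|^2+\tfrac 12|z_0|^2-\RE (z\cdot \overline{z_0})=\tfrac 12|z-z_0|^2,
\end{equation*}
which gives the two key relations
\begin{equation*}
|H(z_0)|=|V_\phi f(x_0,\xi _0)|
\quad \text{and}\quad
|H(z)|=|V_\phi f(x,\xi)|\, e^{\pi |z-z_0|^2/2}.
\end{equation*}
On $B_r(z_0)$ the weight $e^{\pi |z-z_0|^2/2}$ is bounded by $e^{\pi r^2/2}$, uniformly in $z_0$.

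The final step is to apply the sub-mean-value inequality for $|H|^p$ on $B_r(z_0)$ and substitute these identities:
\begin{equation*}
|V_\phi f(x_0,\xi_0)|^p=|H(z_0)|^p \le \frac{1}{|B_r|}\int _{B_r(z_0)}|H(z)|^p\, dV(z)\le \frac{e^{\pi p r^2/2}}{|B_r|}\int _{B_r(z_0)}|V_\phi f(x,\xi)|^p\, dxd\xi .
\end{equation*}
Taking $p$-th roots yields the desired estimate with a constant $C=C(r,p)$ independent of $(x_0,\xi_0)$ and $f$. The case $p=\infty$ is immediate from the continuity of $V_\phi f$ (with $C=1$). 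The main obstacle, as noted, is precisely the insight to use the gauge $e^{-\pi z\cdot\overline{z_0}}e^{\pi |z_0|^2/2}$ so that the Gaussian factors combine into $e^{\pi |z-z_0|^2/2}$, after which the argument is routine.
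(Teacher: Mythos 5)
Your proof is correct and takes essentially the same route as the paper, which likewise reduces to a centered Gaussian and then follows Lemma 2.3 of \cite{GaSa}, based on the entirety of the gauged function $F_w(z)=e^{c_1|z|^2+c_2(z,w)+c_3|w|^2}V_\phi f(x,\xi )$ (your $H$, with $w=z_0$) and the sub-mean-value inequality for $|F_w|^p$. Your writeup even makes explicit the volume factor $|B_r|^{-1/p}$ whose omission in \cite{GaSa} the paper points out in the remark following the lemma.
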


\par

When proving Lemma \ref{SubharmonicEst} we may first reduce ourself
to the case that the Gaussian $\phi$ should be centered at origin, by
straight-forward arguments involving pullbacks with translations.
The result then follows by using the same arguments as in
\cite[Lemma 2.3.]{GaSa} and its proof, based on the fact that
$$
z\mapsto F_w(z) \equiv e^{c_1|z|^2+c_2(z,w)+c_3|w|^3}V_\phi f(x,\xi ),
\quad z=x+i\xi
$$
is an entire function for one choice of the constant
$c_1$ (depending on $\phi$).

\par

\begin{rem}
We note that Lemma 2.3 and its proof in \cite{GaSa} contains a
mistake, which is not
important in the applications. In fact, when using the mean-value
inequality for subharmonic functions in the proof, a factor of
the volume for the ball which corresponds to $B_r(x_0,\xi _0)$ in Lemma
\ref{SubharmonicEst} is missing. This leads to that stated invariance
properties of constants in several results in \cite{GaSa} are
more dependent of the involved parameters than what are stated.
\end{rem}

\par

\begin{proof}[Proof of Proposition \ref{propwindowindep}]
Let $v_0=\Theta _\rho v$, and let
$$
\Lambda =\ep  \zz {2d}=\sets {{\ep x_j,\ep \xi _k}}{j,k\in J},
$$
where $J$ is an index set and $\ep >0$ is chosen small enough such
that $\{ e^{i\scal \cdo {\xi _k}}\phi _1(\cdo -x_j) \} _{j,k\in
J}$ is a Gabor frame. Since
$\phi _2\in M^1_{(v_0)}$, it follows that its dual window $\psi$
belongs to $M^1_{(v_0)}$, in view of Proposition \ref{ThmS}. By
Proposition \ref{p1.4B} (3) we have
$$
\phi _1 =\sum _{j,k\in J} (V_\psi \phi _1)(x_j,\xi _k)
e^{i\scal \cdo {\xi _k}}\phi _2(\cdo -x_j),
$$
with unconditional convergence in $M^1_{(v_0)}$. This gives,
\begin{multline*}
|V_{\phi _1}f(x,\xi )| = (2\pi )^{-d/2}|(f,e^{i\scal \cdo {\xi}}
\phi _1(\cdo -x))|
\\[1ex]
\le (2\pi )^{-d/2}\sum _{j,k\in J}
|(V_\psi \phi _1)(x_j,\xi _k)|  |(f,e^{i\scal \cdo {\xi +\xi _k}}
\phi _2(\cdo -x-x_j))|
\\[1ex]
=  (2\pi )^{-d/2}\sum _{j,k\in J}
|(V_\psi \phi _1)(x_j,\xi _k)|  |V_{\phi _2}(x+x_j,\xi +\xi _j)|
=
(|b| *_{[\theta ]} |V_{\phi _2}f|)(x,\xi  ),
\end{multline*}
where $b(x_j,\xi _k) = |(V_\psi \phi _1)(-\ep x_j,-\ep \xi _k)|$, and
$\theta _j=\ep$, $j=1,\dots ,2d$.

\par

By Proposition \ref{PropSemiContConvEst}  and Lemma \ref{Wienerdil}
we get  with $r=\min \mabfp$,
\begin{multline*}
\nm {V_{\phi _1}f}{L^{\mabfp}_{\sigma ,(\omega )}}
\lesssim
\nm b{\ell ^r_{\sigma ,(v)}}
\nm {V_{\phi _2}f}{L^{\mabfp}_{(\omega )}}
\\[1ex]
\lesssim
\nm b{\ell ^1_{(v_0)}}\nm {V_{\phi _2}f}{L^{\mabfp}_{\sigma ,(\omega )}}
\asymp
\nm {V_\psi \phi _1}{L^1_{(v_0)}}
\nm {V_{\phi _2}f}{L^{\mabfp}_{\sigma ,(\omega )}}.
\end{multline*}
Here we have used the fact that $L^{\mabfp}_{\sigma ,(\omega )}(\rr {2d})
=L^{\mabfp}_{(\omega )}(\rr {2d})$ when $p_1=\cdots =p_{2d}$. Since
$\nm {V_\psi \phi _1}{L^1_{(v_0)}} \asymp \nm {\phi _1}{M^1_{(v_0)}}<\infty$
by Proposition 12.1.2 in \cite{Gc2}, the result follows.
\end{proof}

\par

We have now the following result related to \cite[Theorem 3.3]{GaSa}.

\par

\begin{prop}\label{WienerEquiv}
Let $\mabfp \in (0,\infty ]^{2d}$, $\omega ,v\in \mascP _E(\rr {2d})$
be such that $\omega$ is $v$-moderate, $\Theta _\rho v$ be the
same as in Proposition \ref{propwindowindep}, $\phi _1,\phi _2\in
M^1_{(\Theta _\rho v)} (\rr d)\setminus 0$, $\sigma
\in \operatorname {S}_{2d}$ and let $\omega \in \mascP _E(\rr {2d})$.
Then $V_{\phi _1} f \in L^\mabfp _{\sigma ,(\omega )}(\rr {2d})$,
if and only if $V_{\phi _2} f \in \sfW (\omega ,\ell ^\mabfp _{\sigma}
(\zz {2d}))$, and
$$
\nm {V_{\phi _1} f}{L^\mabfp _{\sigma ,(\omega )}}
\asymp 
\nm {V_{\phi _2} f}{\sfW (\omega ,\ell ^\mabfp _{\sigma})},\quad f\in
\maclS '_{1/2}(\rr d).
$$
\end{prop}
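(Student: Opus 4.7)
\medskip

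\textbf{Proof plan.} The strategy is to prove the two inequalities
$$
\nm{V_{\phi_2}f}{\sfW(\omega,\ell^{\mabfp}_{\sigma})} \lesssim \nm{V_{\phi_1}f}{L^{\mabfp}_{\sigma,(\omega)}}
\quad\text{and}\quad
\nm{V_{\phi_1}f}{L^{\mabfp}_{\sigma,(\omega)}} \lesssim \nm{V_{\phi_2}f}{\sfW(\omega,\ell^{\mabfp}_{\sigma})},
$$
exploiting three ingredients: (i) the window-independence of the $L^{\mabfp}_{\sigma,(\omega)}$ quasi-norm (Proposition \ref{propwindowindep}); (ii) the pointwise subharmonic estimate of Lemma \ref{SubharmonicEst}, available for a Gaussian window; and (iii) the elementary fact that $\sfW(\omega,\ell^{\mabfp}_{\sigma})$ embeds continuously into $L^{\mabfp}_{\sigma,(\omega)}$. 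The embedding in (iii) follows because, if $b_j=\sup_{z\in j+Q}|V_{\phi_2}f(z)|$, then $|V_{\phi_2}f|$ is dominated pointwise by the step function $\sum_j b_j\chi_{j+Q}$, and since $\omega$ is $v$-moderate with $v$ bounded on $Q$, one has $\omega(z)\asymp\omega(j)$ on $j+Q$, giving $\nm{V_{\phi_2}f}{L^{\mabfp}_{\sigma,(\omega)}}\lesssim\nm{V_{\phi_2}f}{\sfW(\omega,\ell^{\mabfp}_{\sigma})}$.

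\medskip

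\textbf{Easy direction.} Assuming $V_{\phi_2}f\in\sfW(\omega,\ell^{\mabfp}_{\sigma})$, the embedding in (iii) gives $V_{\phi_2}f\in L^{\mabfp}_{\sigma,(\omega)}$, and then Proposition \ref{propwindowindep} transfers this to $V_{\phi_1}f\in L^{\mabfp}_{\sigma,(\omega)}$ with comparable quasi-norm. This completes one half.

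\medskip

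\textbf{Hard direction, Gaussian case.} By Proposition \ref{propwindowindep} we may replace $\phi_1$ by any Gaussian $\phi\in\mascS_{1/2}(\rr d)$ without changing the $L^{\mabfp}_{\sigma,(\omega)}$ quasi-norm up to an equivalence. Set $r=\min\mabfp\in(0,\infty]$. Lemma \ref{SubharmonicEst} with $p=r$ gives, for each $z_0\in\rr{2d}$,
$$
|V_\phi f(z_0)|\le C\,\nm{V_\phi f}{L^r(B_{r_0}(z_0))}.
$$
Enlarging balls to a fixed cube $j+Q'$ containing all $B_{r_0}(z_0)$ for $z_0\in j+Q$, and absorbing the constant weight $\omega(j)$ into $\omega(z)$ on $j+Q'$ by $v$-moderacy, yields
$$
\sup_{z\in j+Q}|V_\phi f(z)|\,\omega(j)\lesssim \nm{(V_\phi f)\cdot\omega}{L^r(j+Q')}.
$$
Since $r\le p_k$ for every $k$ and $j+Q'$ has bounded measure, an iterated Hölder estimate on the cube gives $\nm{g}{L^r(j+Q')}\lesssim\nm{g}{L^{\mabfp}_\sigma(j+Q')}$. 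Finally, the cubes $\{j+Q'\}_{j\in\zz{2d}}$ have bounded overlap, so summing in $\ell^{\mabfp}_\sigma(\zz{2d})$ (with the iterated Hölder applied globally via bounded-overlap decomposition) yields
$$
\bigl\Vert\{\sup_{z\in j+Q}|V_\phi f(z)|\,\omega(j)\}_j\bigr\Vert_{\ell^{\mabfp}_\sigma} \lesssim \nm{V_\phi f}{L^{\mabfp}_{\sigma,(\omega)}},
$$
that is, $V_\phi f\in\sfW(\omega,\ell^{\mabfp}_\sigma)$ with the desired estimate.

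\medskip

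\textbf{Transfer of the Wiener bound to $\phi_2$.} It remains to pass from the Gaussian window $\phi$ to the general window $\phi_2$ on the $\sfW$-side. Here I mimic the proof of Proposition \ref{propwindowindep}, but with Proposition \ref{WienerProp}(2) playing the role of Proposition \ref{PropSemiContConvEst}. Choosing a Gabor frame generated by $\phi$ at a sufficiently fine lattice $\Lambda=\ep\zz{2d}$, the dual window $\psi$ lies in $M^1_{(\Theta_\rho v)}$, and the expansion $\phi_2=\sum_{j,k}(V_\psi\phi_2)(x_j,\xi_k)\,\pi(x_j,\xi_k)\phi$ yields a pointwise bound
$$
|V_{\phi_2}f(z)|\le \bigl(|b|*_{[\theta]}|V_\phi f|\bigr)(z),
\qquad b(x_j,\xi_k)=|V_\psi\phi_2(-\ep x_j,-\ep\xi_k)|,
$$
with $b\in\ell^1_{(v_0)}\subseteq\ell^{\mathbf r}_{\sigma,(T_\theta^*v)}$ by the choice of $\rho$. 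Applying Proposition \ref{WienerProp}(2) then gives
$$
\nm{V_{\phi_2}f}{\sfW(\omega,\ell^{\mabfp}_\sigma)} \lesssim \nm{b}{\ell^{\mathbf r}_{\sigma,(T_\theta^*v)}}\nm{V_\phi f}{\sfW(\omega,\ell^{\mabfp}_\sigma)} \lesssim \nm{V_\phi f}{\sfW(\omega,\ell^{\mabfp}_\sigma)}.
$$
Combining with the previous step, $\nm{V_{\phi_2}f}{\sfW(\omega,\ell^{\mabfp}_\sigma)}\lesssim\nm{V_{\phi_1}f}{L^{\mabfp}_{\sigma,(\omega)}}$.

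\medskip

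\textbf{Main obstacle.} The delicate point is the Hölder-plus-overlap step needed to compare the cube-local $L^r$ norms to the full mixed $L^{\mabfp}_\sigma$ norm: one must argue carefully variable-by-variable, since for $r<1$ the triangle inequality degrades and the iteration must respect the permutation $\sigma$. The transfer from Gaussian $\phi$ to general $\phi_2$ on the $\sfW$-side is the other place where one must check that the hypotheses of Proposition \ref{WienerProp}(2) are met with the appropriate weights, using the embedding $M^1_{(\Theta_\rho v)}\subseteq M^r_{(v)}$ from Remark \ref{RemThmS}.
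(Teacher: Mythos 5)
Your overall architecture is the same as the paper's: the trivial embedding $\sfW (\omega ,\ell ^{\mabfp}_\sigma )\hookrightarrow L^{\mabfp}_{\sigma ,(\omega )}$ for the easy direction, Lemma \ref{SubharmonicEst} for a Gaussian window, and a convolution argument to transfer the Wiener bound to a general window (the paper uses the continuous bound $|V_{\phi _2}f|\lesssim |V_{\phi _0}f|*|V_{\phi _2}\phi _0|$ together with Proposition \ref{WienerProp}\,(1) and \eqref{Wien1LebEst} rather than your semi-discrete Gabor-expansion variant, but both routes are viable). The genuine gap is in the Gaussian case, at precisely the step you dispatch as ``iterated H{\"o}lder plus bounded overlap.'' After Lemma \ref{SubharmonicEst} you have $\sup _{z\in j+Q}|V_\phi f(z)|\,\omega (j)\lesssim \nm F{L^r(j+Q')}$ with $F=|V_\phi f|\omega$ and $r=\min \mabfp$, and you propose to continue via $\nm F{L^r(j+Q')}\lesssim \nm F{L^{\mabfp}_\sigma (j+Q')}$ followed by summation over $j$. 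The intermediate inequality this requires, namely
$$
\Big \Vert \big \{ \nm F{L^{\mabfp}_\sigma (j+Q')}\big \} _{j}\Big \Vert _{\ell ^{\mabfp}_\sigma}
\lesssim \nm F{L^{\mabfp}_\sigma},
$$
is false for general measurable $F$, and nothing in your argument uses more than measurability of $F$ at this point. Take $2d=2$, $\mabfp =(1,\infty )$, $\sigma$ the identity, $\omega =1$ and $F=\sum _{k=1}^N\chi _{[k,k+1)}(x_1)\chi _{E_k}(x_2)$ with $E_1,\dots ,E_N$ disjoint subsets of $[0,1)$: then $\nm F{L^{1,\infty}}=1$, while the left-hand side is at least $N$, because each cube $(k,0)+Q'$ takes its own $L^\infty$ norm in $x_2$ \emph{before} the $\ell ^1$ sum in $j_1$ is performed.

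What makes the correct argument work is that the local exponent must be kept equal to $r=\min \mabfp$ in every variable that has not yet been processed; only then does Minkowski's inequality (applicable because $p_k/r\ge 1$) permit interchanging the $\ell ^{p_k}$ sum over $j_k$ with the remaining $L^r$ cube averages, while H{\"o}lder's inequality upgrades the $k$-th variable from $L^r$ over a cube to $L^{p_k}$ over $\mathbf R$. This is exactly the coordinate-by-coordinate induction on the quantities $b_k$ that occupies the bulk of the paper's proof, and it is the content your sketch would need to supply. Note also that your ``Main obstacle'' paragraph misdiagnoses the difficulty: the counterexample above has both exponents at least $1$ and $\sigma =\mathrm{id}$, so the issue is not $r<1$ or the permutation, but the direction of Minkowski's inequality when the outer discrete norm does not match the inner local norm.
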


\par

For the proof we note that
for every measurable function $F$ on $\rr {2d}$ we have
\begin{equation}\label{Wien1LebEst}
\nm {F}{\sfW ^1 (v,\ell ^r_\sigma )}\lesssim
\nm F{L^1_{\sigma ,(\Theta _\rho v)}},
\end{equation}
which follows by an application of H{\"o}lder's inequality.

\par

\begin{proof}
By the definitions it follows that
$$
\nm {V_\phi f}{L^\mabfp _{\sigma ,(\omega )}}
\lesssim
\nm {V_\phi f}{\sfW (\omega, \ell ^\mabfp _{\sigma})},
$$
when $\phi \in \maclS _{1/2}$.

\par

When proving the reversed inequality we start by considering the case
when $\phi _1=\phi _2=\phi$ is a Gaussian. First we need to introduce
some notations. We set
\begin{align*}
X &= (X_1,\dots ,X_{2d}) = (x_1,\dots ,x_d,\xi _1,\dots ,\xi _d)
\\[1ex]
Y &= (Y_1,\dots ,Y_{2d}) = (X_{\sigma (1)},\dots , X_{\sigma (2d)}),
\\[1ex]
r &= \min \mabfp
\quad \text{and}\quad
F(Y) = |V_\phi f(X)|\omega (X)
\end{align*}
For every $k\in \{ 0,\dots ,2d \}$ we also set
\begin{alignat}{2}
\mabfq _k &= (p_1,\dots ,p_{k}),&\quad
\mabfr _k &= (p_{k+1},\dots ,p_{2d}),\notag
\\[1ex]
t_k &= (Y_{k+1},\dots ,Y_{2d}), & \quad
Q_k &= [-2,2]^k,\notag
\intertext{and}
b_k(l) &\equiv \left (
\int _{l+Q_{2d-k}}\nm {F(\cdo ,t_k)}{L^{\mabfq _k}}^r\, dt_k
\right )^{1/r},
&\quad 
l &\in \zz {2d-k},\ k<2d\notag
\\[1ex]
b_{2d} & \equiv \nm {F}{L^{\mabfq _{2d}}} = \nm {V_\phi f}
{L^\mabfp _{\sigma ,(\omega )}}. & & \label{Fk2dest}
\end{alignat}

\par

We claim that for every $k\in \{ 1,\dots , 2d\}$, the inequality
$$
\nm {V_\phi f}{\sfW (\omega ,\ell ^{\mabfp}_{\sigma} )}
\lesssim \nm {b_k}{\ell ^{\mabfr _k}}
$$
holds.

\par

In fact, for $k=1$, the result follows from Lemmas \ref{Wienerdil} and
\ref{SubharmonicEst}, H{\"o}lder's inequality and the fact that $\omega$
is moderate.

\par

Assume that the result is true for $k\in \{ 1,\dots , 2d-1\}$, and prove the
result for $k+1$. For notational convenience we only prove the statement in the
case $p_0=p_{k+1}<\infty$. The case $p_{k+1}<\infty$ follows by similar arguments
and are left for the reader.

\par

Let $t=t_{k+1}$ and
$$
c_{k}(l)
=
\left ( \sum _{j\in \mathbf Z} \left ( \int _{l+Q_{2d-k-1}} \int _{j+Q_1}
\nm {F(\cdo ,z,t)}{L^{\mabfq _k}}^r\, dzdt
\right ) ^{p_0/r}\right ) ^{1/p_0}.
$$
Then 
$$
\nm {b_k(\cdo ,l)}{\ell ^{p_k+1}(\mathbf Z)} =c_{k}(l),\qquad l\in \zz {2d-k-1},
$$
giving that
\begin{equation}\label{STFTNormEst}
\nm {V_\phi f}{\sfW (\omega ,\ell ^{\mabfp}_{\sigma} )}
\lesssim
\nm {b_k}{\ell ^{\mabfr _k}(\zz {2d-k})} = \nm {c_{k}}{\ell
^{\mabfr _{k+1}} (\zz {2d-k-1})}.
\end{equation}

\par

Since $p_0\ge r$, Minkowski's and H{\"o}lder's inequalities give
\begin{multline*}
c_{k}(l)
= 
\left ( \sum _{j\in \mathbf Z} \left ( \int _{l+Q_{2d-k-1}} \int _{-2}^2
\nm {F(\cdo ,z+j,t)}{L^{\mabfq _k}}^r\, dzdt
\right ) ^{p_0/r}\right ) ^{1/p_0}
\\[1ex]
\le
\left ( \sum _{j\in \mathbf Z} \left ( \int _{l+Q_{2d-k-1}}
\left ( \int _{-2}^2 \nm {F(\cdo ,z+j,t)}{L^{\mabfq _k}}^{p_0}
\, dz\right ) ^{r/p_0}dt
\right ) ^{p_0/r}\right ) ^{1/p_0}
\\[1ex]
\le
\left ( \int _{l+Q_{2d-k-1}} \left ( \sum _{j\in \mathbf Z}
 \int _{-2}^2 \nm {F(\cdo ,z+j,t)}{L^{\mabfq _k}}^{p_0}
\, dz \right ) ^{r/p_0}dt \right ) ^{1/r}
\\[1ex]
\asymp
\left ( \int _{l+Q_{2d-k-1}} \left ( 
\int _{\mathbf R} \nm {F(\cdo ,z,t)}{L^{\mabfq _k}}^{p_0}
\, dz\right ) ^{r/p_0}dt
\right ) ^{1/r}
=
b_{k+1}(l),
\end{multline*}
and the induction step follows from these estimates,
\eqref{Fk2dest} and \eqref{STFTNormEst}. This gives the result
when $\phi$ is a Gaussian.

\par

Next assume that $\phi \in M^1_{(\Theta _\rho v)}\setminus 0$ is arbitrary,
and let $\phi _0$ be a fixed Gaussian. Then
$$
\nm {V_\phi f}{L^\mabfp _{\sigma ,(\omega )}}
\asymp
\nm {V_{\phi _0} f}{L^\mabfp _{\sigma ,(\omega )}},
$$
by Proposition \ref{propwindowindep}, and the result
follows if we prove 
\begin{equation}\label{Estagain}
\nm {V_{\phi} f}{\sfW (\omega ,\ell ^\mabfp _{\sigma})}
\lesssim
\nm {V_{\phi _0} f}{\sfW (\omega ,\ell ^\mabfp _{\sigma})}.
\end{equation}

\par

We have
$$
|V_\phi f|\lesssim |V_{\phi_0} f|*|V_\phi \phi _0|,
$$
(cf. \cite[Chapter 11]{Gc2}). An application of
Proposition \ref{WienerProp} gives
\begin{equation*}
\nm {V_\phi f}{\sfW (\omega ,\ell ^{\mabfp}_{\sigma})}
\lesssim \nm {V_\phi \phi _0}{\sfW ^1 (v,\ell ^r_\sigma )}
\nm {V_{\phi_0} f}{\sfW (\omega ,\ell ^{\mabfp}_{\sigma})}
\lesssim
\nm {V_{\phi_0} f}{L^{\mabfp}_{\sigma ,(\omega )}}.
\end{equation*}
Here the last inequality follows from \eqref{Wien1LebEst}.
This gives \eqref{Estagain}, and the result follows.
\end{proof}

\par

The next result is an immediate consequence of the previous proposition and
the fact that $\ell ^{\mabfp}_{(\omega )}$ is increasing with respect to
$\mabfp$, giving that $\sfW (\omega ,\ell ^\mabfp _{\sigma})$ increases
with $\mabfp$, when $\omega \in \mascP _E$.

\par

\begin{prop}\label{ModSpIncrease}
Let $\sigma \in \operatorname {S}_{2d}$, $\mabfp _1,
\mabfp _2\in (0,\infty ]^{2d}$
and $\omega _1,\omega _2 \in \mascP _E(\rr {2d})$ be such that
be such that $\mabfp _1\le \mabfp _2$ and $\omega _2\lesssim
\omega _1$. Then
$$
M^{\mabfp _1}_{\sigma ,(\omega _1)}(\rr d)\subseteq M^{\mabfp _2}
_{\sigma ,(\omega _2)}(\rr d),
$$
and
$$
\nm f{M^{\mabfp _2}_{\sigma ,(\omega _2)}}
\lesssim
\nm f{M^{\mabfp _1}_{\sigma ,(\omega _1)}},
\qquad f\in \Sigma _1'(\rr d).
$$
\end{prop}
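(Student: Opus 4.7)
The plan is to reduce the embedding to monotonicity of Wiener-type sequence norms via Proposition~\ref{WienerEquiv}. Since the quasi-norm on $M^{\mabfp}_{\sigma,(\omega)}$ is equivalent to a Wiener space norm of the STFT, and the Wiener norm is essentially an $\ell^{\mabfp}_{\sigma,(\omega)}$ norm of a fixed (window- and weight-dependent) non-negative sequence on $\zz{2d}$, the embedding should follow directly from the monotonicity of mixed $\ell$-spaces in $\mabfp$ and in $\omega$.

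First, since $\omega_1\in\mascP _E(\rr{2d})$, there is a submultiplicative $v\in\mascP _E(\rr{2d})$ such that $\omega_1$ is $v$-moderate; because $\omega_2\lesssim\omega_1$, the weight $\omega_2$ is then also $v$-moderate (up to a constant). Put $r=\min(1,\min\mabfp_1)$, fix $\rho>2d(1-r)/r$, and choose $\phi\in M^1_{(\Theta_\rho v)}(\rr d)\setminus 0$, e.g.\ a Gaussian. Then $\phi$ is an admissible window in Proposition~\ref{WienerEquiv} for both parameter pairs $(\mabfp_1,\omega_1)$ and $(\mabfp_2,\omega_2)$, giving
$$
\nm f{M^{\mabfp_j}_{\sigma,(\omega_j)}}
\asymp\nm{V_\phi f}{\sfW(\omega_j,\ell^{\mabfp_j}_\sigma)},\qquad j=1,2.
$$

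By Definition~\ref{DefWienderSpace}, $\nm{F}{\sfW(\omega,\ell^{\mabfp}_\sigma)}=\nm{b_{F,\omega}}{\ell^{\mabfp}_\sigma}$ with $b_{F,\omega}(j)=\nm{F}{L^\infty(j+Q)}\omega(j)$, so it suffices to prove the pointwise and sequence-space monotonicity
$$
b_{F,\omega_2}(j)\lesssim b_{F,\omega_1}(j),\qquad
\nm{a}{\ell^{\mabfp_2}_\sigma}\le\nm{a}{\ell^{\mabfp_1}_\sigma}
$$
for $j\in\zz{2d}$ and non-negative sequences $a$ on $\zz{2d}$, the first being immediate from $\omega_2\lesssim\omega_1$. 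The second is the only step requiring care: iterating the one-dimensional counting-measure embedding $\ell^p(\mathbf Z)\subseteq\ell^q(\mathbf Z)$ with $\nm{\cdot}{\ell^q}\le\nm{\cdot}{\ell^p}$ for $p\le q$, applied coordinate by coordinate in the iterated definition \eqref{bomegadef} from the innermost norm outward, yields exactly this inequality. Chaining these comparisons with Proposition~\ref{WienerEquiv} delivers the desired estimate.

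There is no serious obstacle—the author himself flags the proposition as an immediate consequence. The only subtlety worth attention is that a \emph{single} window $\phi$ must serve both Wiener identifications simultaneously, which is why $\rho$ is chosen large enough to handle the smaller of the two exponent vectors (namely $\mabfp_1$, since $\mabfp_1\le\mabfp_2$ forces $r=\min(1,\min\mabfp_1)\le\min(1,\min\mabfp_2)$).
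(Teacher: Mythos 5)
Your proposal is correct and follows essentially the same route as the paper: the author derives the proposition as an immediate consequence of Proposition \ref{WienerEquiv} together with the monotonicity of $\ell^{\mabfp}_{(\omega)}$ in $\mabfp$ and $\omega$, which gives that $\sfW(\omega,\ell^{\mabfp}_{\sigma})$ increases with $\mabfp$. Your additional care about choosing a single window $\phi$ valid for both parameter pairs (taking $\rho$ adapted to the smaller exponent vector $\mabfp_1$) is a detail the paper leaves implicit but is handled correctly.
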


\par

Next we extend the Gabor analysis in \cite{GaSa} to modulation spaces
of the form $M^\mabfp _{\sigma ,(\omega )}$, with Lebesgue exponents
and weights as before. The first
two results show that the analysis and synthesis operators posses the
requested continuity properties.

\par

\begin{prop}\label{AnSynthOpProp}
Let $\Lambda =T_\theta \zz {2d}$ for some $\theta\in \mathbf R_*^{2d}$,
$\mabfp \in (0,\infty ]^{2d}$, $0<r\le \min
(1,\mabfp )$, and let $\omega ,v\in \mascP _E(\rr {2d})$
be such that $\omega$ is $v$-moderate.
Also let $\phi ,\psi \in M^r_{(v)}(\rr d)$, and let $C_\phi$ and $D_\psi$
be as in Definition \ref{DefAnSynGabOps}. Then the following
is true:
\begin{enumerate}
\item $C_\phi$ is uniquely extendable to 
continuous map from $M^\mabfp _{\sigma ,(\omega)}(\rr d)$
to $\ell ^\mabfp _{\sigma ,(\omega)}(\Lambda )$;

\vrum

\item $D_\psi$ is uniquely extendable to 
continuous map from $\ell ^\mabfp _{\sigma ,(\omega)}(\Lambda )$
to $M^\mabfp _{\sigma ,(\omega)}(\rr d)$.
\end{enumerate}

\par

Furthermore, if $\max \mabfp <\infty$, $f\in
M^\mabfp _{\sigma ,(\omega)}(\rr d)$ and $c\in \ell
^\mabfp _{\sigma ,(\omega)}(\Lambda )$, then $C_\phi f$
and $D_\psi c$ converge unconditionally and in norms. If
instead $\max \mabfp
=\infty$, then $C_\phi f$ and $D_\psi c$ converge in the
weak$^*$ topology in $\ell ^\infty _{(\omega)}(\Lambda )$
and $M^\infty _{(\omega)}(\rr d)$, respectively.
\end{prop}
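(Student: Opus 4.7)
The plan is to reduce both (1) and (2) to the Wiener amalgam equivalence of Proposition \ref{WienerEquiv}, combined with the convolution estimates of Section \ref{sec2}. First I would pull back through $T_\theta$ via Lemma \ref{Wienerdil} so that the lattice becomes $\zz{2d}$; the scaling factors are absorbed into constants depending on $\theta$. Then I would fix a Gaussian window $\phi _0\in \Sigma _1(\rr d)$, which lies in $M^1_{(\Theta _\rho v)}(\rr d)$ for every $\rho$, so Proposition \ref{WienerEquiv} is available with $\phi _0$ as reference window. Applied with constant-exponent vector $(r,\dots ,r)$ and weight $v$, it converts the hypothesis $\phi ,\psi \in M^r_{(v)}$ into $V_{\phi _0}\phi ,\, V_{\phi _0}\psi \in \sfW (v,\ell ^r_\sigma (\zz {2d}))$, and identifies $\| f\| _{M^\mabfp _{\sigma ,(\omega )}}$ with $\| V_{\phi _0}f\| _{\sfW (\omega ,\ell ^\mabfp _\sigma )}$.

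For (1), I would use the standard pointwise reproducing estimate
\[
|V_\phi f(x,\xi )|\lesssim \bigl (|V_{\phi _0}f|*|V_{\phi _0}\phi |\bigr )(x,\xi ),
\]
valid since $\phi _0$ is a Gaussian. Because $V_{\phi _0}\phi$ is continuous, $\sfW (v,\ell ^r_\sigma )\hookrightarrow \sfW ^1(v,\ell ^r_\sigma )$, and Proposition \ref{WienerProp}(1) with $q_0=q_1=\infty$, $q_2=1$ places $|V_{\phi _0}f|*|V_{\phi _0}\phi |$ in $\sfW (\omega ,\ell ^\mabfp _\sigma )$. The required sequence-level Young inclusion $\ell ^\mabfp _{\sigma ,(\omega )}*\ell ^r_{(v)}\subseteq \ell ^\mabfp _{\sigma ,(\omega )}$ follows from the corollary of Proposition \ref{PropSemiContConvEst}: the constant vector $\mathbf r=(r,\dots ,r)$ satisfies $r_k\le \min _{m\le k}(1,p_m)$ precisely under $r\le \min (1,\mabfp )$. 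Finally, bounding the sampled value $|V_\phi f(x_j,\xi _k)|$ by the $L^\infty$-norm of $V_\phi f$ over a unit cube around $(x_j,\xi _k)$ (and using moderation of $\omega$) yields $\| C_\phi f\| _{\ell ^\mabfp _{\sigma ,(\omega )}(\Lambda )}\lesssim \| f\| _{M^\mabfp _{\sigma ,(\omega )}}$.

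For (2), the covariance of the STFT under phase-space translations furnishes the pointwise estimate
\[
|V_{\phi _0}(D_\psi c)(x,\xi )|\le \bigl (|c|*_{[\theta ]}|V_{\phi _0}\psi |\bigr )(x,\xi ).
\]
Proposition \ref{WienerProp}(2) with $\mabfp _1=\mabfp$, $\omega _1=\omega$, $\mabfp _2=(r,\dots ,r)$, $\omega _2=v$ and output $(\mabfp ,\omega )$ then delivers $|c|*_{[\theta ]}|V_{\phi _0}\psi |\in \sfW (\omega ,\ell ^\mabfp _\sigma )$, using the same sequence-convolution inclusion. Proposition \ref{WienerEquiv} transfers the conclusion back to $D_\psi c\in M^\mabfp _{\sigma ,(\omega )}$ with the required continuity.

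For the convergence claims, when $\max \mabfp <\infty$ the set $\ell _0(\Lambda )$ is norm-dense in $\ell ^\mabfp _{\sigma ,(\omega )}(\Lambda )$, so partial sums of $D_\psi c$ are Cauchy in $M^\mabfp _{\sigma ,(\omega )}$ by the continuity from (2), giving unconditional norm convergence, and the analogous density argument handles $C_\phi f$. When $\max \mabfp =\infty$, I would use the duality $M^\infty _{(\omega )}\cong (M^1_{(1/\omega )})^*$ from Proposition \ref{p1.4B}(3) together with the established norm bound to obtain weak$^*$ convergence. The main technical difficulty will be the careful bookkeeping required to match the constant exponent $r$ and the fixed permutation $\sigma$ through the vector-valued Young-type estimates of Section \ref{sec2}, particularly when some $p_j=\infty$; this is where the hypothesis $0<r\le \min (1,\mabfp )$ is exactly what is needed.
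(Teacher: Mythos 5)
Your proposal is correct and follows essentially the same route as the paper: reduce everything to Wiener amalgam norms via Proposition \ref{WienerEquiv}, control the sampled/synthesized short-time Fourier transforms by the pointwise convolution estimates $|V_\phi f|\lesssim |V_{\phi_0}f|*|V_{\phi_0}\phi|$ and $|V_{\phi_0}(D_\psi c)|\le |c|*_{[\theta]}|V_{\phi_0}\psi|$, and invoke Proposition \ref{WienerProp} together with the discrete Young-type corollary of Proposition \ref{PropSemiContConvEst} with the constant exponent vector $(r,\dots,r)$, finishing the convergence claims by density of $\ell_0$ and $\Sigma_1$ (norm density when $\max\mabfp<\infty$, weak$^*$ otherwise). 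The only cosmetic difference is that for the analysis operator you partially re-derive the content of Proposition \ref{WienerEquiv} from the reproducing estimate, whereas the paper cites it directly.
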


\par

\begin{proof}
We shall mainly follow the proofs of Theorems 3.5 and 3.6 in \cite{GaSa}.
It suffices to prove the desired norm estimates
\begin{equation}\label{DesNormEsts}
\nm {C_\phi f}{\ell ^\mabfp _{\sigma ,(\omega)}(\Lambda )}
\lesssim
\nm f{M^\mabfp _{\sigma ,(\omega)}}
\quad \text{and}\quad
\nm {D_\psi c}{M^\mabfp _{\sigma ,(\omega)}}
\lesssim
\nm {c}{\ell ^\mabfp _{\sigma ,(\omega)}(\Lambda )},
\end{equation}
when $f\in \Sigma _1(\rr d)$ and $c\in \ell _0(\Lambda )$.

\par

In fact, if $\max \mabfp <\infty$, then the result
follows from \eqref{DesNormEsts} and the fact that
$\Sigma _1$ and $\ell _0$ are dense in 
$M^\mabfp
_{\sigma ,(\omega)}$ and $\ell ^\mabfp _{\sigma ,(\omega)}$,
respectively. If instead
$\max \mabfp =\infty$, then the result follows
from the facts that both $M^\mabfp
_{\sigma ,(\omega)}$ and $\ell ^\mabfp _{\sigma ,(\omega)}$
increase with $\mabfp$, and that $\Sigma _1$ and
$\ell _0$ are dense in $M^\infty _{(\omega)}$
and $\ell ^\infty _{(\omega)}$, respectively, with respect to the
weak$^*$-topologies.

\par

In order to prove the first inequality in \eqref{DesNormEsts},
let $\Lambda =\{ (x_j,\xi _k) \} _{j,k\in J}$ as before.
Then
$$
C_\phi f = \{ V_\phi f(x_j,\xi _k) \} _{j,k \in J},
$$
and Propositions \ref{propwindowindep} and \ref{WienerEquiv} gives
\begin{equation*}
\nm {C_\phi f}{\ell ^\mabfp _{\sigma ,(\omega)}(\Lambda )}
\lesssim \nm {V_\phi f}{\sfW (\omega ,\ell ^\mabfp _{\sigma})}
\asymp
\nm {V_\phi f}{L^\mabfp _{\sigma ,(\omega )}}
\asymp \nm f{M^\mabfp _{\sigma ,(\omega)}}.
\end{equation*}
This gives the first estimate in \eqref{DesNormEsts}.

\par

For the second estimate in \eqref{DesNormEsts},
let $\phi _0\in \Sigma _1$ be fixed. Then
\begin{multline*}
|V_{\phi _0}(D_\psi c)(x,\xi )|
=
\left | \sum _{(j,k\in J}c(x_j,\xi _k)V_{\phi _0}
\left ( e^{i\scal {\cdo}{\xi _k}} \psi (\cdo -x_j)\right )(x,\xi ) \right |
\\[1ex]
\le (b *_{[\theta ]}|V_{\phi _0}\psi |)(x,\xi ),
\end{multline*}
where $b \equiv T^*_\theta |c|$ is a sequence on $\zz {2d}$.
Hence, by letting $\mabfp _0=\mabfp _1=\mabfp$ and
$\mabfp _2=\mabfr$ in (2) in Proposition \ref{WienerProp},
Propositions \ref{PropSemiContConvEst} gives
\begin{multline*}
\nm {D_\psi c}{M^\mabfp _{\sigma ,(\omega)}}
\asymp
\nm {b *_{[\theta ]}|V_{\phi _0}\psi |}
{L^\mabfp _{\sigma ,(\omega)}}
\lesssim
\nm {b *_{[\theta ]}|V_{\phi _0}\psi |}
{\sfW (\omega ,\ell ^\mabfp _{\sigma})}
\\[1ex]
\lesssim
\nm b{\ell ^\mabfp _{\sigma ,(\omega \circ T_\theta )}(\zz {2d})}
\nm {V_{\phi _0}\psi}{\sfW (v,\ell ^\mabfr _{\sigma})}
\asymp
\nm c{\ell ^\mabfp _{\sigma ,(\omega )}(\Lambda )}
\nm {V_{\phi _0}\psi}{L^\mabfr _{\sigma ,(v)}}
\\[1ex]
\asymp
\nm c{\ell ^\mabfp _{\sigma ,(\omega )}(\Lambda )}
\nm {\psi}{M^\mabfr _{\sigma ,(v)}},
\end{multline*}
and the result follows.
\end{proof}

\par

As a consequence of the last proposition we get the following.

\par

\begin{thm}\label{ModGabThm}
Let $\Lambda = T_\theta \zz {2d}=\{ (x_j,\xi _k )\} _{j,k\in J}$, where $\theta \in
\mathbf R_*^{2d}$, $\mabfp ,\mabfr \in
(0,\infty ]^{2d}$, $\sigma \in \operatorname
{S}_{2d}$, and let $\omega ,v\in \mascP _E(\rr {2d})$ be the same as in
Proposition \ref{AnSynthOpProp}. Also let $\phi ,\psi \in
M^{\mabfr}_{(v)}(\rr d)$ be such that
\begin{equation}\label{DualFrames}
\{ e^{i\scal {\cdo }{\xi _k}}\phi (\cdo -x_j) \} _{j,k\in J}
\quad \text{and}\quad
\{ e^{i\scal {\cdo }{\xi _k}}\psi (\cdo -x_j) \} _{j,k\in J}
\end{equation}
are dual frames to each others. Then the following is true:
\begin{enumerate}
\item The operators $S_{\phi ,\psi} \equiv D_\psi \circ C_\phi$ and
$S_{\psi ,\phi} \equiv D_\phi \circ C_\psi$ are both the identity map
on $M^\mabfp _{\sigma ,(\omega )}(\rr d)$, and
\begin{align}
f &= \sum _{j,k\in J} (V_\phi f)(x_j,\xi _k)
e^{i\scal {\cdo }{\xi _k}}\psi (\cdo -x_j)\notag
\\[1ex]
&=
\sum _{j,k\in \zz d} (V_\psi f)(x_j,\xi _k)
e^{i\scal {\cdo }{\xi _k}}\phi (\cdo -x_j),\label{GabExpForm}
\end{align}
with unconditional norm-convergence in $M^\mabfp _{\sigma ,(\omega )}$
when $\max \mabfp <\infty$, and with convergence in
$M^\infty _{(\omega)}$ with respect to the weak$^*$ topology otherwise;

\vrum

\item
$
\displaystyle{\nm f{M^\mabfp _{\sigma ,(\omega )}}
\asymp
\nm {(V_\phi f)\circ T_{\theta }}
{\ell ^\mabfp _{\sigma ,(\omega \circ T_{\theta  })} }
\asymp
\nm {(V_\psi f)\circ T_{\theta }}
{\ell ^\mabfp _{\sigma ,(\omega \circ T_{\theta  })}} }.
$
\end{enumerate}
\end{thm}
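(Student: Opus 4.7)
The strategy is to derive everything from Proposition \ref{AnSynthOpProp}, combined with the standard dual frame identity. First I would verify that the hypothesis $\phi,\psi\in M^{\mabfr}_{(v)}(\rr d)$ places the windows inside $M^r_{(v)}(\rr d)$ for some scalar $r\le \min(1,\min \mabfp)$; by Proposition \ref{ModSpIncrease} this reduces to choosing $r$ with $(r,\dots,r)\le \mabfr$ componentwise. Proposition \ref{AnSynthOpProp} then yields continuous operators
\begin{equation*}
C_\phi,\,C_\psi\colon M^{\mabfp}_{\sigma,(\omega)}(\rr d)\to \ell^{\mabfp}_{\sigma,(\omega)}(\Lambda),\qquad D_\phi,\,D_\psi\colon \ell^{\mabfp}_{\sigma,(\omega)}(\Lambda)\to M^{\mabfp}_{\sigma,(\omega)}(\rr d),
\end{equation*}
so the compositions $S_{\phi,\psi}=D_\psi C_\phi$ and $S_{\psi,\phi}=D_\phi C_\psi$ are bounded endomorphisms of $M^{\mabfp}_{\sigma,(\omega)}(\rr d)$.

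To prove assertion (1), I would show that these compositions coincide with the identity. The dual frame condition \eqref{DualFrames} gives $S_{\phi,\psi}=S_{\psi,\phi}=\mathrm{Id}$ on $L^2(\rr d)$, and in particular on $\Sigma_1(\rr d)$. When $\max\mabfp<\infty$, $\Sigma_1$ is norm-dense in $M^{\mabfp}_{\sigma,(\omega)}$ by Proposition \ref{p1.4A} and the density remark preceding Section~\ref{subsec1.2}, so continuity upgrades this identity to all of $M^{\mabfp}_{\sigma,(\omega)}$. When $\max\mabfp=\infty$ the norm-density fails, and I would instead appeal to the classical Gabor reconstruction recalled after Proposition \ref{ThmS}: for any $f$ in the larger weighted space $M^\infty_{(1/v_0)}\supseteq M^{\mabfp}_{\sigma,(\omega)}$, the series $D_\psi C_\phi f$ converges to $f$ in the weak$^*$ topology of $\Sigma_1'$. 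Since the element delivered by Proposition \ref{AnSynthOpProp} already lies in $M^{\mabfp}_{\sigma,(\omega)}\subseteq \Sigma_1'$, the two distributions coincide and $f=D_\psi C_\phi f$ holds inside $M^{\mabfp}_{\sigma,(\omega)}$. The unconditional norm convergence (when $\max\mabfp<\infty$) and the weak$^*$ convergence in $M^\infty_{(\omega)}$ (otherwise) of the series \eqref{GabExpForm} then come directly from Proposition \ref{AnSynthOpProp}.

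For (2), I would use the sandwich
\begin{equation*}
\nm f{M^{\mabfp}_{\sigma,(\omega)}}=\nm{D_\psi C_\phi f}{M^{\mabfp}_{\sigma,(\omega)}}\lesssim \nm{C_\phi f}{\ell^{\mabfp}_{\sigma,(\omega)}(\Lambda)}\lesssim \nm f{M^{\mabfp}_{\sigma,(\omega)}},
\end{equation*}
where the lower bound comes from (1) together with the continuity of $D_\psi$, and the upper bound from the continuity of $C_\phi$. A relabeling $j\mapsto T_\theta j$ of the lattice rewrites $\nm{C_\phi f}{\ell^{\mabfp}_{\sigma,(\omega)}(\Lambda)}$ as $\nm{(V_\phi f)\circ T_\theta}{\ell^{\mabfp}_{\sigma,(\omega\circ T_\theta)}}$, and exchanging the roles of $\phi$ and $\psi$ delivers the parallel equivalence for $V_\psi f$. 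The main obstacle I anticipate is the weak$^*$ case of (1): one must reconcile two a priori distinct weak$^*$ topologies, namely the one on $\Sigma_1'$ in which the classical expansion converges and the one on $M^\infty_{(\omega)}$ claimed in the statement. This is bridged by the continuous embedding $M^\infty_{(\omega)}\hookrightarrow \Sigma_1'$ from Proposition \ref{p1.4A} together with the observation that $D_\psi C_\phi f\in M^{\mabfp}_{\sigma,(\omega)}$ is automatic from Proposition \ref{AnSynthOpProp}, so the weak$^*$ limit in the larger space is forced to lie in the smaller one.
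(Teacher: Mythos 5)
Your proposal is correct and follows essentially the same route as the paper: the paper likewise reduces the identity $S_{\phi ,\psi}=\operatorname{Id}$ to a known classical case (Corollary 12.2.6 in Gr{\"o}chenig's book, applied in $M^\infty _{(\omega )}$, into which every $M^{\mabfp}_{\sigma ,(\omega )}$ embeds by Proposition \ref{ModSpIncrease}) and then combines the continuity in Proposition \ref{AnSynthOpProp} with the density of $\Sigma _1$ and $\ell _0$ when $\max \mabfp <\infty$. One slip to fix: to place $\phi ,\psi \in M^{\mabfr}_{(v)}$ inside $M^r_{(v)}$ for a scalar $r\le \min (1,\min \mabfp )$ you need $\mabfr \le (r,\dots ,r)$, not $(r,\dots ,r)\le \mabfr$, since modulation spaces \emph{increase} with the exponent; in other words the implicit hypothesis being used is $\max \mabfr \le \min (1,\min \mabfp )$.
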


\par

\begin{proof}
By Corollary 12.2.6 in \cite{Gc2}, the result follows in the case
$M^{\mabfp}_{\sigma (\omega )} = M^\infty _{(\omega )}$.
Since $M^{\mabfp}_{\sigma (\omega )}$ increases with $\mabfp$,
the identity \eqref{GabExpForm} holds for any $f\in
M^{\mabfp}_{\sigma (\omega )}$. The result now follows from
Proposition \ref{AnSynthOpProp} and the facts that $\ell _0$
and $\Sigma _1$ are dense in $\ell ^{\mabfp}_{\sigma ,(\omega )}$
and $L^{\mabfp}_{\sigma ,(\omega )}$, respectively, when
$\max \mabfp <\infty$.
\end{proof}

\par

We shall end the section by applying the latter results to deduce
invariance properties of compactly supported elements in
$M^{p,q}_{(\omega )}$ and in $W^{p,q}_{(\omega )}$. The space
$W^{p,q}_{(\omega )}(\rr d)$, with $p,q\in (0,\infty ]$ and $\omega
\in \mascP _E(\rr {2d})$, is the Wiener amalgam related space, defined
as the set of all $f\in \maclS _{1/2}'(\rr d)$ such that
$$
\nm f{W^{p,q}_{(\omega )}}\equiv \nm {V_\phi f \cdot \omega}{L^{p,q}_*}
$$
is finite. Here $L^{p,q}_*(\rr {2d})$ is the set of all measurable
$F$ on $\rr {2d}$ such that
$$
\nm F{L^{p,q}_*}\equiv \nm {f_0}{L^p}<\infty,\quad
\text{where} \quad
f_0(x) = \nm {F(x,\cdo )}{L^q}.
$$
Evidently, $M^{\mabfp}_{\sigma ,(\omega )}=W^{p,q}_{(\omega )}$ for suitable
$\mabfp \in (0,\infty ]^{2d}$ and $\sigma \in \operatorname S_{2d}$.

\par

As a consequence of  Remark 4.6 in \cite{RSTT} and its arguments,
it follows that
\begin{equation}\label{ModCompSuppInv}
\begin{aligned}
M^{p_1,q}_{(\omega )}(\rr d) \bigcap \maclE '_t(\rr d)
&= W^{p_2,q}_{(\omega )}(\rr d) \bigcap \maclE '_t(\rr d)
\\[1ex]
&= \mathscr F L^{q}_{(\omega )}(\rr d) \bigcap \maclE '_t(\rr d)
\end{aligned}
\end{equation}
when $p_1,p_2,q\in [1,\infty ]$ and $t>1$. Here $\maclE '_t(\rr d)$
is the set of compactly supported elements in $\maclS _t'(\rr d)$,
and for any $q\in (0,\infty ]$ and $\omega \in
\mascP _E(\rr {2d})$, the set $\mascF L^q_{(\omega )}(\rr d)$
consists of all $f\in \Sigma _1'(\rr d)$ such that $\widehat f$ is
measurable and belongs to $L^q_{(\omega )}(\rr d)$. We set
$$
\nm f{\mascF L^q_{(\omega )}}
= \nm f{\mascF L^q_{x,(\omega )}}
\equiv
\nm {\widehat f \cdot \omega (x,\cdo )}{L^q}.
$$
Note here that if $x\in \rr d$ is fixed, then
$$
\nm {\widehat f \cdot \omega (x,\cdo )}{L^q}
\asymp
\nm {\widehat f \cdot \omega (0,\cdo )}{L^q},
$$
since $\omega$ is $v$-moderate for some $v$. Consequently,
the condition $\nm f{\mascF L^q_{x,(\omega )}}<\infty$
is independent of $x\in \rr d$, though the norm
$\nm f{\mascF L^q_{x,(\omega )}}$ might depend on $x$.

\par

We have now the following extension of \cite[Remark 4.6]{RSTT}.

\par

\begin{prop}\label{CompSuppInv}
Let $\omega \in \mascP _E(\rr {2d})$, $p,q\in (0,\infty ]$ and $t>1$.
Then \eqref{ModCompSuppInv} holds. In particular,
$$
M^{p,q}_{(\omega )}(\rr d) \bigcap \maclE '_t(\rr d)
\quad \text{and}\quad
W^{p,q}_{(\omega )}(\rr d) \bigcap \maclE '_t(\rr d)
$$
are independent of $p$.
\end{prop}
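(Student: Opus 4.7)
The plan is to reduce the claim to a single discrete model via the Wiener amalgam characterization of Proposition \ref{WienerEquiv} with a Gaussian window, exploiting the Gaussian decay of $V_\phi f$ in $x$ that stems from the compact support of $f$.

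Choose $\phi \in \Sigma_1(\rr d)$ to be a Gaussian. Since $\Sigma_1 \subset M^1_{(\Theta_\rho v)}(\rr d)$ for every $\rho>0$ and every $v \in \mascP_E(\rr{2d})$, $\phi$ is a valid window for all spaces appearing in the proposition (by Propositions \ref{propwindowindep}--\ref{WienerEquiv}). For $f \in \maclE'_t(\rr d)$ with $\supp f \subset K$, the formula $V_\phi f(x, \xi) = (2\pi)^{-d/2}\widehat{f \cdot \overline{\phi(\cdot - x)}}(\xi)$ combined with the Gaussian decay of $\phi$ and compact support of $f$ yields
\[
|V_\phi f(x, \xi)| \lesssim h(\xi)\, e^{-c|x|^2}
\]
for some $c > 0$ and $h$ of at most sub-exponential growth in $\xi$ (dictated by the order of $f$ in $\maclS_t'$). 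Simultaneously, $V_\phi f(x, \cdot) = c\,\widehat f * (e^{-ix\cdot}\widehat \phi)$ gives $|V_\phi f(x, \xi)| \leq c\,(|\widehat f| * |\widehat \phi|)(\xi)$ uniformly in $x$, and at $x = 0$ we have $V_\phi f(0, \cdot) = c\,\widehat f * \widehat \phi$.

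By Proposition \ref{WienerEquiv}, both $\nm{f}{M^{p,q}_{(\omega)}}$ and $\nm{f}{W^{p,q}_{(\omega)}}$ are equivalent to weighted sequence-space norms of
\[
\tilde b(j, k) := \nm{V_\phi f}{L^\infty((j, k) + [0, 1]^{2d})},
\]
differing only by the underlying permutation $\sigma$. The Gaussian decay in $x$ gives $\tilde b(j, k) \leq A(j)B(k)$ with $A \in \ell^r(\zz d)$ for every $r \in (0, \infty]$ and $B(k) \asymp \tilde b(0, k)\omega(0, k)$, while $v$-moderateness of $\omega$ absorbs the shift between $\omega(j, k)$ and $\omega(0, k)$. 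Consequently
\[
\nm{f}{M^{p, q}_{(\omega)}} < \infty \ \Longleftrightarrow\ \nm{f}{W^{p, q}_{(\omega)}} < \infty \ \Longleftrightarrow\ B \in \ell^q(\zz d),
\]
independently of $p \in (0, \infty]$; the permutation $\sigma$ affects only the finite $p$-dependent constants arising in the $\ell^p$-summation over $j$. To close the loop with $\mathscr F L^q_{(\omega)}$: the upper bound $\tilde b(0, k) \lesssim \nm{|\widehat f|*|\widehat \phi|}{L^\infty(k + [0, 1]^d)}$ reduces $B \in \ell^q$ to $|\widehat f|*|\widehat \phi| \in \sfW^\infty(\omega(0, \cdot), \ell^q)$, which in turn follows from $\widehat f \in L^q_{(\omega)} = \sfW^q(\omega(0, \cdot), \ell^q)$ by the amalgam convolution estimate of Proposition \ref{WienerProp} applied with the Schwartz function $|\widehat \phi|$ in the $\sfW^1(v, \ell^1)$-class. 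The reverse implication $B \in \ell^q \Rightarrow \widehat f \in L^q_{(\omega)}$ follows from the lower bound $\tilde b(0, k) \geq |(\widehat f * \widehat \phi)(k_0)|$ for any $k_0 \in k + [0, 1]^d$, the standard STFT reconstruction identity expressing $\widehat f$ as an integral operator applied to $V_\phi f$ with a Gaussian kernel, and another application of Proposition \ref{WienerProp}.

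The main obstacle is the quasi-Banach range $p, q \in (0, 1)$, where classical Young's and Minkowski's inequalities fail. The remedy is routing all estimates through Proposition \ref{WienerEquiv}, which converts $L^p$-norms into $\ell^p$-norms of rapidly decaying sequences (automatically finite for every $p$), and Proposition \ref{WienerProp}, which supplies amalgam convolution estimates valid across $(0, \infty]$. The hypothesis $t > 1$ ensures $\Sigma_1(\rr d) \subset \maclS_t(\rr d)$, so $\phi \in \Sigma_1$ acts legitimately on $\maclE'_t \subset \maclS_t'$.
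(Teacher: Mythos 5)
The central step of your argument --- the factorization $\tilde b(j,k)\le A(j)B(k)$ with $A\in\ell^r$ for every $r$ and $B(k)\asymp \tilde b(0,k)\omega (0,k)$ --- is not justified, and as stated it is false. The estimate $|V_\phi f(x,\xi )|\lesssim h(\xi )e^{-c|x|^2}$ only produces a crude majorant $h$ coming from the ultradistribution order of $f$; it says nothing about the $\xi$-profile of $V_\phi f(x,\cdo )$ being comparable, for the various $x$ in a neighbourhood of $\supp f$, to the single reference column $\tilde b(0,\cdo )$. For a chirp $f(y)=\chi (y)e^{i\lambda y_1^2}$ supported on a set of diameter larger than $1$, the peak of $\xi \mapsto |V_\phi f(x,\xi )|$ sits near $\xi _1=2\lambda x_1$ and moves with $x$, so no inequality $\tilde b(j,k)\omega (j,k)\le A(j)\tilde b(0,k)\omega (0,k)$ with $A$ summable can hold with constants uniform in $f$ (and if $\supp f$ is far from the unit cube, $j=0$ is simply the wrong reference). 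Consequently your upper bound $\nm {A}{\ell ^p}\nm {B}{\ell ^q}$ and your lower bound (the $j=0$ column) do not match, and the asserted chain of equivalences collapses. The paper avoids this issue entirely by taking the analysis window $\phi$ \emph{compactly supported} in a Gevrey class $\maclD _{s_0}$, $1<s_0<s<t$: then the Gabor coefficients $c(j,k)=V_\phi f(x_j,\xi _k)$ vanish for all but \emph{finitely many} translation indices $j$, so all $\ell ^{p}$-quasi-norms over $j$, in either order of summation, are trivially equivalent; this yields simultaneously the $p$-independence and the identification $M^{p_1,q}_{(\omega )}\cap \maclE _t'=W^{p_2,q}_{(\omega )}\cap \maclE _t'$. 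That device (or a genuine substitute, e.g.{\ }a reproducing-formula argument dominating $\tilde b(j,\cdo )$ by a discrete convolution of $\max _{j'\in J_0}\tilde b(j',\cdo )$ against a Gaussian, followed by the $\ell ^q$-convolution estimates for $q<1$) is what is missing from your proof.

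There is a second gap in the identification with $\mathscr FL^{q}_{(\omega )}$ when $q<1$. Your forward direction invokes Proposition \ref{WienerProp}, but part (1) of that proposition requires $L^{q_1}*L^{q_2}\subseteq L^{q_0}$ with $q_0\ge 1$, which is unavailable for small $q$, and passing from $\widehat f\in L^q_{(\omega )}$ to control of sup-type amalgam norms needs a Plancherel--P\'olya inequality for $\widehat f$ that you do not establish; your reverse direction appeals to an integral reconstruction formula, which is exactly the kind of tool that fails in the quasi-Banach range. The paper instead replaces $f$ by $e^{i\scal {\cdo}{\eta}}f$, notes that with its choice of window the only surviving Gabor coefficients are $c_\eta (0,k)=\widehat f(\xi _k-\eta )$, and integrates the $q$-th power of the resulting $\ell ^q$-quasi-norm over $\eta$ in a fundamental domain $Q$; since the translates $\xi _k+Q$ tile $\rr d$, this produces $\nm {\widehat f}{L^q_{(\omega )}}^q$ exactly, with no convolution inequality or inversion formula needed.
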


\par

We need the following lemma for the proof. Here the first part
follows from \cite[Proposition 4.2]{CPRT05}.

\par

\begin{lemma}\label{LemmaSTFTCompSupp}
Let $1<s<t$ and let $f\in \maclE _t'(\rr d)$.
Then the following is true:
\begin{enumerate}
\item if $\phi \in \maclS _s(\rr d)$, then
$$
|V_\phi f(x,\xi )| \lesssim e^{-h|x|^{1/t}}e^{\ep |\xi| ^{1/t}},
$$
for every $h>0$ and $\ep >0$.

\vrum

\item $|\widehat f(\xi )|\lesssim e^{\ep |\xi| ^{1/t}}$, for every $\ep >0$.
\end{enumerate}
\end{lemma}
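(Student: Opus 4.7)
The plan is to take part (1) as given from \cite[Proposition 4.2]{CPRT05}, as the paper explicitly notes, and to deduce part (2) from it by integrating the short-time Fourier transform over the spatial variable. The underlying inversion-type identity that powers the argument is
\begin{equation*}
\int _{\rr d}V_\phi f(x,\xi )\, dx =\overline c\, \widehat f(\xi ),
\qquad c=\int _{\rr d}\phi (y)\, dy,
\end{equation*}
valid for any admissible window with $c\neq 0$. Once this identity is established, substituting the decay estimate from part~(1) into the integrand immediately yields the desired growth bound on $\widehat f$.

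Concretely, I would fix a Gaussian $\phi \in \maclS _{1/2}(\rr d)\subseteq \maclS _s(\rr d)$, which has $c\neq 0$. By part (1), for each $\ep >0$ there is a constant $C_\ep >0$ with
\begin{equation*}
|V_\phi f(x,\xi )|\le C_\ep e^{-|x|^{1/t}}e^{\ep |\xi |^{1/t}},
\qquad (x,\xi )\in \rr {2d},
\end{equation*}
by choosing $h=1$. Since $\int _{\rr d}e^{-|x|^{1/t}}\, dx <\infty $, integrating the identity above in $x$ and plugging in this bound then yields $|\widehat f(\xi )|\lesssim e^{\ep |\xi |^{1/t}}$, which is precisely (2).

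The single step requiring care is the justification of the displayed identity in the ultra-distributional setting. Since $f\in \maclE _t'(\rr d)$ is compactly supported and $t>1$ lies in a non-quasi-analytic Gevrey class, one can pick a cutoff $\chi \in \maclS _t(\rr d)\cap C_c^\infty (\rr d)$ with $\chi =1$ on a neighbourhood of $\supp f$. Writing $f=\chi f$ reduces the STFT to the Gelfand--Shilov pairing
\begin{equation*}
V_\phi f(x,\xi )=(2\pi )^{-d/2}\scal f{\chi (\cdo )\, \overline{\phi (\cdo -x)}\, e^{-i\scal \cdo \xi }},
\end{equation*}
where the test function belongs to $\maclS _t(\rr d)$. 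The $x$-decay from part~(1) allows the $x$-integral to be exchanged with the pairing against $f$ (a Bochner-integral argument in $\maclS _{t,h}$ for suitable $h$); the inner integral collapses to $\overline c\, \chi (y)e^{-i\scal y\xi }$, and pairing this with $f$ produces $\overline c\widehat f(\xi )$. This bookkeeping in the Gelfand--Shilov calculus is the main obstacle; no further analytic input is needed.
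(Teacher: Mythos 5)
Your proposal is correct and follows essentially the same route as the paper: part (1) is quoted from \cite[Proposition 4.2]{CPRT05}, and part (2) is obtained from the identity $\int V_\phi f(x,\xi)\,dx=\overline{c}\,\widehat f(\xi)$ (the paper simply normalizes $\int\phi\,dx=1$) together with the $L^1_x$ bound supplied by (1). Your extra care in justifying the interchange of the $x$-integral with the ultra-distributional pairing is a welcome addition but not a departure from the paper's argument.
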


\par

\begin{proof}
The first part follows from \cite[Proposition 4.2]{CPRT05}.

\par

By choosing $\phi$ in (1) such that $\int \phi \, dx =1$, we get
$$
|\widehat f(\xi )| = \left | \int  V_\phi f(x,\xi )\, dx \right |
\le \nm {V_\phi f(\cdo ,\xi )}{L^1}\lesssim e^{\ep |\xi| ^{1/t}},
$$
where the last estimate follows from (1).
\end{proof}

\par

\begin{proof}[Proof of Proposition \ref{CompSuppInv}]
We use the same notations as in the proofs of Proposition
\ref{AnSynthOpProp} and Theorem \ref{ModGabThm}.
First assume that $\omega \ge 1/v_s$ for some $s$
satisfying $1<s<t$, where
$$
v_s(x,\xi ) = e^{r(|x|^{1/s} + |\xi |^{1/s})}
$$
and $r>0$ is fixed. Let $\phi ,\psi \in \cap _{r>0}M^r_{(v_s)}(\rr d)$
and $\{(x_j,\xi _j)\}$ be such that \eqref{DualFrames} are dual
Gabor frames, and such that $\phi$ has compact support.
Such frames exists in view of Proposition \ref{ThmS}, and the fact
that $\maclD _{s_0}$ is non-trivial and contained in $\maclS _{s_0}
\subseteq M^1_{(v_s)}$ when $1<s_0<s$. Here the latter inclusion
follows from \cite[Theorem 3.9]{Toft8}. For conveniency we
assume that $0\in J$ and $x_0=\xi _0=0$.

\par

By Theorem \ref{ModGabThm} it follows that any
$f\in \maclS _t'\subseteq M^{p,q}_{(1/v_s)}$ possess the
expansions \eqref{GabExpForm}, and that
\begin{equation}\label{EstCoeff}
\nm f{M^{p,q}_{(\omega )}}\asymp \nm {c}{\ell ^{p,q}_{(\omega )}}
=\nm {c}{\ell ^{p,q}_{(\omega )}(\Lambda )}, \qquad c=\{ c(j,k) \} _{j,k\in J},
\end{equation}
where $c(j,k)=(V_\phi f)(x_j,\xi _k)$. Furthermore, if 
$\ell ^{p,q}_{*,(\omega )}$ is the set of all $b=\{ b(j,k)\} _{j,k\in J}$
such that
$$
\nm {b_0}{\ell ^p}<\infty ,\qquad b_0(j) =
\nm {b(j,\cdo )\omega (j,\cdo )}{\ell ^q},
$$
then
$$
\nm f{W^{p,q}_{(\omega )}}\asymp
\nm {c}{\ell ^{p,q}_{*,(\omega )}}
$$

\par

Now assume that in addition $f\in \maclE _t '(\Omega )$, for
some bounded and open set $\Omega \subseteq \rr d$.
Since both $f$ and $\phi$ has compact supports, it follows
that there is a finite set $J_0\subseteq J$ such that $c_{j,k}=0$
when $j\in J\setminus J_0$. This implies that
$$
\nm c{\ell ^{p_1,q}_{(\omega )}} \asymp
\nm c{\ell ^{p_2,q}_{*,(\omega )}},
$$
for every $p_1,p_2\in (0,\infty ]$, and the first equality in
\eqref{ModCompSuppInv} follows in this case.

\par

Next let $\omega \in \mascP _E(\rr {2d})$ be general. Since
$\maclE _t'(\rr d)\subseteq M^{p,q}_{(1/v_s)}$, it follows that
\begin{equation}\label{MCapComp}
M^{p,q}_{(\omega )}\bigcap \maclE _t'
= \left ( M^{p,q}_{(\omega )}\bigcap M^{p,q}_{(1/v_s)} \right )
\bigcap \maclE _t' 
= M^{p,q}_{(\omega +1/v_s)}\bigcap \maclE _t' .
\end{equation}
In the same way it follows that
$$
W^{p,q}_{(\omega )}\bigcap \maclE _t'
= W^{p,q}_{(\omega +1/v_s)}\bigcap \maclE _t' .
$$
The first equality in \eqref{ModCompSuppInv} now follows from
these identities, the first part of the proof and the
fact that
$$
1/v_s \le \omega +1/v_s \in \mascP _E(\rr {2d}).
$$

\par

In order to prove the last equality in \eqref{ModCompSuppInv}
we again start to consider the case when
$\omega \ge 1/v_s$ for some $s$ satisfying $1<s<t$.
Let $f\in \maclE _t'$, and choose $\phi$ and
$\psi$ here above such that $\phi =1$ on $\supp f$, $\psi =1$
on $\supp \phi$ and such that $\phi (\cdo -x_j)=0$ on $\supp f$
when $x_j\neq 0$. This is possible in view of Section 3 in \cite{JPTT}.
Also let $Q$ be a closed parallelepiped such that
$$
\bigcup _{k\in J}(\xi _k+Q) =\rr d
$$
and that the intersection of two different $\xi _k+Q$ is a zero set.

\par

Then there is a constant $C>0$ such that
\begin{equation}\label{ModfEquiv}
C^{-1}\nm {f}{M^{p,q}_{(\omega )}}
\le
\nm {e^{i\scal \cdo {\eta }}f}{M^{p,q}_{(\omega )}}
\le
C\nm {f}{M^{p,q}_{(\omega )}},\qquad \eta \in Q.
\end{equation}
Furthermore, by the support properties of $\phi$ and $f$, and
using the fact that the Gabor coefficients $c_\eta (j,k)$ of 
$e^{i\scal \cdo {\eta }}f$ are given by
$$
c_\eta (j,k) = (V_\phi f)(x_j,\xi _k-\eta ),
$$
are zero when $x_j\neq 0$, and 
\begin{equation}\label{cCase0}
c_\eta (0,k) = (V_\phi f)(\xi _k-\eta ) = \widehat f(\xi _k-\eta ).
\end{equation}
Hence, \eqref{ModfEquiv} gives
$$
\nm {f}{M^{p,q}_{(\omega )}} ^q\asymp
\nm {c_\eta}{\ell ^{p,q}_{(\vartheta )}} ^q =
\nm {c_\eta (0,\cdo )\vartheta (0,\cdo )}{\ell ^q}^q.
$$

\par

By integrating the last relations with respect to $\eta$ over $Q$
it follows from \eqref{cCase0} that
$$
\nm {f}{M^{p,q}_{(\omega )}} ^q\asymp 
\nm {\{ \nm {\widehat f}{L^q_{(\omega)}(\xi _k +Q)} \}
_{k\in J}}{\ell ^q} ^q
=
\nm {\widehat f}{L^q_{(\omega)}(\rr d)}^q,
$$
and last equality in \eqref{ModCompSuppInv} follows in this case.

\par

Next assume that $\omega$ is arbitrary, and let $1<s<t$. By Lemma
\ref{LemmaSTFTCompSupp} we have
$$
\mathscr F L^{q}_{(\omega )} \bigcap \maclE '_t
=
\left ( \mathscr F L^{q}_{(\omega )}\bigcap
\mathscr F L^{q}_{(1/v_s )} \right )
\bigcap \maclE '_t
= 
\mathscr F L^{q}_{(\omega +1/v_s)} \bigcap \maclE '_t .
$$
The last equality in \eqref{ModCompSuppInv} now follows from
these identities, the previous case and \eqref{MCapComp}.
The proof is complete.
\end{proof}

\par

We finish the section by applying the previous result on compactly
supported symbols to pseudo-differential operators. (See
Sections 1 and 4 in \cite{Toft11} for strict definitions.) Let
$t\in \mathbf R$, $p\in (0,\infty ]$ and $\omega _1,\omega _2
\in \mascP _E(\rr {2d})$. Then the set $s_{t,p}(\omega _1,\omega _2)$
consists of all $a\in \Sigma _1'(\rr {2d})$ such that the operator
$\op _t(a)$ from $\Sigma _1(\rr d)$ to $\Sigma _1'(\rr d)$ extends
(uniquely) to a Schatten-von Neumann operator from
$M^{2,2}_{(\omega _1)}(\rr d)$ to $M^{2,2}_{(\omega _2)}(\rr d)$. The
following result follows immediately from Theorem A.3 in \cite{Toft11}
and Proposition \ref{CompSuppInv}.

\par

\begin{prop}\label{CompSchatten}
Let $\omega _1,\omega _2\in \mascP _E(\rr {2d})$ and
$\omega _0\in \mascP _E(\rr {4d})$ be such that
$$
\frac {\omega _2(x,\xi )}{\omega _1(y,\eta )}
\asymp
\omega _0((1-t)x+ty,t\xi +(1-t)\eta ,\xi -\eta ,y-x).
$$
Also let $s>1$, $p\in (0,\infty ]$ and $q\in [1,\infty ]$. Then
\begin{multline*}
s_{t,q}(\omega _1,\omega _2) \bigcap \maclE _s'(\rr {2d})
= M^{p,q}_{(\omega _0)}(\rr {2d})\bigcap \maclE _s'(\rr {2d})
\\[1ex]
= \mathscr FL^{q}_{(\omega _0)}(\rr {2d})\bigcap \maclE _s'(\rr {2d}).
\end{multline*}
\end{prop}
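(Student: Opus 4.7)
The plan is to combine Theorem A.3 in \cite{Toft11} with Proposition \ref{CompSuppInv} (applied to distributions in $\rr{2d}$ in place of $\rr d$) and exploit the invariance of $M^{p,q}_{(\omega _0)}\cap \maclE _s'$ with respect to the first Lebesgue exponent. The compatibility condition on the weights, relating $\omega _1$, $\omega _2$ on phase space to a single weight $\omega _0$ on the doubled phase space, is exactly the standard requirement for the $t$-Weyl calculus to identify a Schatten class of $\op _t(a)$ with a modulation space membership of the symbol $a$.

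First I would invoke Theorem A.3 in \cite{Toft11}, which under the given weight condition identifies
\[
s_{t,q}(\omega _1,\omega _2) = M^{q,q}_{(\omega _0)}(\rr {2d}),
\]
so that after intersecting with $\maclE _s'(\rr {2d})$ we obtain the first candidate identity
\[
s_{t,q}(\omega _1,\omega _2) \bigcap \maclE _s'(\rr {2d})
= M^{q,q}_{(\omega _0)}(\rr {2d}) \bigcap \maclE _s'(\rr {2d}).
\]
Next I would apply Proposition \ref{CompSuppInv}, with $d$ replaced by $2d$, to the weight $\omega _0\in \mascP _E(\rr {4d})$ and to $s>1$. The conclusion of that proposition, namely
\[
M^{p_1,q}_{(\omega _0)}(\rr {2d}) \bigcap \maclE _s'(\rr {2d})
= \mascF L^{q}_{(\omega _0)}(\rr {2d}) \bigcap \maclE _s'(\rr {2d}),
\]
is valid for any $p_1\in (0,\infty ]$, and in particular for $p_1=p$ and for $p_1=q$. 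Chaining these identifications yields the required triple equality.

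The only subtlety I anticipate is checking that Theorem A.3 in \cite{Toft11} applies verbatim in the present quasi-Banach and general-exponent setup: the quoted theorem is usually formulated under $q\ge 1$, which matches the hypothesis $q\in [1,\infty ]$ of the proposition, so no quasi-Banach extension of Schatten theory is required here. One must also verify that $\omega _0$, as furnished by the compatibility relation with $\omega _1$ and $\omega _2$, genuinely lies in $\mascP _E(\rr {4d})$; this is immediate since $\omega _1^{-1}\otimes \omega _2$ is moderate and the affine pullback in the statement preserves membership in $\mascP _E$. Granted these, the result follows simply by composing the two identifications, with no further estimates needed.
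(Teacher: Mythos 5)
Your proposal is correct and takes essentially the same route as the paper, whose entire proof is the single remark that the result ``follows immediately from Theorem A.3 in \cite{Toft11} and Proposition \ref{CompSuppInv}.'' The only caveat is that the cited Theorem A.3 is more plausibly a two-sided sandwich of $s_{t,q}(\omega _1,\omega _2)$ between modulation spaces $M^{p_1,q}_{(\omega _0)}$ and $M^{p_2,q}_{(\omega _0)}$ with different first exponents, rather than the exact identity $s_{t,q}(\omega _1,\omega _2)=M^{q,q}_{(\omega _0)}(\rr {2d})$ you assert; since you then invoke the $p$-independence of $M^{p,q}_{(\omega _0)}\bigcap \maclE _s'$ from Proposition \ref{CompSuppInv} anyway, this does not affect the argument.
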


\par

\begin{rem}
Propositions \ref{CompSuppInv} and \ref{CompSchatten} remains
true if $\maclE '_t$ are replaced by compactly supported elements
in $\Sigma _t'$, for $t>1$, or by elements in $\mascE '$.
We leave the modifications to the reader.
\end{rem}

\medspace

\end{document}